\theoremstyle{plain}
\newtheorem{thm}{Theorem}[section]
\newtheorem{thmA}{Theorem}
\newtheorem*{thm*}{Theorem}
\newtheorem{lm}[thm]{Lemma}
\newtheorem{cor}[thm]{Corollary}
\newtheorem*{cor*}{Corollary}
\newtheorem{prop}[thm]{Proposition}
\newtheorem*{conj*}{Conjecture}
\theoremstyle{remark}
\newtheorem*{remark}{Remark}
\newtheorem*{thank}{Acknowledgments}
\theoremstyle{definition}
\newtheorem*{defn*}{Definition}
\newtheorem{Remark}[thm]{Remark}
\newtheorem{defn}[thm]{Definition}
\newcommand{\nc}{\newcommand}
\newcommand{\beq}{\begin{equation}}
\newcommand{\eeq}{\end{equation}}
\newcommand{\bpmx}{\begin{pmatrix}}
\newcommand{\epmx}{\end{pmatrix}}
\newcommand{\bbmx}{\begin{bmatrix}}
\newcommand{\ebmx}{\end{bmatrix}}
\newcommand{\wh}{\widehat}
\newcommand{\wtd}{\widetilde}
\newcommand{\beqcd}[1]{\begin{equation*}\label{#1}\tag{#1}}
\newcommand{\eeqcd}{\end{equation*}}
\numberwithin{equation}{section}
\newenvironment{mylist}{
  \begin{enumerate}{}{%
      \setlength{\itemsep}{5pt} \setlength{\parsep}{0in}
      \setlength{\parskip}{0in} \setlength{\topsep}{0in}
      \setlength{\partopsep}{0in}
      \setlength{\leftmargin}{0.17in}}}{\end{enumerate}}
\def\parref#1{\ref{#1}}
\def\thmref#1{Theorem~\parref{#1}}
\def\propref#1{Prop.~\parref{#1}}
\def\corref#1{Cor.~\parref{#1}}     \def\remref#1{Remark~\parref{#1}}
\def\secref#1{\S\parref{#1}}
\def\lmref#1{Lemma~\parref{#1}}
\def\subsecref#1{\S\parref{#1}}
\def\makeop#1{\expandafter\def\csname#1\endcsname
  {\mathop{\rm #1}\nolimits}\ignorespaces}
\def\Spec{\mathrm{Spec}\,}
\DeclareMathAlphabet{\mathpzc}{OT1}{pzc}{m}{it}
\DeclareSymbolFont{cyrletters}{OT2}{wncyr}{m}{n}
\DeclareMathSymbol{\SHA}{\mathalpha}{cyrletters}{"58}
\def\makebb#1{\expandafter\def
  \csname bb#1\endcsname{{\mathbb{#1}}}\ignorespaces}
\def\makebf#1{\expandafter\def\csname bf#1\endcsname{{\bf
      #1}}\ignorespaces}
\def\makegr#1{\expandafter\def
  \csname gr#1\endcsname{{\mathfrak{#1}}}\ignorespaces}
\def\makescr#1{\expandafter\def
  \csname scr#1\endcsname{{\EuScript{#1}}}\ignorespaces}
\def\makecal#1{\expandafter\def\csname cal#1\endcsname{{\mathcal
      #1}}\ignorespaces}
\def\doLetters#1{#1A #1B #1C #1D #1E #1F #1G #1H #1I #1J #1K #1L #1M
                 #1N #1O #1P #1Q #1R #1S #1T #1U #1V #1W #1X #1Y #1Z}
\def\doletters#1{#1a #1b #1c #1d #1e #1f #1g #1h #1i #1j #1k #1l #1m
                 #1n #1o #1p #1q #1r #1s #1t #1u #1v #1w #1x #1y #1z}
\def\Gm{{\bbG}_{m}}
\def\abs#1{\left|#1\right|}
\def\Fpbar{\bar{\mathbb F}_p}
\def\Qbarp{\C_p}
\def\Qp{\Q_p}
\def\Qbar{\bar\Q}
\def\Zbar{\bar{\Z}}
\def\Zbarp{\Zbar_p}
\def\Zp{\Z_p}
\def\rmN{{\mathrm N}}
\def\cA{{\mathcal A}}  
\def\cD{\mathcal D}
\def\cE{{\mathcal E}}
\def\cF{{\mathcal F}}  
\def\cI{\mathcal I}
\def\cK{{\mathcal K}}  
\def\cM{\mathcal M}
\def\cR{{\mathcal R}}
\def\cO{\mathcal O}
\def\cS{{\mathcal S}}
\def\cf{{\mathcal f}}
\def\cW{{\mathcal W}}
\def\cZ{\mathcal Z}
\def\cC{\mathcal C}
\def\cT{\mathcal T}
\def\cU{\mathcal U}
\def\EucA{{\EuScript A}}
\def\EucE{{\EuScript E}}
\def\EucL{{\EuScript L}}
\def\EucO{{\EuScript O}}
\def\bfc{\mathbf c}
\def\bfM{\mathbf M}
\def\bda{\mathbf a}
\def\bff{\mathbf f}
\def\bdh{\mathbf h}
\def\bfi{\mathbf i}
\def\bdw{\mathbf w}
\def\bdc{\mathbf c}
\def\bfdelta{\boldsymbol{\delta}}
\def\bftheta{\boldsymbol{\theta}}
\def\sL{\mathscr L}
\def\sO{\mathscr O}
\def\sS{\mathscr S}
\def\bbI{\mathbb I}
\newcommand{\Z}{\mathbf Z}
\newcommand{\Q}{\mathbf Q}
\newcommand{\R}{\mathbf R}
\newcommand{\C}{\mathbf C}
\newcommand{\A}{\mathbf A}    
\def\bbE{{\mathbb E}}
\def\bbmu{\boldsymbol{\mu}}
\def\fraka{{\mathfrak a}}
\def\frakb{{\mathfrak b}}
\def\frakc{{\mathfrak c}}
\def\frakq{\mathfrak q}
\def\frakm{\mathfrak m}
\def\frakl{\mathfrak l}
\def\frakF{{\mathfrak F}}
\def\frakE{\mathfrak E}
\def\frakD{\mathfrak D}
\def\frakC{{\mathfrak C}}
\def\frakX{\mathfrak X}
\def\frakN{\mathfrak N}
\def\il{\mathfrak i\frakl}
\def\bfone{{\mathbf 1}}
\def\ulA{\ul{A}}
\def\ulz{\ul{z}}
\def\ollam{\bar{\lam}}
\def\Zhat{\hat{\Z}}
\def\wbar{\bar{w}}
\def\wbar{\bar{w}}
\def\zbar{\bar{z}}
\def\ab{abelian variety }
\def\etale{{\'{e}tale }}
\def\padic{\text{$p$-adic }}
\def\BS{Bruhat-Schwartz }
\def\Neron{N\'{e}ron }
\newcommand{\<}{\langle}   
\renewcommand{\>}{\rangle} 
\def\isoto{\stackrel{\sim}{\to}}
\def\imply{\Rightarrow}
\def\ot{\otimes}
\def\hookto{\hookrightarrow}
\def\longto{\longrightarrow}
\def\ol{\overline}  \nc{\opp}{\mathrm{opp}} \nc{\ul}{\underline}
\newcommand{\pair}[2]{\< #1, #2\>}
\newcommand{\pairing}{\pair{\,}{\,}}
\def\XYmatrix{\xymatrix@M=8pt} 
\def\ncmd{\newcommand}
\ncmd{\xysubset}[1][r]{\ar@<-2.5pt>@{^(-}[#1]\ar@<2.5pt>@{_(-}[#1]}
\ncmd{\XYmatrixc}[1]{\vcenter{\XYmatrix{#1}}}
\ncmd{\xyto}[1][r]{\ar@{->}[#1]}
\ncmd{\xyinj}[1][r]{\ar@{^(->}[#1]}
\ncmd{\xysurj}[1][r]{\ar@{->>}[#1]}
\ncmd{\xyline}[1][r]{\ar@{-}[#1]}
\ncmd{\xydotsto}[1][r]{\ar@{.>}[#1]}
\ncmd{\xydots}[1][r]{\ar@{.}[#1]}
\ncmd{\xyleadsto}[1][r]{\ar@{~>}[#1]}
\ncmd{\xyeq}[1][r]{\ar@{=}[#1]} \ncmd{\xyequal}[1][r]{\ar@{=}[#1]}
\ncmd{\xyequals}[1][r]{\ar@{=}[#1]}
\ncmd{\xymapsto}[1][r]{l\ar@{|->}[#1]}\ncmd{\xyimplies}[1][r]{\ar@{=>}[#1]}
\ncmd{\xyiso}{\ar[r]_-{\sim}}
\def\injxy{\ar@{^(->}}
\newcommand{\MX}[4]{\begin{bmatrix}
{#1}& {#2}\\
{#3}&{#4}\end{bmatrix} }
\newcommand{\seesaw}[4]{{#1}\ar@{-}[rd]\ar@{-}[d]&{#2}\ar@{-}[d]\\
{#3}\ar@{-}[ru]&{#4}}
\def\ie{i.e. }
\def\cf{\mbox{{\it cf.} }}
\def\can{{can}}
\def\ENS{\mathfrak E\mathfrak n\mathfrak s}
\def\SCH{\mathfrak S\mathfrak c\mathfrak h}
\def\ch{{\mathbb I}}
\def\uf{\varpi} 
\def\Abs{{|\!\cdot\!|}} 
\def\Sg{{\varSigma}}  
\def\Sgbar{\Sg^c}
\def\ndivides{\nmid}
\def\ndivide{\nmid}
\def\x{{\times}}
\def\onehalf{{\frac{1}{2}}}
\def\al{\alpha}
\def\om{\omega}
\def\dirlim{\varinjlim}
\def\prolim{\varprojlim}
\def\iso{\simeq}
\def\con{\equiv}
\def\bksl{\backslash}
\newcommand\stt[1]{\left\{#1\right\}}
\def\ep{\epsilon}
\def\lam{\lambda}
\def\pii{\pi i}
\def\sg{\sigma}
\def\vp{\varphi}
\def\disjoint{\bigsqcup}
\def\bigot{\bigotimes}
\def\smid{\,|}
\def\dx{d^\x}
\def\AFf{\A_{\cF,f}}
\def\AKf{\A_{\cK,f}}
\def\AF{\A_\cF}
\def\AK{\A_\cK}
\def\setp{{(p)}}
\def\bbox{{(\Box)}}
\newcommand{\powerseries}[1]{\llbracket{#1}\rrbracket}
\newcommand{\formal}[1]{\widehat{#1}}
\renewcommand\pmod[1]{\,(\mbox{mod }{#1})}
\renewcommand\Re{\text{Re}\,}
\newcommand\Dmd[1]{\left<{#1}\right>} 
\def\vphi{\varphi}
\def\Cp{\C_p}
\def\alg{\mathrm{alg}} 
\def\OF{O}
\def\OK{R}
\def\adelef{\A_{\cF,f}}
\def\Section{\phi_{\ads,s,v}}
\def\OFv{O_v}
\def\OKv{R_v}
\def\uf{\varpi}
\def\wbar{\bar{w}}
\def\cmpt{\varsigma}
\def\cmptv{\varsigma_v}
\def\Csplit{\frakF}
\def\holES{\bbE^h_{\ads}}
\def\Cl{Cl}
\def\ENS{SETS}
\def\SCH{SCH}
\def\frakE{\mathfrak E}
\def\bR{\mathfrak O}
\def\baseR{\cW}
\def\frakN{N}
\def\ab{{\fraka,\frakb}}
\def\Gm{\mathbb G_m}
\def\wbar{\ol{w}}
\def\addchar{\psi}
\def\lp{\eta_p}
\def\ads{\lam}
\def\Beth{b}
\def\Eadsu{\bbE^h_{\ads,u}}
\def\smid{\,|}
\def\#{\sharp}
\title[The $\mu$-invariant of anticyclotomic $p$-adic $L$-functions]{
On the $\mu$-invariant of anticyclotomic $p$-adic $L$-functions
	for CM fields}
\author[M.-L. Hsieh]{Ming-Lun Hsieh}
\address{ Department of Mathematics~\\National Taiwan University ~ \\
No. 1, Sec. 4, Roosevelt Road, Taipei 10617, Taiwan~
}
\email{mlhsieh@math.ntu.edu.tw}
\date{August 12, 2012}
\subjclass[2010]{11F67 11R23}
\thanks{The author is partially supported by National Science Council grant 100-2115-M-002-012-}
\begin{document}
\begin{abstract}
In this article, we follow Hida's approach to study the $\mu$-invariant of the anticyclotomic projection of \padic Hecke $L$-function for CM fields along a branch character. We prove a conjecture of Gillard on the vanishing of the $\mu$-invariant and give a $\mu$-invariant formula for self-dual branch characters.
\end{abstract}
\maketitle
\tableofcontents
\def\ZZbox{\Z_{(\Box)\,}}
\def\cAbox{\cA^{(\Box)}_{K,\bdc}}
\def\cAboxn{\cA^{(\Box)}_{K,\bdc,n}}
\def\Zhatbox{\widehat{\Z}^{(\Box)}}
\def\qchKF{\tau_{\cK/\cF}}
\def\opcpt{K}
\def\sh{Sh}
\def\opn{K^n}
\def\lp{j}
\def\lpp{\eta^{(p)}}
\def\lsgN{\opcpt_1^n}
\def\Om{\boldsymbol{\omega}}
\def\wt{k}
\def\skewhf{\vartheta}
\def\Fv{F}
\def\Kv{E}
\def\nh{{n.h}}
\def\hatads{\wh\ads}
\def\f{\bff}
\def\OFp{\OF_p}
\newcommand\class[1]{{\left[#1\right]}}

\def\Ig{I}
\def\UF{{\delta_v}}
\def\bR{\EucO}
\def\CLKF{\Cl_-}
\def\holESc{\bbE^h_{\ads,\plideal}}
\def\HypNV{
\begin{enumerate}
\item[\rm{(R)}] The global root number $W(\nads)=1$, where $\nads:=\adsx\Abs^{-\onehalf}_{\AK}$,
\item[\rm{(L)}] $\mu_p(\adsx_v)=0$ for every inert $v|\frakC^-$.
\end{enumerate}
}
\def\HypMu{
\begin{enumerate}
\item[\rm{(R)}] the global root number $W(\nads)=1$, where $\nads:=\adsx\Abs^{-\onehalf}_{\AK}$.
\end{enumerate}
}
\def\Op{O_p}
\def\torsbgp{\cU_p}
\def\sO{\cO}
\def\Katzd{\theta}
\def\padicL{\EucL}
\def\Glv{Z(\frakC)}
\def\adsx{\chi}
\def\nads{\adsx^*}
\def\Imu{\mu_{\adsx,\Sg}}
\section*{Introduction}
The purpose of this article is to study the vanishing of Iwasawa $\mu$-invariant of anticyclotomic \padic Hecke $L$-functions for CM fields. To state our main result precisely, let us begin with some notation.
Let $p>2$ be an odd rational prime. Let $\cF$ be a totally real field of degree $d$ over $\Q$ and $\cK$ be a totally imaginary quadratic extension of $\cF$. Let $D_\cF$ be the discriminant of $\cF$. Fix two embeddings $\iota_\infty\colon\Qbar\to\C$ and $\iota_p\colon\Qbar\to\Qbarp$ once and for all. Let $c$ denote the complex conjugation on $\C$ which induces the unique non-trivial element of $\Gal(\cK/\cF)$. We assume the following hypothesis throughout this article:
\beqcd{ord}\text{Every prime of $\cF$ above $p$ splits in $\cK$.}\eeqcd
Fix a $p$-ordinary CM type $\Sg$, namely $\Sg$ is a CM type of $\cK$ such that \padic places induced by elements in $\Sg$ via $\iota_p$ are disjoint from those induced by elements in $\Sg c$. The existence of such a CM type $\Sg$ is assured by our assumption \eqref{ord}.
We recall some properties of \padic $L$-functions for CM fields. As in \cite{Katz:p_adic_L-function_CM_fields}, to a \Neron differential on an abelian scheme $\EucA_{/\Zbar}$ of CM type $(\cK,\Sg)$ we can attach the complex CM period $\Omega_\infty=(\Omega_{\infty,\sg})_{\sg}\in(\C^\x)^\Sg$ and the \padic CM period $\Omega_p=(\Omega_{p,\sg})_{\sg}\in(\Zbarp^\x)^\Sg$ .
Let $\frakC$ be a prime-to-$p$ integral ideal of $\cK$ and decompose $\frakC=\frakC^+\frakC^-$, where $\frakC^+$ (reps. $\frakC^-$) is a product of split prime factors (resp. ramified or inert prime factors) over $\cF$. Let $Z(\frakC)$ be the ray class group of $\cK$ modulo $\frakC p^\infty$. In \cite{Katz:p_adic_L-function_CM_fields} and \cite{HidaTilouine:KatzPadicL_ASENS}, a $\Zbarp$-valued \padic measure $\EucL_{\frakC,\Sg}$ on $Z(\frakC)$ is constructed such that
\begin{align*}\frac{1}{\Omega_p^{k\Sg+2\kappa}}\cdot\int_{Z(\frakC)}\wh\ads d\,\EucL_{\frakC,\Sg}=&L^{(p\frakC)}(0,\ads)\cdot Eul_p(\ads)Eul_{\frakC^+}(\ads)\\
&\times \frac{\pi^{\kappa}\Gamma_\Sg(k\Sg+\kappa)}{\sqrt{\abs{D_\cF}_\R}(\Im \skewhf)^\kappa\cdot \Omega_\infty^{k\Sg+2\kappa}}\cdot[\cO_\cK^\x:\cO_\cF^\x],
\end{align*}
where (i) $\ads$ is a Hecke character modulo $\frakC p^\infty$ of infinity type $k\Sg+\kappa(1-c)$ with either $k\geq 1$ and $\kappa\in\Z_{\geq 0}[\Sg]$ or $k\leq 1$ and $k\Sg+\kappa\in\Z_{>0}[\Sg]$, and $\wh\ads$ is the \padic avatar of $\ads$ regarded as a \padic Galois character via geometrically normalized reciprocity law, (ii)
$Eul_p(\ads)$ and $Eul_{\frakC^+}(\ads)$ are certain modified Euler factors (See \eqref{E:modifiedEuler.V}), (iii) $\skewhf$ is a well-chosen element in $\cK$ such that $c(\skewhf)=-\skewhf$.

We fix a Hecke character $\adsx$ of infinity type $k\Sg$ with $k\geq 1$ and suppose that
\[\text{$\frakC$ is the prime-to-$p$ conductor $\adsx$}.\] Let $\Gamma^-$ be the maximal $\Zp$-free quotient of the anticyclotomic quotient $Z(\frakC)^-$ of $Z(\frakC)$. Let $\EucL_{\adsx,\Sg}^-$ be the \padic measure on $\Gamma^-$ obtained by the pull-back of $\EucL_{\frakC,\Sg}$ along $\chi$. In other words, for every locally constant function $\phi$ on $\Gamma^-$, we have
\[\int_{\Gamma^-}\phi d\EucL_{\chi,\Sg}^-=\int_{Z(\frakC)}\phi \wh\chi d\,\EucL_{\frakC,\Sg}.\]
We call $\EucL^-_{\adsx,\Sg}$ the anticyclotomic \padic $L$-function with the branch character $\adsx$. Let $v_p$ be the valuation of $\Qbarp$ normalized so that $v_p(p)=1$.
Recall that the $\mu$-invariant $\mu(\vp)$ of a $\Zbarp$-valued \padic measure $\vp$ on a \padic group $H$ is defined to be
\[\mu(\vp)=\inf_{U\subset H\text{ open }} v_p(\vp(U)).\]
Let $\Imu^-:=\mu(\EucL^-_{\adsx,\Sg})$ be the $\mu$-invariant of $\EucL^-_{\adsx,\Sg}$. On the other hand, for each $v|\frakC^-$, we define the local invariant $\mu_p(\adsx_v)$ by
\[\mu_p(\adsx_v):=\inf_{x\in {\cK_v^\x}}v_p(\adsx_v(x)-1).\]
One of our main results in this paper is to give an exact formula of $\Imu^-$ when the Hecke character $\adsx$ is \emph{self-dual} in terms of the local invariants $\mu_p(\adsx_v)$ attached to $\adsx$. Recall that we say $\adsx$ is self-dual if $\adsx|_{\AF^\x}=\qchKF\Abs_{\AF}$, where $\qchKF$ is the quadratic character associated to $\cK/\cF$. It is not difficult to see that $\mu_p(\adsx_v)$ agrees with the one defined in \cite{Finis:vmu}\footnote{Self-dual characters are called \emph{anticyclotomic} therein.} when $\adsx$ is self-dual.

We remark that an important class of self-dual characters are those associated to CM abelian varieties over
totally real fields (\cf \cite[20.15]{Shimura:ABV-with-CM}). Our first result is the determination of $\Imu^-$ if $\adsx$ is self-dual.
\begin{thmA}\label{T:A.V}
Suppose that $p\ndivide D_{\cF}$. Let $\adsx$ be a self-dual Hecke character of $\cK^\x$ such that
\HypMu
Then we have
\[\Imu^-=\sum_{v|\frakC^-}\mu_p(\adsx_v).\]
\end{thmA}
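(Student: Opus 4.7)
The plan is to establish $\Imu^- = \sum_{v\smid\frakC^-}\mu_p(\adsx_v)$ by matching upper and lower bounds, following Hida's strategy: realise $\EucL_{\adsx,\Sg}^-$ via the Katz--Hida--Tilouine Eisenstein measure pulled back by $\adsx$ and pushed forward to $\Gamma^-$, then reduce the $\mu$-invariant computation to the nonvanishing modulo $p$ of a single Fourier coefficient of an auxiliary form on the Hilbert--Blumenthal Shimura variety, together with a Zariski density statement for Galois orbits of CM points on the Hilbert--Blumenthal Igusa tower. Under the hypothesis $p\nmid D_\cF$ the \padic CM period $\Omega_p$ is a \padic unit, so for every open $U\subset\Gamma^-$ the valuation $v_p(\EucL_{\adsx,\Sg}^-(U))$ is governed by a finite sum of CM values of an Eisenstein (or theta-like) series whose Fourier coefficients factor as a product of local integrals of the form $\int\adsx_v(x)\,d\nu_v(x)$ over the finite places $v$.

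For the lower bound $\Imu^-\geq\sum_{v\smid\frakC^-}\mu_p(\adsx_v)$, at each inert or ramified $v\smid\frakC^-$ the local integral above is divisible by $p^{\mu_p(\adsx_v)}$: by the very definition of $\mu_p(\adsx_v)$ all values $\adsx_v(x)-1$ lie in this ideal, and the integral of the constant function $1$ against a unit measure is \padically trivial, so only the deviation of $\adsx_v$ from $1$ survives. At the split places $v\smid\frakC^+$ and at $v\smid p$ the corresponding modified Euler factors $Eul_{\frakC^+}(\adsx)$ and $Eul_p(\adsx)$ are \padic units thanks to \eqref{ord} and the ordinarity of $\Sg$. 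Multiplying these contributions across all finite places produces the claimed divisibility for $v_p(\EucL_{\adsx,\Sg}^-(U))$ uniformly in $U$, hence for the infimum $\Imu^-$.

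For the matching upper bound, write $\EucL_{\adsx,\Sg}^-=p^{\sum_v\mu_p(\adsx_v)}\cdot\wtd\EucL$ and seek an open $U\subset\Gamma^-$ with $\wtd\EucL(U)\in\Zbarp^\x$. The strategy is to choose at each $v\smid\frakC^-$ a test section realising the infimum in the definition of $\mu_p(\adsx_v)$, so that after dividing by $p^{\mu_p(\adsx_v)}$ the local integral becomes a \padic unit. Feeding these sections into the construction yields a mod-$p$ Hilbert modular form whose Fourier coefficient at a well-chosen cusp is a \padic unit. By the $q$-expansion principle this form is nonzero modulo $p$, and Hida's theorem on the Zariski density of Galois orbits of CM points on the Igusa tower transfers this nonvanishing from the cusp to a CM point sitting in a translate of a suitable open $U\subset\Gamma^-$, producing $\wtd\EucL(U)\in\Zbarp^\x$.

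The hard part is the upper bound. The density/nonvanishing step requires the auxiliary form to remain nonzero mod $p$ on the whole Igusa tower, which is precisely where the self-duality and the assumption (R) that $W(\nads)=1$ enter: the functional equation of the self-dual $L$-function would otherwise force the central Fourier coefficient to vanish identically, collapsing the interpolation. A second, subtler difficulty is the simultaneous realisation of the local divisibilities as exact equalities at all bad $v\smid\frakC^-$; this demands a coordinated choice of test sections compatible with the global Eisenstein construction, and it is this local-global matching, together with Hida's density theorem for the Hilbert--Blumenthal Igusa tower, that carries the technical weight of the argument.
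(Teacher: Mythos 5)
Your overall scaffolding — realising $\EucL^-_{\adsx,\Sg}$ via the Katz--Hida--Tilouine Eisenstein measure, reducing the $\mu$-invariant to $p$-adic valuations of Fourier coefficients through Hida's linear-independence theorem and the $q$-expansion principle, then matching upper and lower bounds — is the right skeleton, and it is essentially the paper's (the reduction is \thmref{T:1.V}, and the lower bound does come from $v_p(A_\beta(\adsx_v))\geq\mu_p(\adsx_v)$ as in \propref{P:6.Theta}). But your upper bound argument has a genuine gap at precisely the point where the paper's new contribution lives.

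You correctly observe that you must find a \emph{single} $\beta\in\cF_+$ for which the Fourier coefficient $\bfa_\beta(\Eadsu,\frakc(\fraka))$ attains the valuation $\sum_{v\smid\frakC^-}\mu_p(\adsx_v)$, and you flag ``simultaneous realisation of the local divisibilities as exact equalities'' as ``a subtler difficulty'' carrying ``the technical weight.'' But you do not say how to resolve it, and the mechanism you offer for the use of (R) — that $W(\nads)=1$ prevents the central Fourier coefficient from ``vanishing identically'' by the functional equation — is not what (R) actually does here. The obstruction is local-global: at each $v\smid\frakC^-$ there is a local $\Beth_v$ with $v_p(A_{\Beth_v}(\adsx_v))=\mu_p(\adsx_v)$, but to produce a global $\beta\in\cF_+$ approximating each $\Beth_v$ (so that $A_\beta(\adsx_v)=A_{\Beth_v}(\adsx_v)$ for all $v\smid\frakC^-$) while simultaneously lying in $\cO^\x_{\cF,(p\Csplit\Csplit^c)}$ and fitting the $\frakc$-polarisation constraint, one needs the signs $\tau_{\cK_v/\cF_v}(\Beth_v)$ to be globally consistent. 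This is where the paper's key \lmref{L:key.V} enters: by identifying $A_\beta(\adsx_v)$ with the Whittaker integral of a Siegel--Weil section in the degenerate principal series of $U(1,1)$ and invoking the epsilon dichotomy of Harris, the nonvanishing of $A_\beta(\adsx_v)$ \emph{forces} $W(\nads_v)\tau_{\cK_v/\cF_v}(\beta)=\nads_v(2\skewhf)$. Taking the product over all $v\smid\frakC^-$ and using the product formula together with $W(\nads)=1$ is exactly what makes the local data $\{\Beth_v\}$ globally realisable (\propref{P:NVAbeta.V}). Without this dichotomy your construction of the unit Fourier coefficient is not justified: there is no a priori reason that the locally optimal $\Beth_v$'s patch to a global $\beta$, and in the opposite root-number case they provably do not. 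This is the step that needs a proof, not merely a flag.

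A secondary, smaller issue: what you call ``Hida's theorem on the Zariski density of Galois orbits of CM points on the Igusa tower'' is more accurately Hida's theorem on the \emph{linear independence modulo $p$} of modular forms translated by a finite set $\bfD$ of Serre--Tate automorphisms; its applicability here also requires the injectivity statement of \lmref{L:2.V}, which you do not address.
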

If the branch character $\adsx$ is not self-dual, we do not get the precise formula of $\Imu^-$, but we can still offer the following criterion on the vanishing of $\Imu^-$ at least when $\adsx$ is not residually self-dual (\corref{C:2.V}).
\begin{thmA}\label{T:B.V}Suppose that $p\ndivide  D_{\cF}$ and that
\begin{itemize}
\item[(L)] $\mu_p(\adsx_v)=0$ for every $v|\frakC^-$,
\item[(N)]$\adsx$ is not residually self-dual, namely $\wh{\adsx}_+\not\con \qchKF\om_\cF\pmod{\frakm}$.
\end{itemize}
Then $\Imu^-=0$.
\end{thmA}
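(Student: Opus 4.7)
The plan is to show $\Imu^- = 0$ by realizing $\EucL^-_{\adsx,\Sg}$ as the pullback to $\Gamma^-$ of (a twist of) the Katz--Hida--Tilouine Eisenstein measure and then exhibiting one $q$-expansion coefficient whose image in $\Zbarp/\frakm$ is nonzero. First, I would replace $\EucL^-_{\adsx,\Sg}$ by the associated element of the completed group algebra $\Zbarp\powerseries{\Gamma^-}$ and recall that its $\mu$-invariant vanishes iff some coefficient with respect to a topological basis of $\Gamma^-$ is a $p$-adic unit. Because Katz's measure is constructed from a $p$-adic family of Eisenstein series whose arithmetic specializations have explicit Fourier/$q$-expansions, this reduces the question to showing that some Fourier coefficient of the mod-$p$ reduction of the Eisenstein series producing $\EucL^-_{\adsx,\Sg}$ is nonzero.

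Second, the relevant Fourier coefficient factorizes, by the standard Shimura/Katz calculation, into a product over places $v$ of local toric integrals. At a split prime $v \mid p$ the integral is controlled by the $p$-ordinary normalization and is automatically a unit; at a good finite place the contribution is a $p$-adic unit by the choice of Schwartz data; at a nonsplit finite place $v \mid \frakC^-$ the local integral is, up to a unit, the sum $\sum_{x} \adsx_v(x) - 1$ over a finite set of representatives, and its normalized $p$-adic valuation is exactly $\mu_p(\adsx_v)$. Hypothesis (L) thus forces all these local factors to be units, so it suffices to locate a Fourier index where the (finitely many) global sums governing the Eisenstein measure are nonzero modulo $\frakm$.

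Third, to produce such a nonvanishing coefficient globally, I would invoke a nonvanishing-mod-$p$ argument for Hecke $L$-values / theta series of Hida type: one shifts the Eisenstein series by elements of $\Gamma^-$, evaluates at a suitably chosen mod-$p$ CM point, and uses the density (à la Hida--Chai) of the $\Gamma^-$-translates of this CM point inside the relevant Hilbert--Igusa tower. This density argument shows that if every Fourier coefficient vanished mod $\frakm$, the entire mod-$p$ measure would vanish, and then comparing with a single nonzero Euler factor under (L) gives a contradiction --- unless the corresponding mod-$p$ Eisenstein series collapses to the diagonal contribution coming from a self-dual twist. Precisely this last possibility is ruled out by hypothesis (N): the residual equality $\wh{\adsx}_+ \equiv \qchKF \om_\cF \pmod{\frakm}$ is exactly the condition making the Eisenstein family residually reducible into a $\qchKF$-twisted Eisenstein summand, so (N) prevents the collapse.

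The main obstacle, and the step that requires care, is the density/linear-independence input used in the third paragraph. It is not automatic that shifting by $\Gamma^-$ and evaluating at a mod-$p$ CM point produces Zariski-dense (or, more precisely, linearly $\overline{\F}_p$-independent) images inside the special fiber of the relevant Shimura variety; one needs an analogue of Hida's result on the $\ell$-adic closure of an anticyclotomic orbit in the Igusa tower, compatible with the assumption $p \nmid D_\cF$. Once this density is in hand, the vanishing statement follows formally, and Theorem~B reduces to the single-coefficient check enabled by (L) and (N). In particular, Corollary~\ref{C:2.V} (the not-residually-self-dual case) is the clean output of this argument, whereas the self-dual Theorem~A of the paper requires the further refinement that tracks the exact valuation of the surviving local factors $\mu_p(\adsx_v)$.
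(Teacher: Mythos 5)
Your overall framework is the same as the paper's: realize $\EucL^-_{\adsx,\Sg}$ via the Eisenstein measure, factor Fourier coefficients into local Whittaker integrals, and reduce the vanishing of $\Imu^-$ to a coefficient computation by invoking Hida's linear independence of modular forms mod~$p$. Your paragraph 4 correctly identifies the independence/density result as the key technical input, and the requirement $p\nmid D_\cF$ is indeed tied to it. Your paragraph 2 is also in the right direction, although the local toric factor at $v\mid\frakC^-$ is a partial Gauss sum $A_\beta(\adsx_v)$ whose $p$-adic valuation \emph{varies with $\beta$}; $\mu_p(\adsx_v)$ is the infimum over $\beta$, not the valuation at each $\beta$.

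The real gap is in paragraph 3, where you pass from (L) and (N) to nonvanishing of a Fourier coefficient. Hypothesis (L) does not, by itself, make the coefficients units for generic $\beta$: for each $v\mid\frakC^-$ it only guarantees that there is \emph{some} local $\beta_v$ with $A_{\beta_v}(\adsx_v)$ a unit (this is \cite[Lemma 6.4]{Hsieh:Hecke_CM}), and the actual content of the step is to produce a single global $\beta\in\cF_+$ whose localizations simultaneously realize all these nonvanishing conditions. This is a strong-approximation/patching problem, not a contradiction against a ``single nonzero Euler factor''; if you assume all coefficients vanish mod $p$, you cannot compare with a single local factor, because the local $A_\beta(\adsx_v)$ do vanish for most $\beta$. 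What (N) actually does is kill the residual $\Z/2$ parity obstruction to this patching: when $\adsx$ is residually self-dual, $A_{\beta_v}(\adsx_v)\not\equiv 0$ pins down $\tau_{\cK_v/\cF_v}(\beta_v)$ (the local epsilon dichotomy, \lmref{L:key.V}), and the product of these signs over all $v$ must equal $+1$ for a totally positive global $\beta$, giving the root-number constraint of Theorem A. Hypothesis (N) removes that constraint and lets strong approximation go through unobstructed. Your characterization of (N) as preventing a ``collapse into a $\qchKF$-twisted Eisenstein summand'' gestures at the self-dual degeneration but is not the mechanism; the paper's actual proof (Corollary \ref{C:2.V}, using \cite[Prop.\,6.3, Lemma 6.4]{Hsieh:Hecke_CM}) is a direct construction of $\beta$, with (N) entering precisely at the patching step. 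Without supplying this construction, your proposal stops short of proving the theorem.
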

The above two theorems verify a conjecture of Gillard \cite[p.21 Conjecture (ii)]{Gillard:Remark_CM} when $p\ndivide  D_{\cF}$ (\cf the discussion in \cite[p.3]{Hida:VVmu}). Note that by the functional equation of complex $L$-functions, the $\mu$-invariant $\Imu^-=\infty$ (\ie $\EucL^-_{\adsx,\Sg}=0$) if $\adsx$ is self-dual and $W(\nads)=-1$. If $\cK$ is an imaginary quadratic field, \thmref{T:A.V} is proved by T. Finis \cite{Finis:vmu}. For general CM fields, both theorems are proved by Hida \cite{Hida:mu_invariant} under the assumption that $\frakC^-=(1)$. The idea of Hida is to construct a family of $p$-integral Eisenstein series $\stt{\cE^\circ_a}_{a\in\bfD}$ indexed by a suitable finite subset $\bfD$ of transcendental automorphism groups of the deformation space of the ordinary CM abelian variety $\EucA{}_{/\Fpbar}$ such that the $t$-expansion of some linear combination $\cE$ of $\stt{\cE^\circ_a}_{a\in\bfD}$ at the CM point $\EucA{}_{/\Fpbar}$ gives rise to the power series expansion of the measure $\EucL^-_{\adsx,\Sg}$. Using a key result on the linear independence of modular forms modulo $p$ \cite[Thm.\,3.20, Cor.\,3.21]{Hida:mu_invariant} combined with the $q$-expansion principle, Hida reduces the determination of $\Imu^-$ to an explicit computation of the Fourier coefficients of the Eisenstein series $\cE^\circ_a$. Assuming $\frakC^-=(1)$, Hida computes the Fourier coefficients of $\cE^\circ_a$, from which he is able to deduce a necessary and sufficient condition for the vanishing of $\mu$-invariant $\Imu^-$. As remarked by Hida, the reason for the assumption $\frakC^-=(1)$ is that the calculation of the Fourier coefficients is rather complicated if $\frakC^-\not =(1)$.

The aim of this paper is to lift the assumption $\frakC^-=(1)$. The idea is to construct a new family of the \emph{toric} Eisenstein series $\stt{\cE_a}_{a\in\bfD}$ of which the Fourier coefficients can be computed with the help of representation theory and rewrite $\cE$ as a linear combination of these $\cE_a$. The construction of $\cE_a$ relies on a special choice of local sections in a certain local principal series at each place of $\cF$. The choice of local sections outside $p$ has been made in \cite[\S\,4.3]{Hsieh:Hecke_CM}. At the places above $p$, such a choice is inspired by \cite{HLS}, where Katz \padic Eisenstein measure is studied from representation theoretic point of view. To obtain the formula of $\Imu^-$, we have to compute explicitly all Fourier coefficients of $\cE_a$, which in turn can be decomposed into a product of the local Whittaker integrals attached to these local sections. In \cite{Hsieh:Hecke_CM}, the local Whittaker integrals are determined explicitly by a straightforward computation at all places $v$ other than those inert or ramified with $v(\frakC^-)>1$. In general, for each $v|\frakC^-$ and $\beta\in\cF^\x_v$, the local $\beta$-th Whittaker integral is essentially the partial Gauss sum $A_{\beta}(\adsx_v)$ given by
\[A_{\beta}(\adsx_v)=\int_{\cF_v}\adsx_v^{-1}(x+2^{-1}\delta)\addchar^\circ(\beta x)dx,\]
where $\delta\in\cK_v$ such that $c(\delta)=-\delta$ and $\addchar^\circ$ is an additive character on $\cF_v$. It turns out that the $\mu$-invariant $\Imu^-$ is determined by the \padic valuations of $A_\beta(\adsx_v)$ with $\beta$ in the global field $\cF$ for all $v|\frakC^-$, and in particular, the non-vanishing modulo $p$ of $A_\beta(\adsx_v)$ for some $\beta\in\cF$ implies the vanishing of $\Imu^-$. It seems that $A_\beta(\adsx_v)$ is difficult to evaluate in general. However, we can deduce the vanishing of $\Imu^-$, assuming the vanishing of the local invariant $\mu_p(\adsx_v)$ for each $v|\frakC^-$. In other words, we can show the existence of $\beta\in\cF$ such that
\beqcd{n.v.}A_{\beta}(\adsx_v)\not\con 0\pmod{\frakm}\text{ for all }v|\frakC^-.\eeqcd Indeed, it is shown in \cite[Lemma 6.4]{Hsieh:Hecke_CM} that at each $v|\frakC^-$ there exists some $\beta_v$ in the local field $\cF_v$ with $A_{\beta_v}(\adsx_v)$ non-vanishing modulo $p$, and then the strong approximation enables us to deduce easily the existence of $\beta$ in the global field $\cF$ with the property \eqref{n.v.} if $\chi$ is not residually self-dual. In the special case $\adsx$ is self-dual and the global root number $W(\nads)=+1$, we further need to show that this $\beta_v$ satisfies certain epsilon dichotomy (See \propref{P:NVAbeta.V}). Under the assumption the ramified part of $\frakC^-$ is square-free, this epsilon dichotomy for $\beta_v$ is verified in \cite{Hsieh:Hecke_CM}. To treat the general case, we identify $A_\beta(\adsx_v)$ with the Whittaker integral associated to a certain Siegel-Weil section in the degenerate principal series of $U(1,1)$ and apply results in \cite[\S 6 and \S 8]{Harris:Theta_dichotomy} to show that $\beta$ indeed satisfies the epsilon dichotomy whenever $A_\beta(\adsx_v)\not =0$.

This paper is organized as follows. In the first three sections, we review the theory of \padic Hilbert modular forms and CM points in Hilbert modular varieties.
In \secref{S:ES}, we give the construction of our \padic Eisenstein measure $\cE$ (\propref{P:ESmeasure.V}). We show in \propref{P:EVCM} that the period integral of $\cE$ against a non-split torus gives rise to \padic $L$-functions for CM fields constructed in \cite{Katz:p_adic_L-function_CM_fields} and \cite{HidaTilouine:KatzPadicL_ASENS}.  In \secref{S:Hida.V}, we review Hida's theorem on the linear independence of modular forms applied by the automorphisms in $\bfD$ proved in \cite{Hida:mu_invariant}. Finally, in \secref{S:ThetaIntegral.V} after establishing a crucial lemma (\lmref{L:key.V}) relating the non-vanishing of $A_\beta(\adsx_v)$ to the epsilon dichotomy of $\beta$, we prove our main result (\thmref{T:main.V}).

\begin{thank}
The author would like to thank Professor Tamotsu Ikeda for helpful suggestions. The author is also very grateful to the referee for the careful reading of the manuscript and the suggestions on the improvements of the exposition.
\end{thank}
\def\lpp{\ol{\eta}^\setp}
\def\lsgNN{\opcpt_1(p^\infty)}
\def\plideal{\frakc}
\def\setpN{{(p\frakN)}}
\def\Zbarsetp{\Zbar_\setp}

\section{Notation and definitions}\label{S:Notation}
\subsection{}Let $\cF$ be a totally real field of degree $d$ over $\Q$ and let $\cK$ be a totally imaginary quadratic extension of $\cF$. Let $c$
be the complex conjugation, the unique non-trivial
element in $\Gal(\cK/\cF)$. Let $\OF$ and $\OK$ be the ring of integer of $\cF$ and $\cK$ respectively. Let $\cD_\cF$ (resp. $D_\cF$) be the different (resp. discriminant) of $\cF/\Q$. Let $\cD_{\cK/\cF}$ (resp. $D_{\cK/\cF})$ be the different (resp. discriminant) of $\cK/\cF$. For every fractional ideal $\frakb$ of $\OF$, set $\frakb^*=\frakb^{-1}\cD_\cF^{-1}$. Denote by $\bda=\Hom(\cF,\C)$ the set of archimedean places of $\cF$.
Denote by $\bdh$ (resp. $\bdh_\cK$) the set of finite places of $\cF$ (resp. $\cK$).
We often write $v$ for a place of $\cF$ and $w$ for the place of $\cK$ above $v$. Denote by $\cF_v$ the completion of $\cF$ at $v$ and by $\uf_v$ a uniformizer of $\cF_v$.
Let $\cK_v=\cF_v\ot_\cF\cK$.

We fix a rational prime $p$. Throughout this article, in addition to \eqref{ord}, we further assume
\beqcd{unr}2<p\ndivide D_\cF.\eeqcd
If $\Sg$ is a CM type of $\cK$, we put \[\Sg_p=\stt{w\in \bdh_\cK\mid w|p\text{ and $w$ is induced by $\iota_p\circ\sg$ for $\sg\in\Sg$}}.\]
Recall that $\Sg$ is $p$-ordinary if $\Sg_p\cap\Sg_pc=\emptyset$ and $\Sg_p\cup\Sg_pc$ is the set of places of $\cK$ lying above $p$. The existence of $p$-ordinary CM types is assured by \eqref{ord}. Hereafter
we fix a $p$-ordinary CM type $\Sg$, and identify
$\Sg$ with $\bda$ by the restriction to $\cF$.
\subsection{}
If $L$ is a number field, $\A_L$ is the adele of $L$ and $\A_{L,f}$
is the finite part of $\A_L$. The ring of integers of
$L$ is denoted by $\cO_L$. For $a\in\A_L$, we put
\[\il_L(a):=a(\cO_L\ot\Zhat)\cap L.\]
Denote by $G_L$ the absolute Galois group and by $\rec_L:\A_L^\x\to G^{ab}_{L}$ the geometrically normalized reciprocity law. 
Let $\addchar_\Q$ be the standard additive character
of $\A_\Q/\Q$ such that $\addchar_\Q(x_\infty)=\exp(2\pii x_\infty),\,x_\infty\in\R$. We define $\addchar_L:\A_L/L\to\C^\x$ by
$\addchar_L(x)=\addchar_\Q\circ\Tr_{L/\Q}(x)$. For $\beta\in L$, $\addchar_{L,\beta}(x)=\addchar_L(\beta x)$. If $L=\cF$, we write $\addchar$ for $\addchar_\cF$.

We choose once and for all an embedding
$\iota_\infty:\Qbar\hookto\C$ and an isomorphism
$\iota:\C\iso\Qbarp$, where $\Qbarp$ is the completion of an
algebraic closure of $\Qp$. Let
$\iota_p=\iota\iota_\infty:\Qbar\hookto\Qbarp$ be their composition.
We regard $L$ as a subfield in $\C$ (resp. $\C_p$) via
$\iota_\infty$ (resp. $\iota_p$) and $\Hom(L,\Qbar)=\Hom(L,\C_p)$.

Let $\Zbar$ be the ring of algebraic integers of $\Qbar$ and let $\Zbarp$ be the \padic completion of $\Zbar$ in $\Qbarp$. Let $\Zbar$ be the ring of algebraic integers of $\Qbar$ and let $\Zbarp$ be the \padic completion of $\Zbar$ in $\Qbarp$ with the maximal ideal $\frakm_p$. Let $\frakm=\iota_p^{-1}(\frakm_p)$.
\subsection{}
Let $F$ be a local field. We fix the choice of our Haar measure $dx$ on $F$. If $F$ is archimedean, $dx$ is the Lebesgue measure on $F$. If $F$ is a non-archimedean local field, $dx$ (resp. $\dx x$) is the Haar measure on $F$ (resp. $F^\x$) normalized so that $\vol(\cO_F,dx)=1$ (resp. $\vol(\cO_F^\x,\dx x)=1$). Denote by $\Abs_F$ the absolute value of $F$ such that $d(ax)=\abs{a}_Fdx$ for $a\in F^\x$. We often drop the subscript $F$ if it is clear from the context.

\section{Hilbert modular Shimura varieties and Hilbert modular forms}\label{S:Hilbert}
\subsection{}The purpose of this section is to review standard facts about Hilbert modular Shimura varieties and Hilbert modular forms. We follow the exposition in \cite[\S 4.2]{Hida:p-adic-automorphic-forms}. Let $V=\cF e_1\oplus\cF e_2$ be a two dimensional
$\cF$-vector space and $\pairing:V\x V\to \cF$ be the $\cF$-bilinear
alternating pairing defined by $\pair{e_1}{e_2}=1$. Let $\sL=\OF
e_1\oplus \OF^* e_2$ be the standard $\OF$-lattice in $V$. Let $G=\GL_2{}_{/\cF}$. For $g=\MX{a}{b}{c}{d}\in M_2(\cF)$, we define an involution $g'=\MX{d}{-b}{-c}{a}$. If $g\in\GL_2(\cF)=G(\cF)$, then $g'=g^{-1}\det g$. We
identify vectors in $V$ with row vectors according to the basis $e_1,e_2$, so $G$ has a natural right
action on $V$. Define a left action of $G$ on $V$ by $g*x:=x\cdot g',\,x\in V$.

For a finite place $v$ of $\cF$, we put
\[\opcpt^0_v=\stt{g\in G(\cF_v)\mid g*(\sL\ot_{\OF}\OFv)=\sL\ot_{\OF}\OFv}.\]
Let $\opcpt^0=\prod_{v\in\bdh}\opcpt^0_v$ and $\opcpt^0_p=\prod_{v|p}\opcpt^0_v$.
For a prime-to-$p$ positive integer $\frakN$, we define an open-compact subgroup $U(N)$ of $G(\AFf)$ by
\beq\label{E:opcpt.N}U(\frakN):=\stt{g\in G(\AFf)\mid g\con 1\pmod{\frakN\sL}}.\eeq
 Let $\opcpt$ be an open-compact subgroup of $G(\AFf)$ such that $K_p=\opcpt^0_p$. We assume that $\opcpt\supset U(\frakN)$ for some $\frakN$ as above and that
 $\opcpt$ is sufficiently small so that the following condition holds:
\beqcd{neat}\opcpt\text{ is neat and }\det (\opcpt)\cap
\OF_+^\x\subset (K\cap \OF^\x)^2.\eeqcd

\def\OPU{U}
\subsection{Kottwitz models}
We recall Kottwitz models of Hilbert modular Shimura varieties.
\begin{defn}[$S$-quadruples]\label{D:6.H} Let $\Box$ be a finite set of
rational primes and let $U$ be an open-compact subgroup of $\opcpt^0$ such that $\OPU\supset U(\frakN)$ for some positive integer $\frakN$ prime to $\Box$.
Let $\baseR_U=\Z_\bbox[\zeta_{\frakN}]$ with $\zeta_{\frakN}=\exp(\frac{2\pii}{\frakN})$. Define the fibered category $\cA^\bbox_{\OPU}$ over the category $SCH_{/\baseR_U}$ of schemes over $\baseR_U$ as follows. Let $S$ be a locally noethoerian connected $\baseR_U$-scheme and let $\ol{s}$ be a geometric point of $S$. Objects are abelian varieties with real
multiplication (AVRM) over $S$ of level $\OPU$, \ie a
$S$-\emph{quadruple} $(A,\ollam,\iota,\ol{\eta}^\bbox)_S$ consisting of the
following data:
\begin{enumerate}
\item $A$ is an abelian scheme of dimension $d$ over $S$.
\item $\iota :\OF\hookrightarrow \End_S A\ot_\Z\ZZbox$.
\item $\lam$ is a prime-to-$\Box$ polarization of $A$ over $S$ and
$\ollam$ is the $\OF_{\bbox,+}$-orbit of $\lam$. Namely
\[\ollam=\OF_{\bbox,+}\lam:=\stt{\lam'\in\Hom(A,A^t)\ot_\Z\ZZbox\mid \lam'=\lam\circ a,\,a\in O_{\bbox,+}}.\]
\item $\ol{\eta}^\bbox=\eta^\bbox\OPU^\bbox$ is a $\pi_1(S,\ol{s})$-invariant $\OPU^\bbox$-orbit of isomorphisms of $\cO_\cK$-modules $\eta^\bbox: \sL\ot_\Z\A_f^\bbox\isoto V^\bbox(A_{\ol{s}}):=H_1(A_{\ol{s}},\A_f^\bbox)$. Here we define $\eta^\bbox g$ for $g\in G(\AFf^\bbox)$ by $\eta^\bbox g(x)=\eta^\bbox(g*x)$.
\end{enumerate}
Furthermore, $(A,\ollam,\iota,\ol{\eta}^\bbox )_S$ satisfies the
following conditions:
\begin{itemize}
\item Let ${}^t$ denote the Rosati involution induced by $\lam$ on
$\End_SA\ot\ZZbox$. Then $\iota(b)^t=\iota(b),\, \forall\, b\in
\OF.$
\item Let $e^\lam$ be the Weil pairing induced by $\lam$. Lifting the isomorphism $\Z/\frakN\Z\iso \Z/\frakN\Z(1)$ induced by $\zeta_{\frakN}$ to an isomorphism $\zeta:\Zhat\iso\Zhat(1)$, we can regard $e^\lam$ as an $\cF$-alternating form
$e^\lam:V^\bbox(A)\times V^\bbox(A)\to O^*\ot_\Z\A_f^\bbox$. Let $e^\eta$ denote the
$\cF$-alternating form on $V^\bbox(A)$ induced by
$e^\eta(x,x')=\pair{x\eta}{x'\eta}$. Then
\[e^\lam=u\cdot e^\eta\text{ for some }u\in\AFf^\bbox.\]
\item As
$\OF\ot_\Z\cO_S$-modules, we have an isomorphism $\Lie A\iso \OF\ot_\Z\cO_S$ locally under Zariski topology of $S$.
\end{itemize}
For two $S$-quadruples $\ulA=(A,\ollam,\iota,\ol{\eta}^\bbox )_S$ and $\ul{A'}=(A',\ol{\lam'},\iota',\ol{\eta'}{}^\bbox)_S$,
we define morphisms by
\[\Hom_{\cA^\bbox_{\opcpt}}(\ulA,\ul{A'})=\stt{\phi\in \Hom_{\OF}(A,A')\mid
\phi^*\ol{\lam'}=\ollam,\,\phi\circ \ol{\eta'}{}^\bbox=\ol{\eta}^\bbox }.\] We
say $\ulA\sim \ul{A'}$ (resp. $\ulA\iso \ul{A'}$) if there exists a
prime-to-$\Box$ isogeny (resp. isomorphism) in
$\Hom_{\cA_\opcpt^\bbox}(\ulA,\ul{A'})$.
\end{defn}
We consider the cases when $\Box=\emptyset$ and $\stt{p}$. When
$\Box=\emptyset$ is the empty set and $\OPU$ is an open-compact
subgroup in $G(\AFf^\bbox)=G(\AFf)$, we define the functor
$\cE_{\OPU}:\SCH_{/\baseR_U}\to\ENS$ by
\[\cE_\OPU(S)=\stt{(A,\ollam,\iota,\ol{\eta} )_S\in\cA_\opcpt(S)}/\sim.\] By the theory of Shimura-Deligne, $\cE_{\OPU}$ is represented by
$\sh_\OPU$ which is a quasi-projective scheme over $\baseR_{\OPU}$. We define
the functor $\frakE_\OPU:\SCH_{/\baseR_{\OPU}}\to\ENS$ by
\[\frakE_\OPU(S)=\stt{(A,\ollam,\iota,\ol{\eta})\in\cA^\bbox_\OPU(S)\mid \eta^\bbox(\sL\ot_\Z\Zhat)=H_1(A_{\ol{s}},\Zhat)}/\iso.\]
By the discussion in \cite[p.136]{Hida:p-adic-automorphic-forms}, we have $\frakE_\opcpt\isoto\cE_\opcpt$ under the hypothesis \eqref{neat}.

When $\Box=\stt{p}$ and $\OPU=\opcpt$, we let $\baseR=\baseR_\opcpt=\Z_\setp[\zeta_N]$ and define functor
$\cE^\setp_{\opcpt}:\SCH_{/\baseR}\to\ENS$ by
\[\cE^\setp_{\opcpt}(S)=\stt{(A,\ollam,\iota,\ol{\eta}^\setp)_S\in\cA_{\opcpt^\setp}^\setp(S)}/\sim.\]
In \cite{Kottwitz:Points-On-Shimura-Varieties}, Kottwitz shows
$\cE^\setp_{\opcpt}$ is representable by a quasi-projective
scheme $\sh^\setp_{\opcpt}$ over $\baseR$ if $\opcpt$ is neat. Similarly we
define the functor $\frakE_K^\setp:\SCH_{/\baseR}\to\ENS$ by
\[\frakE_K^\setp(S)=\stt{(A,\ollam,\iota,\ol{\eta}^\setp)\in\cA^\setp_\opcpt(S)\mid \eta^\setp(\sL\ot_\Z\Zhat^\setp)=H_1(A_{\ol{s}},\Zhat^\setp)}/\iso.\]
It is shown in \cite[\S 4.2.1]{Hida:p-adic-automorphic-forms} that
$\frakE^\setp_K\isoto\cE^\setp_K$.

Let $\plideal$ be a prime-to-$p\frakN$ ideal of $\OF$ and let $\bfc\in (\AFf^\setpN)^\x$ such that $\plideal=\il_{\cF}(\bfc)$. We say $(A,\lam,\iota,\ol{\eta}^\setp)$ is $\plideal$-polarized if $\lam\in\ollam$ such that $e^\lam=ue^{\eta},\,u\in \bfc\det(\opcpt)$.
The isomorphism class $[(A,\lam,\iota,\ol{\eta}^\setp)]$ is independent of a choice of $\lam$ in $\ol{\lam}$ under the assumption \eqref{neat} (\cf\cite[p.136]{Hida:p-adic-automorphic-forms}).
We consider the functor
\[\frakE^\setp_{\plideal,\opcpt}(S)=\stt{\text{$\plideal$-polarized $S$-quadruple }[(A,\lam,\iota,\ol{\eta}^\setp)_S]\in\frakE^\setp_{\opcpt}(S)}.\]
Then $\frakE^\setp_{\plideal,\opcpt}$ is represented by a geometrically irreducible scheme $\sh^\setp_\opcpt(\plideal)_{/\baseR}$, and we have
\beq\label{E:decomposition}\sh^\setp_\opcpt{}_{/\baseR}=\disjoint_{[\plideal]\in\Cl^+_\cF(\opcpt)}\sh^\setp_\opcpt(\plideal)_{/\baseR},\eeq
where $\Cl^+_\cF(\opcpt)$ is the narrow ray class group of $\cF$ with level $\det(\opcpt)$.
\def\Igusa{\Ig_{\opcpt,n}}
\subsection{Igusa schemes}\label{17.H}
Let $n$ be a positive integer. Define the functor $\cI^\setp_{K,n}:\SCH_{/\baseR}\to\ENS$ by
\[S\mapsto \cI^\setp_{K,n}(S)=\stt{(A,\ollam,\iota,\lpp,\lp)_S}/\sim,
\]
where $(A,\ollam,\iota,\lpp)_S$ is a $S$-quadruple, $\lp$ is a level $p^n$-structure, \ie an $\OF$-group scheme morphism:
\[\lp:\OF^*\ot_\Z\bbmu_{p^n}\hookto A[p^n],\]
and $\sim$ means modulo prime-to-$p$ isogeny. It is known that $\cI^\setp_{K,n}$ is relatively representable over $\cE^\setp_{K}$ (\cf\cite[Lemma (2.1.6.4)]{HLS}) and thus is represented by a scheme $\Igusa$.

Now we consider $S$-quintuples $(A,\lam,\iota,\lpp,\lp)_S$ such that $[(A,\lam,\iota,\lpp)]\in\frakE^\setp_{\plideal,K}(S)$. Define the functor $\cI^\setp_{K,n}(\plideal):\SCH_{/\baseR}\to\ENS$ by
\begin{align*}S\mapsto\cI^\setp_{K,n}(\plideal)(S)&=\stt{(A,\lam,\iota,\lpp,\lp)_S\text{ as above}}/\iso.\\
\end{align*}
Then $\cI^\setp_{K,n}(\plideal)$ is represented by a scheme $\Igusa(\plideal)$ over $\sh^\setp_\opcpt(\plideal)$, and $\Igusa(\plideal)$ can be identified with a geometrically irreducible subscheme of $\Igusa$ (\cite[Thm.\,(4.5)]{DR_padic_L}).
For $n\geq n'>0$, the natural morphism
$\pi_{n,n'}:\Igusa(\plideal)\to\Ig_{\opcpt,n'}(\plideal)$ induced by the
inclusion $\OF^*\ot\bbmu_{p^{n'}}\hookto \OF^*\ot\bbmu_{p^n}$ is finite
\etale.
The forgetful
morphism $\pi:\Igusa(\plideal)\to \sh^\setp_{\opcpt}(\plideal)$ defined by
$\pi:(\ulA,\lp)\mapsto \ulA$ is \etale for all $n>0$. Hence
$\Igusa(\plideal)$ is smooth over $\Spec\baseR$. We write $\Ig_K(\plideal)$ for $\prolim_n\Igusa(\plideal)$.

\subsection{Complex uniformization}\label{S:cpx} We describe the complex points $\sh_\OPU(\C)$ for $U\subset G(\AFf)$. Put
\[X^+=\stt{\tau=(\tau_\sg)_{\sg\in\bda}\in\C^{\bda}\mid \Im \tau_\sg >0\text{ for all } \sg\in\bda}.\]
Let $\cF_+$ be the set of totally positive elements in $\cF$ and let $G(\cF)^+=\stt{g\in G(\cF)\mid \det g\in\cF_+}$. Define the complex Hilbert modular Shimura variety by
\[M(X^+,\OPU):=G(\cF)^+\bksl X^+\x G(\AFf)/\OPU.\]
It is well known that $M(X^+,\opcpt)\isoto\sh_\OPU(\C)$ by the theory
of abelian varieties over $\C$ (\cf\cite[\S\,4.2]{Hida:p-adic-automorphic-forms}).

For $\tau=(\tau_\sg)_{\sg\in\bda}\in X^+$, we let $p_\tau$ be the isomorphism $V\ot_\Q\R\isoto \C^{\bda}$ defined by
$p_\tau(ae_1+be_2)=a\tau+b$ with $a,b\in \cF\ot_\Q\R=\R^{\bda}$. We can associate a AVRM to $(\tau,g)\in X^+\x G(\AFf)$ as follows.
\begin{itemize}
\item The complex abelian variety $\EucA_g(\tau)=\C^{\bda}/p_\tau(g*\sL)$.
\item  The $\cF_+$-orbit of polarization
$\ol{\pairing}_\can$ on $\EucA_g(\tau)$ is given by the Riemann form $\pairing_\can:=\pairing\circ p_\tau^{-1}$.
\item The $\iota_\C:O\hookto\End \EucA_g(\tau)\ot_\Z\Q$ is induced from the pull back of the natural $\cF$-action on $V$ via $p_\tau$.
\item The level structure $\eta_g:
\sL\ot_\Z\A_f \isoto (g*\sL)\ot_\Z\A_f=H_1(\EucA_g(\tau),\A_f)$ is defined by  $\eta_g(v)= g*v$.\end{itemize}
Let $\ul{\EucA_g(\tau)}$ denote the $\C$-quadruple $(\EucA_g(\tau),\ol{\pairing}_\can,\iota_\C,\opcpt\eta_g)$. Then the map $[(\tau,g)]\mapsto [\ul{\EucA_g(\tau)}]$ gives rise to
an isomorphism $M(X^+,\OPU)\isoto \sh_\OPU(\C)$.

For a positive integer $n$, the exponential map gives the isomorphism $\exp(2\pii -):p^{-n}\Z/\Z\iso\bbmu_{p^n}$ and thus induces a level $p^n$-structure $\lp(g_p)$:
\begin{align*}\lp(g_p)\colon&O^*\ot_\Z\bbmu_{p^n}\isoto O^*e_2\ot_\Z p^{-n}\Z/\Z\hookto \sL\ot_\Z p^{-n}\Z/\Z\stackrel{g*}\isoto \EucA_g(\tau)[p^n].
\end{align*}
Put
\[\lsgN:=\stt{g\in\opcpt\mid g_p\con\MX{1}{*}{0}{1}\pmod{p^n}}.\]
We have a non-canonical isomorphism:
\begin{align*}
M(X^+,\lsgN)&\isoto \Igusa(\C)\\
 [(\tau,g)]&\mapsto  [(\EucA_g(\tau),\ol{\pairing}_\can,\iota_\C,\ol{\eta}^\setp_g, \lp(g_p))].\end{align*}

Let $\ulz=\stt{z_\sg}_{\sg\in\bda}$ be the standard complex coordinates of $\C^{\bda}$ and $d\ulz=\stt{dz_\sg}_{\sg\in\bda}$. Then $O$-action on $d\ulz$ is given by
$\iota_\C(\al)^* dz_\sg=\sg(\al)dz_\sg,\,\sg\in\bda=\Hom(\cF,\C)$. Let $z=z_{id}$ be the coordinate corresponding to $\iota_\infty:\cF\hookto\Qbar\hookto\C$.
Then
\beq\label{E:7.N}(\OF\ot_\Z\C) dz=H^0(\EucA_g(\tau),\Omega_{\EucA_g(\tau)/\C}).\eeq
\subsection{Hilbert modular forms}For $\tau\in \C$ and $g=\MX{a}{b}{c}{d}\in\GL_2(\R)$,
we put \beq\label{E:4.N}J(g,\tau)=c\tau+d.\eeq For
$\tau=(\tau_\sg)_{\sg\in\bda}\in X^+$ and $g_\infty=(g_\sg)_{\sg\in\bda}\in
G(\cF\ot_\Q\R)$, we put
\[\ul{J}(g_\infty,\tau)=\prod_{\sg\in\bda}J(g_\sg,\tau_\sg).\]
\begin{defn}
Denote by $\bfM_k(\lsgN,\C)$ the space of holomorphic Hilbert modular form of parallel weight $k\Sg$ and level $\lsgN$.
Each $\f\in\bfM_k(\lsgN,\C)$ is a $\C$-valued function $\f:X^+\x G(\AFf)\to \C$ such that the function $\f(-,g_f):X^+\to\C$ is holomorphic for each $g_f\in G(\AFf)$ and
\[\f(\al(\tau,g_f)u)=\ul{J}(\al,\tau)^{k\Sg}\f(\tau,g_f)\text{ for all }u\in \lsgN\text{ and }\al\in G(\cF)^+.\]
\end{defn}
For every $\f\in \bfM_k(\lsgN,\C)$, we have the Fourier expansion
\[\f(\tau,g_f)=\sum_{\beta\in\cF_+\cup\stt{0}}W_\beta(\f,g_f)e^{2\pii\Tr_{\cF/\Q}(\beta \tau)}.\]
We call $W_\beta(\f,g_f)$ the $\beta$-th Fourier coefficient of
$\f$ at $g_f$.

For a semi-group $L$ in $\cF$, let $L_+=\cF_+\cap L$ and $L_{\geq 0}=L_+\cup \stt{0}$. If $B$ is a ring, we denote by $B\powerseries{L}$ the set of all formal series
\[\sum_{\beta\in L}a_\beta q^\beta,\,a_\beta\in B.\]
Let $a,b\in(\AFf^{(p\frakN)})^\x$ and let $\fraka=\il_\cF(a)$ and
$\frakb=\il_\cF(b)$. The $q$-expansion of $\f$ at the cusp $(\fraka,\frakb)$ is given by
\beq\label{E:FC0}\f|_{(\fraka,\frakb)}(q)=\sum_{\beta\in (N^{-1}\fraka\frakb)_{\geq 0}}W_\beta(\f,\MX{a^{-1}}{0}{0}{b})q^\beta\in \C\powerseries{(N^{-1}\fraka\frakb)_{\geq 0}}. \eeq
If $B$ is a $\baseR$-algebra in $\C$, we put
\begin{align*}
\bfM_k(\plideal,\lsgN,B)&=\stt{\f\in \bfM_k(\lsgN,\C)\mid \f|_{(\fraka,\frakb)}(q)\in B\powerseries{(N^{-1}\fraka\frakb)_{\geq 0}}\text{ for all }(\fraka,\frakb)
\text{ with $\fraka\frakb^{-1}=\frakc$}}.\\
\end{align*}
\subsubsection{Tate objects}
Let $\sS$ be a set of $d$ linearly $\Q$-independent elements in $\Hom(\cF,\Q)$ such that $l(\cF_+)>0$ for $l\in\sS$. If $L$ is a lattice in $\cF$ and $n$ a positive integer, let
$L_{\sS,n}=\stt{x\in L\mid l(x)>-n\text{ for all }l\in\sS}$ and put $B((L;\sS))=\lim\limits_{n\to\infty} B\powerseries{L_{\sS,n}}$.
To a pair $(\fraka,\frakb)$ of two prime-to-$pN$ fractional ideals , we can attach the Tate AVRM $Tate_{\fraka,\frakb}(q)=\fraka^*\ot_\Z\Gm/q^{\frakb}$ over $\Z((\fraka\frakb;\sS))$ with $O$-action $\iota_\can$. As described in \cite{Katz:p_adic_L-function_CM_fields}, $Tate_{\fraka,\frakb}(q)$ has a canonical $\fraka\frakb^{-1}$-polarization $\lam_\can$ and also carries $\Om_\can$ a canonical $\OF\ot\Z((\fraka\frakb;\sS))$-generator of $\Omega_{Tate_{\fraka,\frakb}}$ induced by the isomorphism $\Lie(Tate_{\fraka,\frakb}(q)_{/\Z((\fraka\frakb;\sS))})=\fraka^*\ot_\Z\Lie(\Gm)\iso\fraka^*\ot\Z((\fraka\frakb;\sS))$.
Since $\fraka$ is prime to $p$, the natural inclusion $\fraka^*\ot_\Z\bbmu_{p^n}\hookto\fraka^*\ot_\Z\Gm$ induces a canonical level $p^n$-structure $\eta_{p,\can}\colon \OF^*\ot_\Z\bbmu_{p^n}=\fraka^*\ot_\Z\bbmu_{p^n}\hookto Tate_{\fraka,\frakb}(q)$. Let $\sL_{\fraka,\frakb}=\sL\cdot\MX{\frakb}{}{}{\fraka^{-1}}=\frakb e_1\oplus\fraka^*e_2$. Then we have a level $N$-structure  $\eta_\can^\setp:N^{-1}\sL_\ab/\sL_\ab\isoto Tate_\ab(q)[\frakN]$
over $\Z[\zeta_N]((N^{-1}\fraka\frakb;\sS))$ induced by the fixed primitive $N$-th root of unity $\zeta_N$.
We write $\ul{Tate}_\ab$ for the Tate $\Z((\fraka\frakb;\sS))$-quintuple $(Tate_\ab(q),\lam_\can,\iota_\can,\ol{\eta}^\setp_\can,\eta_{p,\can})$ at $(\fraka,\frakb)$.

\subsubsection{Geometric modular forms}\label{S:GME}We collect here definitions and basic facts of geometric modular forms. The whole theory can found in \cite{Katz:p_adic_L-function_CM_fields} and \cite{Hida:p-adic-automorphic-forms}.
Let $T=\Res_{\OF/\Z}\Gm$ and $\kappa\in\Hom(T,\Gm)$. Let $B$ be an $\bR$-algebra.
Consider $[(\ulA,\lp)]=[(A,\lam,\iota,\lpp,\lp)]\in\Igusa(\plideal)(C)$ (resp. $[(\ulA,\lp)]=[(A,\ollam,\iota,\lpp,\lp)]\in \Igusa(C)$) for a $B$-algebra $C$ with a differential form $\Om$ generating
$H^0(A,\Omega_{A/C})$ over $\OF\ot_\Z C$. A geometric modular form $f$ over $B$ of weight $\kappa$ on $\Igusa(\plideal)$ (resp. $\Igusa$) is a functorial rule of
assigning a value $f(\ulA,\lp,\Om)\in C$ satisfying the following axioms.
\begin{mylist}
\item[(G1)] $f(\ulA,\lp,\Om)=f(\ulA',\lp',\Om')\in C$ if $(\ulA,\lp,\Om)\iso (\ulA',\lp',\Om')$ over $C$,
\item[(G2)]For a $B$-algebra homomorphism $\varphi:C\to C'$, we have
\[f((\ulA,\lp,\Om)\ot_C C')=\varphi(f(\ulA,\lp,\Om)),\]
\item[(G3)]$f((\ulA,\lp,a\Om)=\kappa(a^{-1})f(\ulA,\lp,\Om)$ for all $a\in T(C)=(\OF\ot_\Z C)^\x$,
\item[(G4)]$f(\ul{Tate}_\ab,\Om_\can)\in B\powerseries{(N^{-1}\fraka\frakb)_{\geq 0}}\text{ at all cusps }(\fraka,\frakb)$ in $\Igusa(\plideal)$ (resp. $\Igusa$).
\end{mylist}
For a positive integer $k$, we regard $k\in\Hom(T,\Gm)$ as the character $x\mapsto \bfN_{\cF/\Q}(x)^k,\,x\in O^\x$.
We denote by $\cM_k(\plideal,\lsgN,B)$ (resp. $\cM_k(\lsgN,B)$) the space of geometric modular forms over $B$ of weight $k$ on $\Igusa(\plideal)$ (resp. $\Igusa$).
For $f\in\cM_k(\lsgN,B)$, we write $f|_\plideal\in\cM_k(\plideal,\lsgN,B)$ for $f|_{\Igusa(\plideal)}$.

For each $f\in \cM_k(\lsgN,\C)$, we regard $f$ as a holomorphic Hilbert modular form of weight $k$ and level $\lsgN$ by
\[f(\tau,g_f)=f(\EucA_g(\tau),\ol{\lam_\can},\iota_\C,\ol{\eta}_g,2\pii dz),\]
where $dz$ is the differential form in \eqref{E:7.N}. By GAGA principle, this gives rise to an isomorphism $\cM_{k}(\lsgN,\C)\isoto\bfM_k(\lsgN,\C)$ and  $\cM_{k}(\plideal,\lsgN,\C)\isoto\bfM_k(\plideal,\lsgN,\C)$.
As discussed in \cite[\S 1.7]{Katz:p_adic_L-function_CM_fields}, the evaluation $\f(\ul{Tate}_\ab,\Om_\can)$ is independent of the auxiliary choice of $\sS$ in the construction of the Tate object. Moreover, we have the following important identity which bridges holomorphic modular forms and geometric modular forms
\[\f|_{(\ab)}(q)=\f(\ul{Tate}_\ab,\Om_\can)\in\C\powerseries{(N^{-1}\fraka\frakb)_{\geq 0}}.\]
By the $q$-expansion principle (See \cite{KaiWenLan:FourierExpansion}), if $B$ is $\baseR$-algebra in $\C$ and $\f \in\bfM_k(\plideal,\lsgN,B)\iso \cM_k(\plideal,\lsgN,B)$, then $\f|_\plideal\in\cM_{k}(\plideal,\lsgN,B)$.

\subsubsection{\padic modular forms}Let $B$ be a \padic $\baseR$-algebra in $\Cp$. Let $V(\plideal,\opcpt,B)$ be the space of Katz \padic modular forms over $B$ defined by
\[V(\plideal,\opcpt,B):=\prolim_m\dirlim_n H^0(\Igusa(\plideal){}_{/B/p^mB},\cO_{\Igusa}).\]
 In other words, Katz \padic modular forms are formal functions on the Igusa tower. Let $C$ be a $B/p^mB$-algebra. For each $C$-point $[(\ulA,\lp)]=[(A,\lam,\iota,\lpp,\lp]\in\Ig_\opcpt(\plideal)(C)=\prolim_n\Igusa(\plideal)(C)$, the $p^\infty$-level structure $\lp$ induces an isomorphism $\lp_*:\OF^*\ot_\Z C\iso \Lie A$, which in turns gives rise to a generator $\Om(\lp)$ of $H^0(A,\Omega_A)$ as a $\OF\ot_\Z C$-module.
We thus have a natural injection\beq\label{E:padicavatar.N}
\begin{aligned}\cM_k(\plideal,\lsgN,B)&\hookto V(\plideal,\opcpt,B)\\
f&\mapsto \wh{f}(\ulA,\lp):=f(\ulA,\lp,\Om(\lp))
\end{aligned}\eeq
which preserves the $q$-expansions in the sense that $\wh f|_{(\ab)}(q):=\wh f(\ul{Tate}_\ab)=f|_{(\ab)}(q)$. We will call $\wh{f}$ the \padic avatar of $f$.

\subsection{Hecke action.}Let $h\in G(\AFf^\setpN)$. Put ${}_hK=hK h^{-1}$. We define $\smid h:\cI^\setp_{{}_hK,n}\isoto\cI^\setp_{K,n}$ by
\[(A,\ollam,\iota,\ol{\eta}^\setp,\lp)\mapsto \ulA\smid h=(A,\ollam,\iota,\ol{\eta}^\setp h,\lp).\]
Then $\smid h$ induces an $\baseR$-isomorphism $\Igusa\isoto
\Ig_{{}_h\opcpt,n}$. In addition, $\smid h$ induces an $\baseR$-isomorphism $\Igusa(\plideal)\isoto\Ig_{{}_hK,n}(\plideal(h))$ with $\plideal(h)=\plideal\det(h)^{-1}$ and hence
$\cM_k(\plideal(h),\lsgN,B)\isoto\cM_k(\plideal,{}_h\lsgN,B)$ for every $\baseR$-algebra $B$.

Using the description of the complex points $\sh^\setp_{K}(\C)$ in \secref{S:cpx}, the two pairs $(\ul{\EucA_g(\tau)}\smid h,\Om)$ and $(\ul{\EucA_{gh}(\tau)},\Om)$ are $\Z_\setp$-isogenous
, so we have the isomorphism:
\beq\label{E:HeckeAct}\begin{aligned}
\bfM_k(\plideal(h),\lsgN,\C)&\isoto \bfM_k(\plideal,{}_h\lsgN,\C)\\
\f&\mapsto\f|h(\tau,g)=\f(\tau,gh).\end{aligned}\eeq


\section{CM points}\label{S:CMpoint} \subsection{}\label{S:CM1} In this section, we give an adelic description of CM points in Hilbert modular varieties.
Fix a prime-to-$p$ integral ideal $\frakC$ of $\OK$. Decompose $\frakC=\frakC^+\frakC^-$, where $\frakC^+=\Csplit\Csplit_c$ is a product of split primes in
$\cK/\cF$ such that $(\Csplit,\Csplit_c)=1$ and $\Csplit\subset\Csplit_c^c$, and $\frakC^-$ is a product of non-split primes in $\cK/\cF$. Let \[\frakD:=p\frakC\frakC^cD_{\cK/\cF}.\] We choose $\skewhf\in\cK$ such that
\begin{itemize}
\item[(d1)] $\skewhf^c=-\skewhf$ and
$\Im\sg(\skewhf)>0$ for all $\sg\in\Sg$,
\item[(d2)] $\frakc(\OK):=\cD_{\cF}^{-1}(2\skewhf\cD_{\cK/\cF}^{-1})$ is prime to $\frakD$.
\end{itemize}
Let
$\skewhf^\Sg:=(\sg(\skewhf))_{\sg\in\Sg}\in X^+$. Let $D=-\skewhf^2\in \cF_+$ and define
$\rho:\cK\hookto M_2(\cF)$ by
\[\rho(a\skewhf+b)=\MX{b}{-D a}{a}{b}.\]Consider the isomorphism $q_\skewhf:\cK\isoto \cF^2=V$ defined by $q_\skewhf(a\skewhf+b)=ae_1+be_2$.
Note that $(0,1)\rho(\al)=q_\skewhf(\al)$
and $q_\skewhf(x \al)=q_\skewhf(x)\rho(\al)$ for $\al,x\in \cK$. Let $\C(\Sg)$ be the $\cK$-module whose underlying space is
$\C^\Sg$ with the $\cK$-action given $\al(x_\sg)=(\sg(\al)x_\sg)$.
Then we have a canonical isomorphism $\cK\ot_\Q\R=\C(\Sg)$ and an isomorphism $p_\skewhf=q_\skewhf^{-1}:V\ot_\Q\R\isoto \cK\ot_\Q\R=\C(\Sg)$.

\subsection{}\label{S:different}
For each split place $v|p\Csplit\Csplit^c$, we decompose $v=w\wbar$ into two places $w$ and $\wbar$ of $\cK$ with $w|\Csplit\Sg_p$. Here $w|\Csplit\Sg_p$ means $w|\Csplit$ or $w\in\Sg_p$. Let $e_{w}$ (resp.
$e_{\wbar}$) be the idempotent associated to $w$ (resp. $\wbar$). Then $\stt{e_w,e_{\wbar}}$ gives an $\OFv$-basis of $\OKv$. Let $\skewhf_w\in \cF_v$ such that $\skewhf=-\skewhf_we_{\wbar}+\skewhf_w e_w$.

For a non-split place $v$ and $w$ the place of $\cK$ above $v$, we fix a $\OFv$-basis $\stt{1,\bftheta_v}$ such that $\bftheta_v$ is a uniformizer if $v$ is ramified and $\ol{\bftheta_v}=-\bftheta_v$ if $v\ndivides 2$. We let $t_v=\bftheta_v+\ol{\bftheta_v}$ and let $\delta_v:=\bftheta_v-\ol{\bftheta_v}$ be a generator of the relative different $\cD_{\cK_w/\cF_v}$.

Fix a finite idele $d_\cF=(d_{\cF_v})\in \adelef$ such that $\il_\cF(d_\cF)=\cD_\cF$. By condition (d2), we may choose $d_{\cF_v}=2\skewhf\delta_v^{-1}$ if $v|D_{\cK/\cF}\frakC^-$ (resp. $d_{\cF_v}=-2\skewhf_w$ if $w|\Csplit\Sg_p$).
\subsection{A good level structure}
We shall fix a choice of a basis $\stt{e_{1,v},e_{2,v}}$ of $R\ot_\OF \OF_v$ for each finite
place $v$ of $\cF$.
If $v\ndivides p\frakC\frakC^c$, we choose
$\stt{e_{1,v},e_{2,v}}$ in $\OK\ot\OF_v$ such that
$R\ot_{\OF}\OF_v=\OF_v e_{1,v}\oplus \OF_v^* e_{2,v}$. Note that $\stt{e_{1,v},e_{2,v}}$ can be taken to be $\stt{\skewhf,1}$
except for finitely many $v$. If $v|p\Csplit\Csplit^c$, let $\stt{e_{1,v},e_{2,v}}=\stt{e_{\wbar},d_{\cF_v}\cdot e_{w}}$ with $w|\Csplit\Sg_p$.
If $v$ is inert or ramified, let
$\stt{e_{1,v},e_{2,v}}=\stt{\bftheta_v,d_{\cF_v}\cdot 1}$.
For $v\in\bdh$, we let $\cmptv$ be
the element in $\GL_2(\cF_v)$ such that
$e_i\cmptv'=q_\skewhf(e_{i,v})$. For
$v=\sg\in \bda$, let $\cmptv=\MX{\Im\sg(\skewhf)}{0}{0}{1}$. We define $\cmpt=\prod_{v}\cmptv\in\GL_2(\A_\cF)$. Let $\cmpt_f$ be the finite component of $\cmpt$. By the definition of
$\cmpt$, we have
\[\cmpt_f*(\sL\ot_\Z\Zhat)=(\sL\ot_\Z\Zhat)\cdot\cmpt_f'=q_\skewhf(\OK\ot_\Z\Zhat).\]
The matrix representation of $\cmptv$ according to $\stt{e_1,e_2}$ for $v|\frakD$ is given as follows:
\beq\begin{aligned}\label{E:cm.N}
\cmptv&=\MX{d_{\cF_v}}{-2^{-1}t_v}{0}{d_{\cF_v}^{-1}}\text{ if $v|D_{\cK/\cF}\frakC^-$},\\
\cmptv&= \MX{\frac{d_{\cF_v}}{2}}{-\onehalf}{\frac{-d_{\cF_v}}{2\skewhf_w}}{\frac{-1}{2\skewhf_w}}=\MX{-\skewhf_w}{-\onehalf}{1}{\frac{-1}{2\skewhf_w}}
\text{ if }v|p\Csplit\Csplit^c\text{ and }w|\Csplit\Sg_p.\\
\end{aligned}\eeq

\subsection{}\label{SS:CMpoint}
\def\CMring{{\wh{W}}}
The alternating pairing $\pairing:\cK\x\cK:\to\cF$
defined by $\pair{x}{y}=(c(x)y-xc(y))/2\skewhf$ induces an isomorphism
$\OK\wedge_{\OF} \OK=\frakc(\OK)^{-1}\cD_\cF^{-1}$ for
ideal $\frakc(\OK)=\cD_\cF^{-1}(2\skewhf \cD_{\cK/\cF}^{-1})$. On the other hand, by the equation
\[\cD_\cF^{-1}\det(\cmpt_f)=\wedge^2\sL \cmpt_f'=\wedge^2\OK=\frakc(\OK)^{-1}\cD_\cF^{-1},\]
we also have $\frakc(\OK)=(\det(\cmpt_f))^{-1}$.

For $a\in (\AKf^\setpN)^\x$, put $\fraka=\il_\cK(a)$ and $\frakc(a):=\frakc(\OK)\rmN_{\cK/\cF}(\fraka)^{-1}$. We let
\[(\ul{A}(a),\lp(a))_{/\C}=(\EucA_{\rho(a)\cmpt_f}(\skewhf^\Sg),\pairing_\can,\iota_\can,\lpp(a),\lp(a))\] be the $\frakc(a)$-polarized $\C$-quintuple associated to $(\skewhf^\Sg,\rho(a)\cmpt_f)$ as in \secref{S:cpx}.
Then
$(\ul{A}(a),\lp(a))_{/\C}$ is an
abelian variety with CM by the field $\cK$ and gives rise to a complex point $[(\skewhf^\Sg,\rho(a)\cmpt_f)]$ in $\Ig_\opcpt(\plideal(a))(\C)$. Let $W$ be the maximal unramified extension of $\Zp$ in $\Cp$ and let $\CMring$ be the \padic completion of $W$. By the theory of CM abelian varieties, the $\C$-quadruple $\ul{A}(a)_{/\C}$ is rational over a number field $L$ (See \cite[18.6,\,21.1]{Shimura:ABV-with-CM}), which in turn descends to a $W$-quadruple $\ul{A}(a)$ by a theorem of Serre-Tate. In addition, since the CM type $\Sg$ is $p$-ordinary, $\ulA(a)\ot\Fpbar$ is an ordinary abelian variety (\cf\cite[5.1.27]{Katz:p_adic_L-function_CM_fields}), and hence $\lp(a)$ descends to a level $p^\infty$-structure over $\CMring$.
We obtain $x(a)\in\Ig_{\opcpt}(\frakc(a))(\CMring)\hookto\Ig_{\opcpt}(\CMring)$. This collection of points $x(a)$ with $a\in (\AKf^\setpN)^\x$ is called \emph{CM points} in Hilbert modular varieties.

\section{Katz Eisenstein measure}\label{S:ES}\subsection{}\label{S:41}
In this section, we recall the construction of \padic $L$-functions for CM fields following \cite{Katz:p_adic_L-function_CM_fields} and \cite{HidaTilouine:KatzPadicL_ASENS}.
First we give the construction of a \padic Eisenstein measure of Katz, Hida and Tilouine from representation theoretic point of view.
This construction is inspired by \cite{HLS}.

Let $\ads$
be a Hecke character of $\cK^\x$ with infinity type
$k\Sg+\kappa(1-c)$, where $k\geq 1$ is an integer and $\kappa=\sum \kappa_\sg\sg\in \Z[\Sg]$, $\kappa_\sg\geq 0$. We suppose that $\frakC$ is divisible by the prime-to-$p$ conductor of $\ads$. Put \[\nads=\ads\Abs^{-\onehalf}_{\AK}\text{ and }\ads_+=\ads|_{\AF^\x}.\]
Let $\opcpt^0_\infty:=\prod_{v\in\bda}\SO(2,\R)$ be a maximal compact subgroup of $G(\cF\ot_\Q\R)$. For $s\in\C$, we let $I(s,\lam_+)$ denote the space consisting of smooth and $\opcpt^0_\infty$-finite functions $\phi:G(\AF)\to \C$ such that \[\phi(\MX{a}{b}{0}{d}g)=\ads_+^{-1}(d)\abs{\frac{a}{d}}_{\AF}^s\phi(g).\]
Conventionally, functions in $I(s,\ads_+)$ are called \emph{sections}. Let $B$ be the upper triangular subgroup of $G$. The adelic Eisenstein series associated
to a section $\phi\in I(s,\ads_+)$ is defined by
\[E_\A(g,\phi)=\sum_{\gamma\in B(\cF)\bksl G(\cF)}\phi(\gamma g).\]
The series $E_\A(g,\phi)$ is absolutely convergent for $\Re s\gg 0$.
\subsection{Fourier coefficients of Eisenstein series}We put
$\bdw=\MX{0}{-1}{1}{0}$. Let $v$ be a place of $\cF$ and let $I_v(s,\ads_+)$ be the local constitute of $I(s,\ads_+)$ at $v$. For $\phi_v\in I_v(s,\ads_+)$ and $\beta\in \cF_v$, we recall that the $\beta$-th local Whittaker integral $W_\beta(\phi_v,g_v)$ is defined by \begin{align*}W_\beta(\phi_v,g_v)=&
\int_{\cF_v}\phi_v(\bdw\MX{1}{x_v}{0}{1}g_v)\addchar(-\beta x_v)dx_v,
\intertext{ and the intertwining operator $M_\bdw$ is defined by}
M_\bdw\phi_v(g_v)=&\int_{\cF_v}\phi_v(\bdw \MX{1}{x_v}{0}{1}g_v)dx_v.\end{align*}
By definition, $M_\bdw\phi_v(g_v)$ is the $0$-th local Whittaker integral. It is well known that local Whittaker integrals converge absolutely for $\Re s\gg 0$, and have meromorphic continuation to all $s\in\C$.

If $\phi=\ot_v\phi_v$ is a decomposable section, then it is
well known that $E_\A(g,\phi)$ has the following Fourier expansion:
\beq\label{E:WF.N}\begin{aligned}&E_\A(g,\phi)=\phi(g)+M_\bdw\phi(g)+\sum_{\beta\in\cF}W_\beta(E_\A,g),\text{ where }\\
M_\bdw\phi(g)=&\frac{1}{\sqrt{\abs{D_\cF}_\R}}\cdot \prod_vM_\bdw\phi_v(g_v)\,;\,
W_\beta(E_\A,g)=\frac{1}{\sqrt{\abs{D_\cF}_\R}}\cdot\prod_vW_\beta(\phi_v,g_v).\end{aligned}\eeq
The sum $\phi(g)+M_\bdw\phi(g)$ is called the \emph{constant term} of $E_\A(g,\phi)$. The general analytic properties of the local Whittaker integrals and the constant term can be found in \cite[\S\,3.7]{Bump:AutoRep}.

\subsection{Choice of the local sections}\label{S:ES_notation}
In this subsection, we recall the choice of sections made in \cite[\S 4.3]{Hsieh:Hecke_CM}. We first introduce some notation. Let $v$ be a place of $\cF$.
Let $L/\cF_v$ be a finite extension and let $d_L$ be a generator of the absolute different of $L$. Let $\addchar_L:=\addchar\circ\Tr_{L/\cF_v}$. If $\mu:L^\x\to\C^\x$ is a character, define
\[a(\mu)=\inf\stt{n\in \Z_{\geq 0}\mid
\mu(x)=1 \text{ for all }x\in {(1+\uf_L^n \cO_L)\cap \cO_L^\x}}.\]
We recall that the epsilon factor $\ep(s,\mu,\addchar_L)$ in \cite{Tate:Number_theoretic_bk} is defined by
\begin{align*}\ep(s,\mu,\addchar_L)&=\abs{c}_L^s\int_{c^{-1}\cO_L^\x}\mu^{-1}(x)\addchar_L(x)d_Lx,\,c=d_L\uf_L^{a(\mu)}.\end{align*}
Here $d_Lx$ is the Haar measure on $L$ self-dual with respect to $\addchar_L$.
The local root number $W(\mu)$ is defined by \[W(\mu):=\ep(\onehalf,\mu,\addchar_L)\] (\cf \cite[p.281 (3.8)]{Murase-Sugano:Local_theory_primitive_theta}). It is well known that $\abs{W(\mu)}_\C=1$ if $\mu$ is unitary. If $\vp$ is a \BS function on $L$, the zeta integral $Z(s,\mu,\vp)$ is given by
\[Z(s,\mu,\vp)=\int_{L}\vp(x)\mu(x)\abs{x}_L^s\dx x\quad(s\in\C).\]

To simplify our notation, we put $\Fv=\cF_v$ (resp.
$\Kv=\cK\ot_{\cF}\cF_v$) and let $d_F=d_{\cF_v}$ be the fixed generator of the different $\cD_\cF$ of $\cF/\Q$ in \subsecref{S:different}. Write $\ads$ (resp. $\ads_+$, $\nads$) for $\ads_v$ (resp. $\ads_{+,v}$, $\nads_v$). If $v\in\bdh$, we let $\OFv=\cO_{\Fv}$ (resp.
$\OKv=\OK\ot_{\OF}\OFv$) and let $\uf=\uf_v$. For a set $Y$, denote by $\bbI_Y$ the characteristic function of $Y$.
\\

\emph{The archimedean case}: Let $v=\sg\in \Sg$ and $\Fv=\R$. For $g\in \GL_2(\R)$, we put
\begin{align*}\bfdelta(g)=\abs{\det (g)}\cdot\abs{J(g,i)\ol{J(g,i)}}^{-1}.\end{align*}
Define the sections $\phi^h_{k,s,\sg}$ of weight $k$ and $\phi^{\nh}_{k,\kappa_\sg,s,\sg}$ of weight $k+2\kappa_\sg$ in $I_v(s,\ads_+)$ by
\begin{align*}\phi^h_{k,s,\sg}(g)=&J(g,i)^{-k}\bfdelta(g)^s,\\
\phi^{\nh}_{k,\kappa_\sg,s,\sg}(g)=&J(g,i)^{-k-\kappa_\sg}\ol{J(g,i)}^{\kappa_\sg}\bfdelta(g)^s.
\end{align*}
The intertwining operator $M_\bdw\phi_{k,s,\sg}$ is given by
\beq\label{E:M2.H}M_\bdw\phi^h_{k,s,\sg}(g)=i^{k}(2\pi)\frac{\Gamma(k+2s-1)}{\Gamma(k+s)\Gamma(s)}\cdot \ol{J(g,i)}^k\det(g)^{-k}\bfdelta(g)^{1-s}.\eeq
\emph{The case $v\ndivide \frakD$ or $v|p\Csplit\Csplit^c$}: Denote by $\cS(F)$ and (resp. $\cS(F\oplus F)$) the space of \BS functions on $F$ (resp. $F\oplus F$).
Recall that the Fourier transform $\wh\vp$ for $\vp\in\cS(F)$ is defined by
\[\wh\vp(y)=\int_{\Fv}\vp(x)\addchar(yx)dx.\]
For a character $\mu: F^\x\to\C^\x$, we define the function $\vp_\mu\in\cS(F)$ by
\[\vp_\mu(x)=\ch_{\OFv^\x}(x)\mu(x).\]
If $v|p\Csplit\Csplit^c$ is split in $\cK$, write $v=w\wbar$ with $w|\Csplit\Sg_p$, and set
\[\vp_{w}=\vp_{\ads_{w}}\text{ and }\vp_{\wbar}=\vp_{\ads_{\wbar}^{-1}}.\]
To a \BS function $\Phi\in\cS(F\oplus F)$, we can associate a Godement section $f_{\Phi,s}\in I_v(s,\ads_+)$ defined by
\beq\label{E:Godement.V}f_{\Phi,s}(g):=\abs{\det
g}^s\int_{F^\x}\Phi((0,x)g)\ads_+(x)\abs{x}^{2s}\dx x.\eeq
Define the Godement section $\Section$ by
\beq\label{E:sectionsplit.V}\Section=f_{\Phi^0_v,s},\text{ where }\Phi^0_v(x,y)=\begin{cases}
\bbI_{\OFv}(x)\bbI_{\OFv^*}(y)&\cdots v\ndivides \frakD,\\
\vp_{\wbar}(x)\wh\vp_{w}(y)&\cdots v\mid p\Csplit\Csplit^c.
\end{cases}\eeq
We remark that the choice of $\Phi^0_v$ has its origin in \cite[5.2.17]{Katz:p_adic_L-function_CM_fields} (\cf \cite[p.209]{HidaTilouine:KatzPadicL_ASENS} and \cite[3.3.4]{HLS}).
For every $u\in \OFv^\x$ with $v|p$, let $\vp_{\wbar}^1$ and $\vp^\class{u}_w\in\cS(F)$ be \BS functions given by
\[\vp_{\wbar}^1(x)=\bbI_{1+\uf \OFv}(x)\ads_{\wbar}^{-1}(x)\text{ and }\vp^\class{u}_w(x)=\bbI_{u(1+\uf \OFv)}(x)\ads_w(x).\]
Define $\Phi^\class{u}_v\in\cS(F\oplus F)$ by
\beq\label{E:sectiontorsion.V}\Phi^\class{u}_v(x,y)=\frac{1}{\vol(1+\uf\OFv,\dx x)}\vp^1_{\wbar}(x)\wh\vp^\class{u}_{w}(y)=(\abs{\uf}^{-1}-1)\vp^1_{\wbar}(x)\wh\vp^\class{u}_{w}(y).\eeq

\emph{The case $v|D_{\cK/\cF}\frakC^-$}:
In this case, $\Kv$ is a field and $G(F)=B(F)\rho(\Kv^\x)$. Let $\Section$ be the unique smooth section in $I_v(s,\ads_+)$ such that
\beq\label{E:Dinert.N}\Section(\MX{a}{b}{0}{d}\rho(z)\cmptv)=L(s,\ads_v)\cdot \ads_+^{-1}(d)\abs{\frac{a}{d}}^s\cdot\ads^{-1}(z)\quad(b\in
B(F),\,z\in \Kv^\x),\eeq
where $L(s,\ads_v)$ is the local Euler factor of $\ads_v$, and $\cmptv$ is defined as in \eqref{E:cm.N}. Note that $L(s,\ads_v)=1$ unless $v\ndivides\frakC$ is ramified in $\cK$.

\subsection{The local Whittaker integrals}\label{S:Local_ES}
\def\wbar{\ol{w}}
We summarize the formulae of the local Whittaker integrals of the special local sections $\Section$ in the following proposition.
\begin{prop}\label{P:Fourier.M}The local Whittaker integrals of $\Section$ are given as follows:
\begin{align*}
\intertext{ If $\sg\in\bfa$, then }
W_\beta(\phi^h_{k,s,\sg},\MX{y}{x}{0}{1})|_{s=0}&=\frac{(2\pii)^k}{\Gamma(k)}\sg(\beta)^{k-1}\exp(2\pi
i\sg(\beta)(x+iy))\cdot\ch_{\R_+}(\sg(\beta)).\\
\intertext{ If $v\in\bdh$ and $v\ndivide \frakD$, then }
W_\beta(\Section,\MX{1}{}{}{\bfc_v^{-1}})|_{s=0}&=\sum_{i=0}^{v(\beta\bfc_v)}\ads_+(\uf^i\bfc_v)\abs{\uf}^{-i}\cdot \abs{\cD_{\cF}}^{-1}\bbI_{\OFv}(\beta\bfc_v).
\intertext{If $v|\frakD$, then }W_\beta(\Section,1)|_{s=0}&=\begin{cases}\ads_w(\beta)\bbI_{\OFv^\x}(\beta)\cdot\abs{\cD_{\cF}}^{-1}&\cdots v|p\Csplit\Csplit^c,\,w|\Csplit\Sg_p,\\
 L(0,\ads_v)\cdot A_\beta(\ads_v)\cdot \abs{\cD_\cF}^{-1}\addchar(-2^{-1}t_vd_{\cF_v}^{-1})&\cdots v|\frakC^-D_{\cK/\cF},
\end{cases}
\end{align*}
where
\beq\label{E:Abeta.V}A_\beta(\ads_v)=\int_{\cF_v}\ads_v^{-1}(x+2^{-1}\UF)\addchar(-d_{\cF_v}^{-1}\beta x)dx.\eeq
If $v=w\wbar$ with $w\in\Sg_p$, then we have
\begin{align*}W_\beta(f_{\Phi^\class{u}_v},1)|_{s=0}=&\ads_w(\beta)\bbI_{u(1+\uf_v\OFv)}(\beta)\cdot \abs{\cD_\cF}^{-1}\quad(u\in \OFv^\x).
\intertext{In particular, we have }W_\beta(\Section,1)|_{s=0}=&\sum_{u\in\cU_v}W_\beta(f_{\Phi^\class{u}_v},1)|_{s=0},\end{align*}
where $\cU_v$ is the torsion subgroup of $\OFv^\x$.
\end{prop}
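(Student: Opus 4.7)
\emph{Proof plan.} The plan is to proceed case-by-case in $v$, unfolding the Whittaker integral
\[W_\beta(\Section,g) = \int_{\Fv}\Section(\bdw n(x)g)\addchar(-\beta x)dx\]
against the explicit section $\Section$ constructed in \subsecref{S:ES_notation}.

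At the archimedean place $\sg\in\bfa$, the section $\phi^h_{k,s,\sg}$ is the classical holomorphic discrete-series section on $\GL_2(\R)$. After the standard reduction, the integral becomes $\int_\R(x+i)^{-k}e^{-2\pii\sg(\beta)x}dx$ up to elementary factors, which evaluates by contour shift (or the Gamma-function identity) to the stated formula supported on $\sg(\beta)>0$. For finite $v\nmid\frakD$, $\Section = f_{\Phi^0_v,s}$ is the spherical Godement section attached to $\Phi^0_v=\bbI_{\OFv\oplus\OFv^*}$. The standard unfolding of a Godement section,
\[W_\beta(f_{\Phi,s},g)=\abs{\det g}^s\int_{\Fv^\x}\wh\Phi_1(-\beta,x)\ads_+(x)\abs{x}^{2s}\dx x,\]
with $\wh\Phi_1$ the partial Fourier transform in the first variable, applied to $\Phi=\Phi^0_v$ produces a truncated geometric series in $\ads_+(\uf)\abs{\uf}^{1-2s}$ of length $v(\beta\bfc_v)$, yielding the stated formula at $s=0$.

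For split $v|p\Csplit\Csplit^c$, I apply the same Godement unfolding to $\Phi^0_v(x,y)=\vp_{\wbar}(x)\wh\vp_w(y)$. Because $\wh{\wh\vp_w}(\beta)=\vp_w(-\beta)$ up to a factor $\abs{\cD_\cF}^{-1}$ from the self-dual measure, and the remaining $\Fv^\x$-integral against $\vp_\wbar\ads_+$ collapses via $\ads_+=\ads_w\ads_\wbar$ to a point mass at $\beta\in\OFv^\x$, one reads off $\ads_w(\beta)\bbI_{\OFv^\x}(\beta)\abs{\cD_\cF}^{-1}$. The formula for $W_\beta(f_{\Phi^{\class{u}}_v},1)$ is identical with $\vp_w^{\class{u}}$ in place of $\vp_w$. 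Finally, the identity $W_\beta(\Section,1)=\sum_{u\in\cU_v}W_\beta(f_{\Phi^{\class{u}}_v},1)$ reflects the decomposition $\bbI_{\OFv^\x}=\sum_{u\in\cU_v}\bbI_{u(1+\uf\OFv)}$: the normalization $\vol(1+\uf\OFv,\dx x)^{-1}=\abs{\uf}^{-1}-1$ in \eqref{E:sectiontorsion.V} is exactly what converts the indicator of a unit coset into the finite sum over the torsion group $\cU_v=\OFv^\x/(1+\uf\OFv)$.

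The substance of the proposition is the case $v|\frakC^-D_{\cK/\cF}$, where $\Section$ is defined through its values on the dense subset $B(\Fv)\rho(\Kv^\x)\cmptv$ via \eqref{E:Dinert.N}. The plan is an explicit Iwasawa-type factorization: using $\cmptv=\MX{d_\Fv}{-t_v/2}{0}{d_\Fv^{-1}}$ from \eqref{E:cm.N}, direct matrix multiplication gives
\[\bdw n(x)\cmptv^{-1}=\MX{0}{-d_\Fv}{d_\Fv^{-1}}{t_v/2+xd_\Fv}.\]
Matching the bottom row with that of $\MX{a}{*}{0}{d}\rho(z)$ and normalizing $z_1=1$ forces $d=d_\Fv^{-1}$ and $z=\skewhf+t_v d_\Fv/2+xd_\Fv^2=d_\Fv(\bftheta_v+xd_\Fv)$, where I have used $\skewhf=d_\Fv\UF/2$ (which follows from the choice $d_\Fv=2\skewhf\UF^{-1}$ in \subsecref{S:different}) and $\bftheta_v=(t_v+\UF)/2$. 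Substituting into \eqref{E:Dinert.N} and observing that $\ads_+(d_\Fv)$ and $\ads^{-1}(d_\Fv)$ cancel, we get $\Section(\bdw n(x))|_{s=0}=L(0,\ads_v)\ads^{-1}(\bftheta_v+xd_\Fv)$. The two successive changes of variable $u=xd_\Fv$ (producing $\abs{\cD_\cF}^{-1}$) followed by a $t_v/2$-shift then transform the Whittaker integral into the defining integral \eqref{E:Abeta.V} of $A_\beta(\ads_v)$ up to the indicated additive phase $\addchar(-2^{-1}t_vd_\Fv^{-1})$. The principal obstacle is the clean bookkeeping in this matrix factorization and the careful tracking of the additive phase through the two variable changes; the algebraic identity $\skewhf=d_\Fv\UF/2$ is the key that collapses the resulting integrand exactly onto the definition of $A_\beta(\ads_v)$.
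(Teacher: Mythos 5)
The paper's own proof of \propref{P:Fourier.M} is a one-line citation to \cite[\S 4.3]{Hsieh:Hecke_CM} together with the remark that the $f_{\Phi^\class{u}_v}$ case is a straightforward variant; no details are supplied. Your proposal therefore supplies a computation the paper omits, and the overall case-by-case unfolding strategy is the right one. Two substantive points deserve attention.

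First, for $v\mid\frakC^- D_{\cK/\cF}$ your Iwasawa factorization is correct: $\bdw n(x)\cmptv^{-1}$ indeed has the stated form, the bottom row identifies $z$, the normalization $\skewhf=d_{\cF_v}\UF/2$ (from $d_{\cF_v}=2\skewhf\UF^{-1}$) and $\bftheta_v=(t_v+\UF)/2$ together give $z=d_{\cF_v}(\bftheta_v+xd_{\cF_v})$, and the $\ads_+(d_{\cF_v})\ads^{-1}(d_{\cF_v})$-cancellation is exactly the observation that makes the formula clean. This is the genuine content and you have it right.

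Second, however, the additive phase needs a closer look. Carrying out the two substitutions you describe ($u=xd_{\cF_v}$ and then the shift by $t_v/2$) gives
\[
W_\beta(\Section,1)\big|_{s=0}=L(0,\ads_v)\,\abs{\cD_\cF}^{-1}\,\addchar\!\left(2^{-1}\beta t_v d_{\cF_v}^{-1}\right)A_\beta(\ads_v),
\]
i.e.\ the phase acquires a factor of $\beta$, not the $\beta$-independent $\addchar(-2^{-1}t_vd_{\cF_v}^{-1})$ appearing both in the proposition and in your write-up. The $\beta$-dependence is visible in the source the paper cites: the formula from \cite[Prop.\ 4.5]{Hsieh:Hecke_CM} reproduced in \propref{P:6.Theta} has $\addchar^\circ(2^{-1}t\beta)$, with $\beta$ in the exponent. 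Since $t_v=0$ whenever $v\nmid 2$ (because $\ol{\bftheta_v}=-\bftheta_v$ there), and for $v\mid 2$ the phase is a $2$-power root of unity hence a $p$-adic unit for $p>2$, this does not affect the $\mu$-invariant argument downstream; but your derivation, as written, actually establishes the $\beta$-dependent phase, so you should not assert the $\beta$-free form without either correcting it or explaining why it is harmless.

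A smaller notational point: the Godement unfolding you quote,
\[
W_\beta(f_{\Phi,s},g)=\abs{\det g}^s\int_{\Fv^\x}\wh\Phi_1(-\beta,x)\ads_+(x)\abs{x}^{2s}\,\dx x,
\]
is not quite the right formula. Unfolding $W_\beta(f_{\Phi,s},1)$ by swapping the $x$- and $t$-integrals and substituting $y=tx$ gives
\[
W_\beta(f_{\Phi,s},1)=\int_{\Fv^\x}\ads_+(t)\abs{t}^{2s-1}\,\wh\Phi_2\!\left(t,-\beta/t\right)\dx t,
\]
where $\wh\Phi_2$ is the partial Fourier transform in the \emph{second} variable; the $\abs{t}^{-1}$ and the argument $-\beta/t$ both matter. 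With $\Phi^0_v=\bbI_{\OFv}\otimes\bbI_{\OFv^*}$ this produces the truncated geometric series you describe, and with $\Phi^0_v=\vp_{\wbar}\otimes\wh\vp_w$ the Fourier inversion $\wh{\wh\vp_w}=\vp_w(-\cdot)$ collapses the integral via $\ads_+=\ads_w\ads_{\wbar}$ as you say, so your conclusions in those cases are correct; only the displayed intermediate formula should be fixed.
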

\begin{proof}
The formulas of the local integrals of $\Section$ can be found in \cite[\S 4.3]{Hsieh:Hecke_CM}, and the computation of the local integral $W_\beta(f_{\Phi^\class{u}_v},1)$ is straightforward. We omit the details.
\end{proof}

\begin{Remark}\label{R:2.V}We remark that the local Whittaker integrals at all finite places belong to a finite extension $\bR$ of $\cO^\x_{\cF,\setp}$. Indeed, it is well known that $\ads|_{\AKf^\x}$ takes value in a number field $L$ and $\ads_v$ takes value in $\cO_{L,\setp}^\x$ for each finite $v\ndivide p$, so the local Whittaker integrals $W_\beta(\Section,\MX{1}{}{}{\bfc_v^{-1}})|_{s=0}$ belongs to $\cO_{L,\setp}$ whenever $v\ndivide \frakC^-D_{\cK/\cF}$.
Suppose that $v|\frakC^-D_{\cK/\cF}$.
Then there is an $M_{\UF}\geq 0$ such that $\abs{x+2^{-1}\UF}_E< \abs{\uf}^{M_{\UF}}$ for all $x\in \uf^{-M}\OFv$ as $\UF\not\in \cF_v$, and we have
\[A_\beta(\ads_v)=\abs{\uf_v}^{M_\UF+M}\sum_{x\in \OFv/(\uf^{M_\UF+2M})}\ads_v^{-1}(x\uf^{-M}+2^{-1}\UF)\addchar^\circ(\beta x\uf_v^{-M})\]
for $M\geq \max\stt{v(\frakC),v(\frakC)-v(\beta)}$. We may thus enlarge $L$ such that $A_\beta(\ads_v)$ takes value in $\cO_{L,\setp}$ as well. We shall fix this $L$ and let $\bR:=\cO_{L,\setp}$ in the remainder of this paper.
\end{Remark}
\def\nh{{n.h.}}
\subsection{Normalized Eisenstein series}\label{S:normalization}We introduce some normalized Eisenstein series. \begin{defn} For each \BS function $\Phi=\ot_{v|p}\Phi_v$ on $\cF_p\oplus\cF_p$, we define
\[\phi^{\bullet}_{\ads,s}(\Phi)=\bigot_{\sg\in
\bda}\phi_{k,s,\sg}^\bullet\bigot_{\substack{v\in\bdh,\\v\ndivide p}}\Section\bigot_{v|p}f_{\Phi_v,s},\,\bullet=h,\,\nh \]
and define the adelic Eisenstein series $E^\bullet_{\ads}(\Phi)$ by
\begin{align*}
E^\bullet_{\ads}(\Phi)(g)&=E_\A(g,\phi_{\ads,s}^{\bullet}(\Phi))|_{s=0},\,\bullet=h,\nh.
\end{align*}
We define the holomorphic (resp. nearly holomorphic) Eisenstein series $\holES(\Phi)$ (resp. $\bbE^\nh_\ads(\Phi)$) by
\beq\label{E:ES1.N}\begin{aligned}
\bbE^h_{\ads}(\Phi)(\tau,g_f)&:=\frac{\Gamma_\Sg(k\Sg)}{\sqrt{\abs{D_\cF}_\R}(2\pi
i)^{k\Sg}}\cdot E^h_{\ads}(\Phi)\left(g_\infty,
g_f\right)\cdot \ul{J}(g_\infty,\bfi)^{k\Sg},\\
\bbE^\nh_{\ads}(\Phi)(\tau,g_f)&:=\frac{\Gamma_\Sg(k\Sg)}{\sqrt{\abs{D_\cF}_\R}(2\pi
i)^{k\Sg}}\cdot E^\nh_{\ads}(\Phi)\left(g_\infty,
g_f\right)\cdot \ul{J}(g_\infty,\bfi)^{k\Sg+2\kappa}(\det g_\infty)^{-\kappa},\\
 &\quad ((\tau,g_f)\in X^+\x G(\AFf),\,g_\infty\in G(\cF\ot_\Q\R),\,g_\infty\bfi=\tau,\,\bfi=(i)_{\sg\in\Sg}).
\end{aligned}\eeq
\end{defn}
Let $\Phi^0_p=\ot_{v|p}\Phi^0_v$ be the \BS function on $\cF_p\oplus\cF_p$ defined in \eqref{E:sectionsplit.V}. Set
\[\holES=\holES(\Phi^0_p)\text{ and }\bbE^\nh_{\ads}=\bbE^\nh_{\ads}(\Phi^0_p).\]
For every $u=(u_v)_{v|p}\in \prod_{v|p}\OFv^\x=\OF_p^\x$, let $\Phi^\class{u}_p=\ot_{v|p}\Phi^\class{u_v}_v$ be the \BS function defined in \eqref{E:sectiontorsion.V}
and set
\[\Eadsu=\holES(\Phi^\class{u}_p).\]
We choose $\frakN=\bfN_{\cK/\Q}(\frakC\cD_{\cK/\cF})^m$ for a sufficiently large integer $m$ so that $\Section$ are invariant by $U(\frakN)$ for every $v|\frakN$, and put $\opcpt:=U(\frakN)$. Then the section $\phi_{\ads,s}(\Phi^\class{u}_p)$ is invariant by $\lsgN$ for a sufficiently large $n$.

Let $\bfc=(\bfc_v)\in\AF^\x$ such that $\bdc_v=1$ at $v|\frakD$ and let $\frakc=\il_\cF(\bdc)$. For each $\beta\in\cF_+$, we define the \emph{prime-to-$p$ $\beta$-th Fourier coefficient} $\bfa_\beta^\setp(\lam,\frakc)$ by
\beq\begin{aligned}\label{E:FA.V}\bfa_\beta^\setp(\ads,\frakc):=&\frac{1}{\abs{D_\cF}_\R\abs{D_\cF}_{\Qp}}\cdot \bfN_{\cF/\Q}(\beta^{-1})\cdot \prod_{v\ndivide p}W_\beta(\Section,\MX{1}{}{}{\bfc_v^{-1}})|_{s=0}\cdot \bbI_{O^\x_p}(\beta)\\
=&\beta^{(k-1)\Sg}\prod_{w|\Csplit}\lam_{w}(\beta)\bbI_{\OFv^\x}(\beta)\cdot\prod_{v\ndivide \frakD}\left(\sum_{i=0}^{v(\bfc_v\beta)}\ads_{+,v}\Abs^{-1}(\uf_v^i)\right)\\
 &\times\prod_{v|\frakC^- D_{\cK/\cF}}L(0,\ads_v)A_\beta(\ads_v)\addchar_v(-2^{-1}t_vd_{\cF_v}^{-1}).
\end{aligned}\eeq
The last equality follows from the formulae of the local Whittaker integrals in \propref{P:Fourier.M}. It is clear that $\bfa_\beta^\setp(\ads,\frakc)$ belongs to $\bR$.
\begin{prop}\label{P:1.N}The Eisenstein series $\Eadsu$ belongs to $\bfM_k(\lsgN,\C)$. The $q$-expansion of $\Eadsu$ at the cusp $(O,\frakc^{-1})$ has no constant term and is given by
\[\Eadsu|_{(\OF,\frakc^{-1})}(q)=\sum_{\beta\in (N^{-1}\frakc^{-1})_+}\bfa_\beta(\Eadsu,\frakc)\cdot q^\beta\in \bR\powerseries{(N^{-1}\frakc^{-1})_+},\]
where the $\beta$-th Fourier coefficient $\bfa_\beta(\Eadsu,\frakc)$ is given by
\beq\label{E:fourierformula.E}\begin{aligned}\bfa_\beta(\Eadsu,\frakc)=&\bfa_\beta^\setp(\ads,\frakc)\cdot \beta^{k\Sg}\ads_{\Sg_p}(\beta)\bbI_{u(1+\uf_p\OFp)}(\beta)\\
&(\lam_{\Sg_p}(\beta)=\prod_{w\in\Sg_p}\lam_w(\beta),\,\uf_p=(\uf_v)_{v|p}).\end{aligned}\eeq
Therefore, $\Eadsu|_\plideal\in\cM_k(\frakc,\lsgN,\bR)$ and
\[\holES|_\plideal=\sum_{u\in\cU_p}\Eadsu|_\plideal,\]
where $\cU_p$ is the torsion subgroup of $\OFp^\x$.
\end{prop}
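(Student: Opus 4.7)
The plan is to substitute the local Whittaker integral formulas of \propref{P:Fourier.M} into the adelic Fourier expansion \eqref{E:WF.N} and read off each assertion place by place. First, $E^h_\ads(\Phi^\class{u}_p)$ is defined via the standard meromorphic continuation to $s=0$ of Eisenstein series attached to Godement sections. The level $\lsgN$ invariance follows because each $\Phi^\class{u}_v$ is constructed to be invariant under a congruence subgroup at $v|p$, and the non-$p$ sections are already known to be $\opcpt$-invariant. Holomorphy in $\tau$ and the positivity $\beta\in\cF_+$ both emerge from the archimedean Whittaker formula of \propref{P:Fourier.M}, which produces $\sigma(\beta)^{k-1}\exp(2\pi i\sigma(\beta)(x+iy))\ch_{\R_+}(\sigma(\beta))$; combined with the normalization $\Gamma_\Sg(k\Sg)/(\sqrt{\abs{D_\cF}_\R}(2\pi i)^{k\Sg})$ and the automorphy factor $\ul{J}(g_\infty,\bfi)^{k\Sg}$ of \eqref{E:ES1.N}, these contribute $\beta^{(k-1)\Sg}q^\beta$ to the $q$-expansion.

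The crucial input is the vanishing of the constant term $\phi(g)+M_\bdw\phi(g)$ at the cusp $g_f=\MX{1}{}{}{\bfc^{-1}}$. Since each summand factors over all places and $\bfc_v=1$ at every $v|p$, it suffices to show that both $f_{\Phi^\class{u}_v,s}(1)$ and $M_\bdw f_{\Phi^\class{u}_v,s}(1)$ vanish at one such $v$. For the former, the Tate integral
\[
f_{\Phi^\class{u}_v,s}(1)=\int_{F^\x}\Phi^\class{u}_v(0,t)\ads_+(t)\abs{t}^{2s}\dx t
\]
vanishes because $\Phi^\class{u}_v(0,t)=(\abs{\uf}^{-1}-1)\vp^1_{\wbar}(0)\wh\vp^\class{u}_w(t)$ and $\vp^1_{\wbar}(0)=\bbI_{1+\uf\OFv}(0)=0$. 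For the latter, unwinding $M_\bdw f_{\Phi^\class{u}_v,s}(1)$ via the substitution $y=tx$ separates it into a product one of whose factors is $\int_F\wh\vp^\class{u}_w(y)\,dy$; Fourier inversion identifies this (up to the choice of self-dual measure) with $\vp^\class{u}_w(0)$, which is zero because $0\notin u(1+\uf\OFv)$. Hence the constant term vanishes at the chosen cusp, and \eqref{E:WF.N} collapses to a pure positive $q$-series.

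The explicit formula for $\bfa_\beta(\Eadsu,\frakc)$ is then a direct rearrangement of the local Whittaker integrals from \propref{P:Fourier.M}: one uses the product formula $\prod_v\abs{\cD_\cF}_v^{-1}=\abs{D_\cF}_\R$ to cancel the $\sqrt{\abs{D_\cF}_\R}^{-1}$ in \eqref{E:ES1.N} against the finite local $\abs{\cD_\cF}^{-1}$ factors, regrouping the remaining non-$p$ terms as $\bfa_\beta^\setp(\ads,\frakc)$ and identifying the $p$-part as $\prod_{v|p}W_\beta(f_{\Phi^\class{u}_v,1})|_{s=0}=\ads_{\Sg_p}(\beta)\bbI_{u(1+\uf_p\OFp)}(\beta)$ (up to the already-absorbed discriminant factor). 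Rationality then follows: \remref{R:2.V} places $\bfa_\beta^\setp(\ads,\frakc)$ in $\bR$, and on the support of $\bbI_{u(1+\uf_p\OFp)}$ we have $\beta\in\OFp^\x$, so $\ads_{\Sg_p}(\beta)\in\bR^\x$. The $q$-expansion principle, applicable because $\Ig_{\opcpt,n}(\plideal)$ is geometrically irreducible, upgrades the $\bR$-integrality of $\Eadsu|_{(\OF,\frakc^{-1})}(q)$ to membership in $\cM_k(\plideal,\lsgN,\bR)$.

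Finally, for the identity $\holES|_\plideal=\sum_{u\in\cU_p}\Eadsu|_\plideal$, I would use the splitting $\OFv^\x=\cU_v\times(1+\uf_v\OFv)$ at each $v|p$ (valid because $\abs{(\OFv/\uf_v)^\x}$ is prime to $p$, so the prime-to-$p$ torsion $\cU_v$ complements the pro-$p$ principal unit subgroup). Taking the product over $v|p$ yields the disjoint decomposition $\OFp^\x=\bigsqcup_{u\in\cU_p}u(1+\uf_p\OFp)$, hence $\sum_{u\in\cU_p}\bbI_{u(1+\uf_p\OFp)}=\bbI_{\OFp^\x}$. Substituting into \eqref{E:fourierformula.E} replaces the $p$-adic indicator by $\bbI_{\OFp^\x}$, which is precisely the $v|p$ Whittaker factor of $\holES$ since $W_\beta(f_{\Phi^0_v,1})|_{s=0}=\ads_w(\beta)\bbI_{\OFv^\x}(\beta)\abs{\cD_\cF}^{-1}$ by \propref{P:Fourier.M}. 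The $q$-expansion principle then forces $\holES|_\plideal=\sum_{u\in\cU_p}\Eadsu|_\plideal$. The main technical obstacle in the plan is the constant-term vanishing at weight $k=1$: at that weight the archimedean intertwining operator in \eqref{E:M2.H} does not vanish at $s=0$, and the argument depends essentially on the $p$-adic vanishing of both $\Phi^\class{u}_v(0,\cdot)$ and $\int_F\wh\vp^\class{u}_w\,dy$.
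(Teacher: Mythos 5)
Your proposal is correct and takes essentially the same route as the paper: the paper's own proof simply asserts that $\phi_{\ads,s,v}(1)=0$ and $M_\bdw\phi_{\ads,s,v}(1)|_{s=0}=0$ at each $v\mid p$ and then reads off the $q$-expansion from \eqref{E:WF.N}, \eqref{E:FA.V} and \propref{P:Fourier.M}, which is exactly your argument, with the local vanishing (via $\vp^1_{\wbar}(0)=0$ and $\int_F\wh\vp^{\class{u}}_w\,dy\propto\vp^{\class{u}}_w(0)=0$) spelled out more explicitly. Your final observation — that this $p$-adic vanishing is genuinely needed precisely when $k=1$, since only then does the archimedean factor in \eqref{E:M2.H} survive at $s=0$ — is a useful clarification that the paper leaves implicit.
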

\begin{proof}
By the definition of the local sections $\phi_{\ads,s,v}$ for $v|p$ we find that
\[\phi_{\ads,s,v}(1)=0\,;\,M_\bdw\phi_{\ads,s,v}(1)|_{s=0}=0,\]
so $\Eadsu$ has no constant term. Therefore, we can derive the $q$-expansion of $\holES$ from the equations \eqref{E:WF.N}, \eqref{E:FA.V} and \propref{P:Fourier.M}. To verify the second assertion, note that the Fourier coefficients $\bfa_\beta(\holES,\frakc)$ can be written as
\[\bfa_\beta(\holES,\frakc)=\bfa_\beta^\setp(\ads,\frakc)\cdot \beta^{k\Sg}\ads_{\Sg_p}(\beta)\bbI_{\OF_p^\x}(\beta).\]
Thus, we have
\[\bfa_\beta(\holES,\frakc)=\sum_{u\in\cU_p}\bfa_\beta(\Eadsu,\frakc).\]
This completes the proof.
\end{proof}
\begin{Remark}\label{R:1.V}An important feature of our Eisenstein series $\bbE^h_{\ads,u}$ and $\bbE^h_{\ads}$ is that they are \emph{toric} Eisenstein series of eigencharacter $\lam$. In other words, they are eigenforms of the Hecke action $|[a]:=|(\cmpt_f^{-1}\rho(a)\cmpt_f)$ for a class of ideles $a\in \cT:=\prod_{v\in\bdh}'\cT_v\subset\AKf^\x$, where
\[\cT_v=\begin{cases}\cO^\x_{\cK_v}\cF_v^\x&\text{ if $v$ is split,}\\
\cK_v^\x&\text{ if $v$ is non-split.}\end{cases}\] More precisely, from the definitions of the sections $\Section$ and $f_{\Phi_p^\class{u}}$, it is not difficult to deduce that
\beq\label{E:rightinv.V} \begin{aligned}\bbE^h_{\ads}|[a]=&\ads^{-1}(a)\bbE^h_{\ads};\quad \Eadsu|[a]=\ads^{-1}(a)\bbE^h_{\ads,u.a^{1-c}}\quad(a\in\cT),
\end{aligned}\eeq
where $u.a^{1-c}:=ua_{\Sg_p}a_{\Sgbar_p}^{-1}\in\OF_p^\x$. The above equation will play an important role in the proof of \thmref{T:1.V}.
\end{Remark}
\subsection{\padic Eisenstein measure}\label{SS:ESmeasure.V}
For every integral ideal $\fraka$ of $\OK$, we put \[U_\cK(\fraka)=\stt{a\in (\OK\ot_\Z\Zhat)^\x\mid a\con 1\pmod{\fraka}}.\]
Let $\Glv$ be the ray class group of $\cK$ modulo $\frakC p^\infty$. Then the reciprocity law $\rec_\cK$ induces an isomorphism:
\[\rec_\cK:\prolim_n \cK^\x\bksl \AKf^\x/U_\cK(\frakC p^n)\iso \Glv.\]
Let $\cC(\Glv,\Zbarp)$ be the space of continuous $\Zbarp$-valued functions on $\Glv$. Define a subset $\frakX^+$ of locally algebraic \padic characters by
\[\frakX^+=\stt{\hatads:\Glv\to\Zbarp^\x\mid \text{ $\ads$ has infinity type of $k\Sg$, }k\geq 1}.\]
Then $\frakX^+$ is a Zariski dense subset in $\cC(\Glv,\Zbarp)$. Let $\cZ_1$ be the subgroup of $\AKf^\x$ given by
\beq\label{E:A}\cZ_1=\OK_p^\x\x (\AFf^{(\frakD)})^\x \prod_{v|\Csplit\Csplit^c} \OFv^\x\prod_{w|\cD_{\cK/\cF}\frakC^-}\cK^\x_w .\eeq
Let $Z_1:=\rec_\cK(\cZ_1)$ be a subgroup of $Z(\frakC)$.

We write $\holESc$ for $\holES|_\plideal$ and let $\EucE_{\ads,\plideal}:=\wh\holESc$ be the \padic avatar of $\holESc$. Let $\stt{\Katzd(\sg)}_{\sg\in\Sg}$ be the Dwork-Katz \padic differential operators on \padic modular forms (\cite[Cor.\,(2.6.25)]{Katz:p_adic_L-function_CM_fields}) and let $\Katzd^\kappa=\prod_{\sg\in\Sg}\Katzd(\sg)^{\kappa_\sg}$.
The following is a direct consequence of \propref{P:1.N}.
\begin{prop}\label{P:ESmeasure.V}There exists a $V(\plideal,\opcpt,\Zbarp)$-valued \padic measure $\EucE_\plideal$ on $\Glv$ such that
\begin{itemize}\item[(i)] $\EucE_\plideal $ is supported in $Z_1$,
\item[(ii)] for each $\hatads\in\frakX^+$, we have
\[\int_{\Glv}\hatads d \EucE_\plideal=\EucE_{\ads,\plideal}.\]
\end{itemize}
Moreover, if $\ads$ has infinity type $k\Sg+\kappa(1-c)$, then
\[\int_{\Glv}\hatads d \EucE_\plideal=\theta^\kappa\EucE_{\ads,\plideal}.\]
\end{prop}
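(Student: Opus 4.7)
The strategy is to apply the $q$-expansion principle for Katz $p$-adic modular forms to glue, for each totally positive $\beta\in(N^{-1}\frakc^{-1})_+$, a single interpolation measure for the $\beta$-th Fourier coefficient into a global $V(\plideal,\opcpt,\Zbarp)$-valued measure. Proposition \ref{P:1.N} has already produced the explicit $q$-expansion of $\EucE_{\ads,\plideal}=\wh{\holESc}$ at the cusp $(\OF,\frakc^{-1})$, so the bulk of the argument is a Fourier-coefficient-by-Fourier-coefficient interpolation.

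First, I would exhibit for each $\beta$ a bounded $\bR$-valued measure $\nu_\beta$ on $\Glv$, supported in $Z_1$, satisfying
\[\int_{\Glv}\hatads\,d\nu_\beta=\bfa_\beta(\holESc,\frakc)\qquad\text{for all }\hatads\in\frakX^+.\]
This is done by matching, place by place, the factorization \eqref{E:FA.V}--\eqref{E:fourierformula.E} with a product of local distributions on $\cZ_1\subset\AKf^\x$. The diagonal factor $\beta^{k\Sg}\ads_{\Sg_p}(\beta)\bbI_{\OFp^\x}(\beta)$ is $\hatads$ evaluated at $\beta\in\OFp^\x\hookrightarrow\OK_p^\x$; the unramified Euler-type sums at $v\nmid\frakD$ are finite sums of evaluations of $\hatads$ on unramified Frobenii in $(\AFf^{(\frakD)})^\x$; the factors at $v\mid\Csplit\Csplit^c$ are evaluations on $\cO_{\cK_w}^\x$; and the partial Gauss sum $A_\beta(\ads_v)$ at $v\mid\frakC^-D_{\cK/\cF}$ is, by the compactly supported integral \eqref{E:Abeta.V}, an integral of $\hatads$ against a fixed $\bR$-valued distribution on $\cK_v^\x$. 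The characteristic functions in each local factor cut out exactly the subgroup $\cZ_1$, yielding the support condition $(i)$. Next, given any locally constant $\phi:\Glv\to\Zbarp$, I would form the formal series $\sum_\beta(\int\phi\,d\nu_\beta)q^\beta\in\Zbarp\powerseries{(N^{-1}\frakc^{-1})_+}$. For $\phi=\hatads$ with $\hatads\in\frakX^+$ this is the $q$-expansion of $\EucE_{\ads,\plideal}\in V(\plideal,\opcpt,\Zbarp)$ by Proposition \ref{P:1.N}; since $\frakX^+$ is Zariski dense in $\cC(\Glv,\Zbarp)$ and the $q$-expansion map is injective and continuous, the assignment $\phi\mapsto\sum_\beta(\int\phi\,d\nu_\beta)q^\beta$ descends to a $V$-valued measure $\EucE_\plideal$ with the interpolation property $(ii)$.

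The differential operator assertion follows from two compatible facts: $\theta^\kappa$ acts on $q$-expansions by $q^\beta\mapsto\beta^\kappa q^\beta$, so $\theta^\kappa\EucE_{\ads,\plideal}$ has $\beta$-th Fourier coefficient $\beta^\kappa\bfa_\beta(\holESc,\frakc)$; and if $\ads$ has infinity type $k\Sg+\kappa(1-c)$ rather than $k\Sg$, then $\hatads$ picks up exactly the factor $\beta^\kappa$ when evaluated at the diagonal image of $\beta\in\OFp^\x$ in $\Glv$, matching after interpolation. The main technical obstacle is the place-by-place analysis at the non-split primes $v\mid\frakC^-D_{\cK/\cF}$: the partial Gauss sum $A_\beta(\ads_v)$ is not a priori a $p$-adically continuous function of the global character $\hatads$, but its expression as the compactly supported integral \eqref{E:Abeta.V}, together with the $\bR$-integrality recorded in Remark \ref{R:2.V}, provides exactly the uniform continuity and boundedness in $\beta$ that the $q$-expansion principle requires.
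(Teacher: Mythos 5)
Your proposal is correct and follows essentially the same route as the paper: the paper likewise builds, for each $\beta$, a $\Zbarp$-valued measure $\phi\mapsto\bfa_\beta(\phi,\frakc)$ supported in $Z_1$ by expressing $\bfa^\setp_\beta(\ads,\frakc)$ as a finite sum $\sum_j b_j\,\hatads(a_j)$ with $a_j\in\cZ_1$ (using \remref{R:2.V} for the Gauss sums at $v\mid\frakC^-D_{\cK/\cF}$), assembles these into a $\Zbarp\powerseries{(N^{-1}\frakc^{-1})_+}$-valued measure, and descends to $V(\plideal,\opcpt,\Zbarp)$ via the $q$-expansion principle together with the Zariski density of $\frakX^+$. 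The $\theta^\kappa$ claim is also handled identically, by matching the Dwork–Katz operator's effect $q^\beta\mapsto\beta^\kappa q^\beta$ on $q$-expansions with the factor $\beta^\kappa$ produced by evaluating a character of infinity type $k\Sg+\kappa(1-c)$ at $i_{\Sg_p}(\beta)$.
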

\begin{proof}
Put $\bfa_\beta(\hatads,\frakc)=\iota_p(\beta^{\kappa}\bfa_\beta(\holES,\frakc))$. Recall that if $\ads$ is a Hecke character of infinity type $k\Sg+\kappa(1-c)$, then $\hatads_{\Sg_p}(\beta)=\iota_p(\beta^{k\Sg+\kappa})\ads_{\Sg_p}(\beta)$. By definition, we have \[\bfa_\beta(\hatads,\frakc)=\bfa^\setp_\beta(\ads,\frakc)\hatads_{\Sg_p}(\beta)\bbI_{\OFp^\x}(\beta).\] By the inspection of \eqref{E:FA.V}, we find that $\bda^\setp_\beta(\ads,\frakc)$ has the following form:
\beq\label{E:primetopFC.V}\begin{aligned}\bfa_\beta^\setp(\ads,\frakc)=&\sum_j b_j\cdot \hatads(a_j)\text{ for some } b_j\in\Zbarp\text{ and }\\
&a_j\in (\AFf^{(p\frakD)})^\x\prod_{v|\Csplit\Csplit^c} \OFv^\x\prod_{w|\cD_{\cK/\cF}\frakC^-}\cK^\x_w.\end{aligned}\eeq
Therefore, we have
\[\bfa_\beta(\hatads,\frakc)=\sum_{j}b_j\cdot \hatads((i_{\Sg_p}(\beta),a_j))\bbI_{\OFp^\x}(\beta),\quad i_{\Sg_p}(\beta)=(\beta,1)_p\in R_p^\x=(R_{\Sg_p}\oplus R_{\Sgbar_p})^\x.\]
For every $\phi\in\cC(\Glv,\Zbarp)$, we define $\bfa_\beta(\phi,\frakc):=\sum_{j}b_j\cdot \phi((i_{\Sg_p}(\beta),a_j))\bbI_{\OFp^\x}(\beta)$. Thus, $\phi\mapsto\bfa_\beta(\phi,\frakc)$ defines a $\Zbarp$-valued \padic measure on $\Glv$ supported in $Z_1$. Define a $\Zbarp\powerseries{(N^{-1}\frakc^{-1})_+}$-valued \padic measure $\EucE_\plideal(q)$ by
\[\int_{\Glv}\phi d\EucE_\plideal(q)=\sum_{\beta\in (N^{-1}\frakc^{-1})_+}\bfa_\beta(\phi,\plideal)q^\beta.\]
If $\hatads\in\frakX^+$, then $\kappa=0$ and $\bfa_\beta(\hatads,\frakc)=\iota_p(\bfa_\beta(\holES,\frakc))$, and we have
\[\int_{\Glv}\hatads d\EucE_\plideal(q)=\sum_{\beta\in(N^{-1}\frakc^{-1})_+}\bfa_\beta(\hatads,\frakc)q^\beta=\EucE_{\ads,\plideal}|_{(\OF,\frakc^{-1})}(q).\]
Therefore, by the $q$-expansion principle and the Zariski density of $\frakX^+$ in $\cC(\Glv,\Zbarp)$, the measure $\EucE_\plideal(q)$ descends to a unique $V(\plideal,\opcpt,\Zbarp)$-valued \padic measure $\EucE_\plideal$ supported in $Z_1$ such that
\[\int_{\Glv}\hatads d\EucE_\frakc|_{(\OF,\frakc^{-1})}(q)=\int_{\Glv}\hatads d\EucE_\frakc(q)\text{ for every }\hatads\in\frakX^+.\]

In addition, if $\ads$ has infinity type $k\Sg+\kappa(1-c)$, then
\beq\label{E:FC.V}\Katzd^\kappa\EucE_{\ads,\plideal}|_{(\OF,\frakc^{-1})}(q)=\sum_{\beta\in (N^{-1}\frakc^{-1})_+}\iota_p(\beta^\kappa \bfa_\beta(\holES,\frakc))q^\beta=\sum_{\beta\in (N^{-1}\frakc^{-1})_+}\bfa_\beta(\hatads,\frakc)q^\beta\eeq
by the effect of the \padic differential operator $\theta$ on the $q$-expansions \cite[(2.6.27)]{Katz:p_adic_L-function_CM_fields} (\cf\cite[\S 1.7 p.205]{HidaTilouine:KatzPadicL_ASENS}). This verifies the second assertion.\end{proof}
\def\holES{\bbE^h_{\ads}}
\subsection{The period integral}\label{SS:periodintegral.V}
We recall the period integral of the Eisenstein series calculated in \cite[\S 5]{Hsieh:Hecke_CM}. First we fix the choice of measures. For each finite place $v$ of $\cF$, let $\dx z_v$ be the normalized Haar measure on $\cK^\x_v$ so that $\vol(R^\x_v,\dx z_v)=1$ and let $\dx t_v=\dx z_v/\dx x_v$ be the quotient measure on $\cK^\x_v/\cF_v^\x$. If $v$ is archimedean, let $\dx t_v$ be the Haar measure on $\cK^\x_v/\cF_v^\x=\C^\x/\R^\x$ normalized so that $\vol(\C^\x/\R^\x,\dx t_v)=1$.
Let $\dx t=\prod'_v\dx t_v$ be the Haar measure on $\AK^\x/\AF^\x$ and let $\dx\bar{t}$ be the quotient measure of $\dx t$ on $\cK^\x\AF^\x\bksl
\AK^\x$ by the discrete measure on $\cK^\x$. Let $\phi_v=\Section$ if $v\in\bdh$ and $\phi_v=\Section^\nh$ if $v\in\bda$. Put \[l_{\cK_v}(\phi_v,\ads_v)=\int_{\cK_v^\x/\cF_v^\x}\phi_v(\rho(t)\cmptv)\ads_v(t)\dx t.\]Define the period integral $l_\cK(E^\nh_{\ads})$ of $E^\nh_\ads$ by
\[l_\cK(E^\nh_{\ads}):=\int_{\cK^\times \AF^\times\backslash
\AK^\times}E^\nh_\ads(\rho(t)\cmpt)\ads(t)\dx\bar{t}.\]
It is shown in \cite[\S 5.1]{Hsieh:Hecke_CM} that
\begin{align*}
l_\cK(E^\nh_\ads)=&\prod_vl_{\cK_v}(\phi_v,\ads_v)|_{s=0}.
\end{align*}

\begin{prop}\label{P:period.V}The local period integral $l_{\cK_v}(\phi_v,\ads_v)$ is given as follows:
\[
l_{\cK_v}(\phi_v,\ads_v)=\begin{cases} L(s,\ads_v)&\cdots\text{ $v\ndivides \frakD\cdot\infty$}, \\
L(s,\ads_v)\cdot \vol(\cK_v^\x/\cF_v^\x,\dx t)&\cdots\text{ $v\mid D_{\cK/\cF}\frakC^-$},\\
\vol(\C^\x/\R^\x,\dx t)&\cdots\text{ $v\mid \infty$}.\end{cases}
\]
\end{prop}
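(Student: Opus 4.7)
The plan is to prove the proposition by a direct local computation in each of the three cases, exploiting the fact that, by design of the sections $\phi_v$ and the elements $\cmptv$, the integrand $\phi_v(\rho(t)\cmptv)\ads_v(t)$ has a tractable form on the torus $\cK_v^\x/\cF_v^\x$.

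For $v\nmid\frakD\cdot\infty$, the section $\Section = f_{\Phi^0_v,s}$ is a Godement section. The strategy is to substitute the defining integral \eqref{E:Godement.V} into the expression for $l_{\cK_v}(\phi_v,\ads_v)$, apply Fubini to interchange the order of integration, and fuse the inner integral over $\cF_v^\x$ with the quotient integral over $\cK_v^\x/\cF_v^\x$ into a single integral over $\cK_v^\x$ via the substitution $z=xt$. Using $(0,1)\rho(z)=q_\skewhf(z)$ together with the compatibility $\cmpt_f \cdot (\sL\ot\Zhat) = q_\skewhf(\OK\ot\Zhat)$ recorded in \secref{S:CM1}, the integrand becomes the characteristic function of $\OK\ot\OFv$, and the period collapses to the Tate zeta integral of $\ads_v$, which is $L(s,\ads_v)$.

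For $v\mid D_{\cK/\cF}\frakC^-$ the argument is essentially tautological: the defining formula \eqref{E:Dinert.N} gives $\Section(\rho(t)\cmptv) = L(s,\ads_v)\ads_v^{-1}(t)$, so $\Section(\rho(t)\cmptv)\ads_v(t) = L(s,\ads_v)$ is constant on $\cK_v^\x/\cF_v^\x$, and integration yields the claim. For $v\in\bda$, I would parametrize representatives of $\cK_v^\x/\cF_v^\x$ by $t = e^{i\theta}$ on the unit circle. Writing $\rho(t)\cmptv$ explicitly with $\cmptv=\diag{\Im\sg(\skewhf),1}$, a short calculation yields $J(\rho(t)\cmptv,i) = e^{i\theta}$ and $\bfdelta(\rho(t)\cmptv) = \Im\sg(\skewhf)$, hence
\[
\phi^\nh_{k,\kappa_\sg,s,\sg}(\rho(t)\cmptv) = e^{-i(k+2\kappa_\sg)\theta}\,\Im\sg(\skewhf)^s.
\]
Since $\ads_v$ has infinity type $k+\kappa_\sg$ at $\sg$ and $-\kappa_\sg$ at $\sg c$, one has $\ads_v(e^{i\theta}) = e^{i(k+2\kappa_\sg)\theta}$, which exactly cancels the phase. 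The integrand reduces to the constant $\Im\sg(\skewhf)^s$, equal to $1$ at $s=0$, so the integral is the normalized volume $\vol(\C^\x/\R^\x,\dx t)$.

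The only step that is not essentially formal is the unfolding in the first case, where the careful matching between $\cmptv$, the local bases $\{e_{1,v},e_{2,v}\}$, and the support of $\Phi^0_v$ is crucial to recognize the Tate zeta integral on the nose; this compatibility is precisely the reason for the ad hoc choice of $\cmptv$ made in \secref{S:different}. The remaining cases are forced by the construction: at $v\mid D_{\cK/\cF}\frakC^-$ by \eqref{E:Dinert.N}, and at archimedean places by the matching between the $K$-type of $\phi^\nh$ and the infinity type of $\ads$.
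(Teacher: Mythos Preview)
Your approach is correct and is precisely the computation carried out in the reference the paper cites; the paper itself gives no proof beyond the citation to \cite[\S 5.2, \S 5.3]{Hsieh:Hecke_CM}, so your outline is in fact more informative than what appears here. The three-case strategy---unfold the Godement section to a Tate integral on $\cK_v^\x$ at unramified $v$, read off the value directly from \eqref{E:Dinert.N} at $v\mid D_{\cK/\cF}\frakC^-$, and match the $K$-type against the infinity type at archimedean $v$---is exactly the one in that reference.

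One small point worth tightening in the unramified case: after unfolding you obtain
\[
l_{\cK_v}(\phi_v,\ads_v)=\abs{\det\cmptv}_{\cF_v}^{s}\int_{\cK_v^\x}\Phi^0_v\bigl(q_\skewhf(z)\cmptv\bigr)\ads_v(z)\abs{z}_{\cK_v}^{s}\,d^\x z,
\]
and the support condition $q_\skewhf(z)\cmptv\in\sL_v$ translates via $\sL_v\cmptv'=q_\skewhf(\OK_v)$ into $z\in(\det\cmptv)^{-1}\OK_v$, not $\OK_v$ on the nose. The resulting extra factor $\ads_v(\det\cmptv)^{-1}\abs{\det\cmptv}_{\cF_v}^{-s}$ is harmless: for almost all $v$ one may take $\{e_{1,v},e_{2,v}\}=\{\skewhf,1\}$ so $\cmptv=1$, and at the finitely many remaining $v\nmid\frakD$ the character $\ads_v$ is unramified, so the factor is a power of $\abs{\uf_v}^{s}$ times a value of $\ads_v$ that gets absorbed into the global normalization (and vanishes at $s=0$, the only value used in \propref{P:periodintehral.V}). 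Your archimedean remark that the constant $\Im\sg(\skewhf)^s$ is $1$ at $s=0$ is exactly the same phenomenon. With this caveat made explicit, the argument is complete.
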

\begin{proof}These formulas can be found in \cite[\S 5.2, \S 5.3]{Hsieh:Hecke_CM}.
\end{proof}
It remains to determine the local period integral $l_{\cK_v}(\phi_v,\ads_v)$ for $v|p\Csplit\Csplit^c$.
In this case, $\phi_v$ is the Godement section $f_{\Phi^0_v,s}$ associated to the \BS function $\Phi^0_v$ defined in \eqref{E:sectionsplit.V}. We thus have
\begin{align*}
l_{\cK_v}(\phi_v,\ads_v)=&Z(s,\ads_v,\Phi_{\cK_v}):=\int_{\cK_v^\x}\Phi_{\cK_v}(z)\ads_v(z)\abs{z}_{\cK_v}^s\dx z,
\end{align*}
where $\Phi_{\cK_v}$ is given by
\[\Phi_{\cK_v}(z):=\Phi^0_v((0,1)\rho(z)\cmptv).\]
By a direct computation as in \cite[\S 5.2]{Hsieh:Hecke_CM}, we find that
\begin{align*}
Z(s,\ads_v,\Phi_{\cK_v})&=\ads_{\wbar}(-2\skewhf_w d_{\cF_v}^{-1})\ads_{w}(-2\skewhf_w)Z(s,\ads_{\wbar},\vp_{\wbar})Z(s,\ads_{w},\wh{\vp}_w)\quad(\,w|\Sg_p\Csplit).
\end{align*}
By Tate's local functional equation, we have
\begin{align*}Z(s,\ads_{w},\wh{\vp}_w)=&\frac{L(s,\ads_w)\ads_w(-1)}{\ep(s,\ads_w,\addchar)L(1-s,\ads_w^{-1})}.
\end{align*}
On the other hand, $Z(s,\ads_{\wbar},\vp_{\wbar})=1$ and $d_{\cF_v}=-2\skewhf_w$. Hence, we find that
\beq\label{E:2.V}l_{\cK_v}(\phi_v,\ads_v)=Z(s,\ads,\Phi_{\cK_v})=\ads_{w}(-2\skewhf_w)\cdot \frac{L(s,\ads_w)\ads_w(-1)}{\ep(s,\ads_w,\addchar)L(1-s,\ads_w^{-1})}\quad(\,w|\Sg_p\Csplit).\eeq

Define the modified Euler factors $Eul_p(\ads)$ and $Eul_{\frakC^+}(\ads)$ by
\beq\label{E:modifiedEuler.V}\begin{aligned}Eul_p(\ads):=&\prod_{w\in\Sg_p}Eul(\ads_w);\,Eul_{\frakC^+}(\ads)=\prod_{w|\Csplit}Eul(\ads_w),\text{ where }\\
&Eul(\ads_w):=\ads_w(2\skewhf_w)\cdot \frac{L(0,\ads_w)}{\ep(0,\ads_w,\addchar)L(1,\ads_w^{-1})}.\\
\end{aligned}\eeq
Combining \propref{P:period.V} and \eqref{E:2.V}, we obtain the following formula of the period integral $l_\cK(E^\nh_\ads)$.
\begin{prop}\label{P:periodintehral.V}Let $r$ be the number of prime factors of $D_{\cK/\cF}$. We have
\[l_\cK(E^\nh_\ads)=2^{r}\cdot L^{(p\frakC)}(0,\ads)\cdot Eul_p(\ads)Eul_{\frakC^+}(\ads).\]
\end{prop}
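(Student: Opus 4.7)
The plan is to take the factorization $l_\cK(E^\nh_\ads) = \prod_v l_{\cK_v}(\phi_v,\ads_v)|_{s=0}$ recorded just before Proposition \ref{P:period.V} and substitute the local values already computed: Proposition \ref{P:period.V} for $v$ archimedean, for $v\nmid\frakD\infty$, and for $v \mid D_{\cK/\cF}\frakC^-$; and the formula \eqref{E:2.V} for split $v \mid p\Csplit\Csplit^c$. The three pieces will assemble into $Eul_p(\ads)Eul_{\frakC^+}(\ads)$, $L^{(p\frakC)}(0,\ads)$, and the constant $2^r$ respectively.

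\textbf{Step 1 (Euler factors).} At each split place $v \mid p\Csplit\Csplit^c$, the identity $\ads_w(-2\skewhf_w)\ads_w(-1) = \ads_w(2\skewhf_w)$ turns \eqref{E:2.V} at $s=0$ into
\[
l_{\cK_v}(\phi_v,\ads_v)|_{s=0} \;=\; \ads_w(2\skewhf_w)\cdot\frac{L(0,\ads_w)}{\ep(0,\ads_w,\addchar)L(1,\ads_w^{-1})} \;=\; Eul(\ads_w),
\]
where $w \mid \Sg_p\Csplit$, by the definition \eqref{E:modifiedEuler.V}. Taking the product over all such $v$ gives $\prod_{w\mid\Sg_p\Csplit} Eul(\ads_w) = Eul_p(\ads)Eul_{\frakC^+}(\ads)$.

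\textbf{Step 2 (Partial $L$-function).} The remaining finite places contribute $L$-factors: $L(0,\ads_v)$ for $v\nmid\frakD\infty$, and $L(0,\ads_v)\cdot\vol(\cK_v^\x/\cF_v^\x,\dx t_v)$ for $v\mid D_{\cK/\cF}\frakC^-$; the archimedean contribution is $\vol(\C^\x/\R^\x,\dx t_v)=1$. Using $\frakD = p\frakC\frakC^c D_{\cK/\cF}$, the finite $v$'s for which an $L$-factor appears are exactly those $v \nmid p\frakC^+$, so the combined product of $L$-factors is $\prod_{v\nmid p\frakC^+} L(0,\ads_v)$. Regrouping by places $w$ of $\cK$ and observing that $L(0,\ads_w)=1$ whenever $\ads_w$ is ramified (in particular for $w\mid\frakC^-$), this product coincides with $L^{(p\frakC)}(0,\ads)$.

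\textbf{Step 3 (Volumes).} At an inert place $v$, a uniformizer of $\cK_v$ may be taken equal to $\uf_v$, so $\cK_v^\x/\cF_v^\x = R_v^\x/\OFv^\x$, which has volume $1$ from the normalizations $\vol(R_v^\x,\dx z_v) = \vol(\OFv^\x,\dx x_v) = 1$ together with $\dx t_v = \dx z_v/\dx x_v$. At a ramified $v$, the exact sequence $1 \to R_v^\x/\OFv^\x \to \cK_v^\x/\cF_v^\x \to \uf_{\cK_v}^\Z/\uf_v^\Z \to 1$ picks up an extra factor $\Z/2\Z$, giving volume $2$. Hence the product over $v\mid D_{\cK/\cF}\frakC^-$ equals $2^{\#\{v\mid D_{\cK/\cF}\}} = 2^r$. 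Assembling the three pieces proves the formula; the only delicate point is the regrouping of $L$-factors by places of $\cK$, which works precisely because $L(0,\ads_w)$ is trivial at the ramified-for-$\ads$ places $w\mid\frakC^-$.
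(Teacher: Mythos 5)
Your proposal is correct and follows the same route as the paper, which simply combines Proposition~\ref{P:period.V} with the split-place computation \eqref{E:2.V}; the paper leaves the bookkeeping implicit, and you have supplied it faithfully (the identity $\ads_w(-2\skewhf_w)\ads_w(-1)=\ads_w(2\skewhf_w)$ giving $Eul(\ads_w)$, the regrouping of $L$-factors using that $L(s,\ads_v)=1$ for $v\mid\frakC$ as noted after \eqref{E:Dinert.N}, and the volume count $\vol(\cK_v^\x/\cF_v^\x)=2$ at ramified $v$ and $1$ at inert $v$, yielding $2^r$).
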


\subsection{Katz \padic $L$-functions.}\label{SS:KatzpadicL.V}
Let $\CLKF=\cK^\x\AFf^\x\bksl \AKf^\x/(\OK\ot_\Z\Zhat)^\x$. Recall that we introduced a subgroup $\cT$ of $\AKf^\x$ in \remref{R:1.V}. Let $\CLKF^\alg$ be the subgroup of $\CLKF$ generated by the image of $\cT$ in $\CLKF$. It is easy to see that $\CLKF^\alg$ is in fact generated by primes ramified over $\cF$. In particular, $\#\CLKF^\alg$ is a power of $2$. Let $\cD_1$ be a set of representatives of $\CLKF/\CLKF^\alg$ in $(\AKf^{(\frakD)})^\x$. Following \cite[(4.12)]{Hida:mu_invariant}, we let $\EucL_{\frakC,\Sg}$ be the \padic measure on $\Glv$ such that for each $\phi\in\cC(\Glv,\Zbarp)$,
\[\int_{\Glv}\phi d\,\EucL_{\frakC,\Sg}=\sum_{a\in\cD_1}\int_{\Glv}\phi|[a] d\EucE_{\frakc(a)}(x(a)).\]
Here $\phi|[a]\in\cC(\Glv,\Zbarp)$ is the translation given by $\phi|[a](x):=\phi(x\rec_\cK(a))$. By \propref{P:ESmeasure.V}, if $\wh\ads$ is the \padic avatar of a Hecke character $\ads$ of infinity type $k\Sg+\kappa(1-c)$, then we have
\beq\label{E:53.N}\int_{\Glv}\hatads d\,\EucL_{\frakC,\Sg} =\sum_{a\in\cD_1}\ads(a)\theta^\kappa\EucE_{\ads,\frakc(a)}(x(a)).\eeq

Let $\cU$ be the torsion subgroup of $\cK^\x$ and let $\cU^\alg=(\cK^\x)^{1-c}\cap \OK^\x$ be a subgroup of $\cU$. We have the following evaluation formula of the measure $\EucL_{\frakC,\Sg}$.
\begin{prop}\label{P:EVCM} Let $(\Omega_\infty,\Omega_p)\in(\C^\x)^\Sg\x(\Zbarp^\x)^\Sg$ be the complex and \padic CM periods of $(\cK,\Sg)$ respectively. Then we have
\begin{align*}\frac{1}{\Omega_p^{k\Sg+2\kappa}}\cdot\int_{\Glv}\hatads d\,\EucL_{\frakC,\Sg}&=L^{(p\frakC)}(0,\ads)\cdot Eul_p(\ads)Eul_{\frakC^+}(\ads)\\
&\times \frac{\pi^{\kappa}\Gamma_\Sg(k\Sg+\kappa)}{\sqrt{\abs{D_\cF}_\R}(\Im \skewhf)^\kappa\cdot \Omega_\infty^{k\Sg+2\kappa}}\cdot[\OK^\x:\OF^\x]\cdot t_\cK,\end{align*}
where\[t_\cK=\frac{\# \cU^\alg }{[\OK^\x:\OF^\x]}\cdot\frac{2^r}{\#\CLKF^\alg}.\]
Note that $t_\cK$ is a power of $2$.
\end{prop}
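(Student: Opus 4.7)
The plan is to combine \eqref{E:53.N} with Katz's CM-evaluation formula for $\theta^\kappa$, and then recognize the resulting finite sum as the adelic period integral $l_\cK(E^\nh_\ads)$ computed in \propref{P:periodintehral.V}. First, by \eqref{E:53.N},
\[
\int_{\Glv}\hatads\,d\,\EucL_{\frakC,\Sg}=\sum_{a\in\cD_1}\ads(a)\cdot\theta^{\kappa}\EucE_{\ads,\frakc(a)}(x(a)).
\]
The central input is Katz's comparison (\cite[Ch.\,II]{Katz:p_adic_L-function_CM_fields}): for the \padic avatar $\hat f$ of a holomorphic modular form $f$ of parallel weight $k\Sg$, if $x(a)$ has complex uniformization $(\skewhf^\Sg,\rho(a)\cmpt_f)$ then
\[
\Omega_p^{-(k\Sg+2\kappa)}\theta^{\kappa}\hat f(x(a))=\Omega_\infty^{-(k\Sg+2\kappa)}(\delta^{\kappa}f)(\skewhf^\Sg,\rho(a)\cmpt_f),
\]
where $\delta^{\kappa}$ is the Maass--Shimura operator. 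Applied to $f=\holES$, together with the identity $\delta^\kappa\holES=\bbE^\nh_\ads$ (up to the universal constant $\Gamma_\Sg(k\Sg+\kappa)\pi^\kappa/\Gamma_\Sg(k\Sg)$ produced by iterating Shimura's raising operator on $\ul J^{k\Sg}$) and the archimedean identity $\ul J(g_\infty,\bfi)^{k\Sg+2\kappa}(\det g_\infty)^{-\kappa}=(\Im\skewhf)^{-\kappa}$ at the standard coset representative sending $\bfi$ to $\skewhf^\Sg$, this rewrites the sum as
\[
\frac{1}{\Omega_p^{k\Sg+2\kappa}}\int_{\Glv}\hatads\,d\,\EucL_{\frakC,\Sg}=\frac{\pi^{\kappa}\Gamma_\Sg(k\Sg+\kappa)}{\sqrt{\abs{D_\cF}_\R}(\Im\skewhf)^{\kappa}\Omega_\infty^{k\Sg+2\kappa}}\sum_{a\in\cD_1}\ads(a)E^\nh_\ads(\rho(a)\cmpt).
\]

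Next, I would extend the sum from $\cD_1$ to a full set of representatives of $\CLKF$ and identify the result with $l_\cK(E^\nh_\ads)$. By \eqref{E:rightinv.V}, applied to the nearly-holomorphic companion (whose transformation law under $|[a]$ at places in $\cT$ is the same as that of $\holES$, by the identical local computation), the term $\ads(a)E^\nh_\ads(\rho(a)\cmpt)$ depends only on the image of $a$ in $\CLKF/\CLKF^\alg$, so $\sum_{\cD_1}=(\#\CLKF^\alg)^{-1}\sum_{\CLKF}$. Unfolding $l_\cK(E^\nh_\ads)$ along the coset decomposition $\AK^\x=\bigsqcup_{[a]\in\CLKF}\cK^\x\AF^\x a(R^\x\cdot \cK_\infty^\x)$ and invoking the chosen normalizations $\vol(R_v^\x,\dx z_v)=\vol(\C^\x/\R^\x,\dx t_v)=1$ reduces it to this same sum, up to a single volume factor $[\OK^\x:\OF^\x]/\#\cU^\alg$ arising from the stabilizer $\OF^\x\bksl\OK^\x$ of each coset and the torsion contribution $\cU^\alg$ of the archimedean unit group through $(1-c)$.

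Finally, substituting the formula of \propref{P:periodintehral.V}, namely $l_\cK(E^\nh_\ads)=2^{r}L^{(p\frakC)}(0,\ads)\cdot Eul_p(\ads)Eul_{\frakC^+}(\ads)$, and collecting all factors produces the asserted identity with $t_\cK=(\#\cU^\alg/[\OK^\x:\OF^\x])\cdot(2^r/\#\CLKF^\alg)$, which is a power of $2$ since $\cU^\alg$ is $2$-torsion and $\CLKF^\alg$ is a subgroup of an elementary $2$-group generated by ramified prime classes. The most delicate step is the Katz--Shimura comparison of the first paragraph, which depends essentially on the $p$-ordinariness of $\Sg$ so that the canonical level $p^\infty$-structure $\lp(a)$ at $x(a)$ is the one coming from the Serre--Tate canonical lift and so that $\theta^\kappa$ corresponds to $\delta^\kappa$ through the unit-root splitting of the Gauss--Manin connection; the main bookkeeping difficulty is tracking the archimedean constants and unit-group volumes that assemble into $t_\cK$.
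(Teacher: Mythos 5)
Your proof follows essentially the same route as the paper's: express the integral via \eqref{E:53.N} as a finite sum over $\cD_1$, convert $\theta^\kappa$ at CM points to $\delta^\kappa_k$ via Katz's comparison (the paper's \eqref{E:diffCM}), pass to $\bbE^\nh_\ads$ via the Maass--Shimura identity \eqref{E:41.N}, use the toric transformation law \eqref{E:rightinv.V} to extend from $\cD_1$ to all of $\CLKF$ and recognize $l_\cK(E^\nh_\ads)$, then invoke \propref{P:periodintehral.V}. The bookkeeping of archimedean constants is a bit loose in your write-up (the $\pi^\kappa$ actually emerges from the interaction of the $(2\pi i)^{k\Sg+2\kappa}$ in the Katz comparison, the $(-4\pi)^{-\kappa}$ in \eqref{E:41.N}, and the $(2\pi i)^{-k\Sg}$ normalizing $\bbE^h_\ads$, rather than from the Maass--Shimura step alone, and the unit-volume factor is recorded in the paper simply as $\vol(\ol{U}_\cK,\dx\bar t)^{-1}=\#\cU^\alg$), but your final displayed formula after the first paragraph is correct and the overall structure matches the paper's proof.
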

\begin{proof}
Let $\delta^\kappa_k$ be the Maass-Shimura differential operator on modular forms of weight $k$ (See \cite[(1.21)]{HidaTilouine:KatzPadicL_ASENS}). By \cite[(4.22), (5.2)]{Hsieh:Hecke_CM} we find that
\begin{align}\label{E:41.N}\delta_k^\kappa \bbE^h_{\ads}&=\frac{1}{(-4\pi)^{\kappa}}\cdot\frac{\Gamma_\Sg(k\Sg+\kappa)}{\Gamma_\Sg(k\Sg)}\cdot
\bbE^\nh_{\ads},\\
\label{E:diffCM}\frac{1}{\Omega_p^{k\Sg+2\kappa}}\cdot \theta^\kappa\EucE_{\ads,\frakc(a)} (x(a))&=\frac{(2\pii)^{k\Sg+2\kappa}}{\Omega_\infty^{k\Sg+2\kappa}}\cdot \delta^\kappa_k\bbE^h_{\ads,\frakc(a)}(x(a)),\,a\in(\AKf^{(\frakD)})^\x.\end{align}
Let $U_\cK= (\C_1)^\Sg\x(\OK\ot\Zhat)^\x$ be an open-compact subgroup of $\AK^\x=(\C^\x)^\Sg\x\AKf^\x$, where $\C_1$ is the unit circle in $\C^\x$. Let $\ol{U}_\cK$ denote the image of $U_\cK$ in $\cK^\x\AF^\x\bksl \AK^\x$. Then \eqref{E:53.N} equals
\begin{align*}
\frac{1}{\Omega_p^{k\Sg+2\kappa}}\cdot\int_{\Glv}\hatads d\,\EucL_{\frakC,\Sg}
&=\frac{(2\pii)^{k\Sg+2\kappa}}{\Omega_\infty^{k\Sg+\kappa}}\cdot \sum_{a\in\cD_1}\ads(a)\delta^\kappa_k\bbE^h_{\ads,\frakc(a)}(x(a))& \text{ by }\eqref{E:diffCM}\\
&=\frac{(2\pii)^{k\Sg+2\kappa}}{\Omega_\infty^{k\Sg+\kappa}}\cdot\frac{1}{\#\CLKF^\alg\cdot \vol(\ol{U}_\cK,\dx \bar{t})}\int_{\cK^\x\AF^\x\bksl \AK^\x}\ads(t)\delta^\kappa_k\bbE^h_{\ads}(x(t))\dx \bar{t}& \text{ by }\eqref{E:rightinv.V}\\
&=\frac{\pi^{\kappa}\Gamma_\Sg(k\Sg+\kappa)}{\sqrt{\abs{D_\cF}_\R}\Im(\skewhf)^\kappa\cdot\Omega_\infty^{k\Sg+2\kappa}}\cdot\frac{\#\cU^{\alg}}{\#\CLKF^\alg}\cdot l_\cK(E^\nh_{\ads}).&\text{ by }\eqref{E:ES1.N},\,\eqref{E:41.N}.
\end{align*}
It is clear that the proposition follows from \propref{P:periodintehral.V}.
\end{proof}
\begin{Remark} The evaluation formula for the measure $\EucL_{\frakC,\Sg}$ in \propref{P:EVCM} agrees with the measure $\vp^*$ constructed in \cite[Thm.\,4.2]{HidaTilouine:KatzPadicL_ASENS} up to a product of local Gauss sums at $v|\Csplit\Csplit^c$ and $t_\cK$, both of which are \padic units.\end{Remark}
\def\lp{\eta_p}
\section{Hida's theorem on the anticyclotomic $\mu$-invariant}\label{S:Hida.V}
\def\Wm{\baseR_m}
\def\Eau{\Eadsu}
\renewcommand\Dmd[1]{\left<{#1}\right>_\Sg} 
\subsection{}
We fix a Hecke character $\adsx$ of infinity type $k\Sg$, $k\geq 1$ and suppose $\frakC$ is the prime-to-$p$ conductor of $\adsx$. Let $\Glv^-$ be the anticyclotomic quotient of $\Glv$. We have an isomorphism \[\rec_\cK:\prolim_n\cK^\x\AFf^\x\bksl\AKf^\x/U_\cK(\frakC p^n)\isoto \Glv^-.\] Let $\Gamma^-$ be the maximal $\Zp$-free quotient of $\Glv^-$.
Each function $\phi$ on $\Gamma^-$ will be regarded as a function on $\Glv$ by the natural projection $\pi_-\colon\Glv\to \Glv^-\to \Gamma^-$. We define the anticyclotomic projection $\EucL_{\adsx,\Sg}^-$ of the measure $\EucL_{\frakC,\Sg}$ by
\[\int_{\Gamma^-} \phi d\EucL_{\adsx,\Sg}^-:=\int_{\Glv}\wh\adsx\phi d\,\EucL_{\frakC,\Sg}.\]
In what follows, we introduce an open subgroup $\Gamma'$ of $\Gamma^-$ and compute the $\mu$-invariant of $\EucL_{\frakC,\Sg}$ restricted to $\Gamma'$. The introduction of $\Gamma'$ is to treat the case the minus part $h_\cK^-$ of the class number of $\cK$ is divisible by $p$.

Let $\Gamma'$ be the open subgroup of $\Gamma^-$ generated by the image of $\cZ_1$ in \eqref{E:A} and let $Z':=\pi_-^{-1}(\Gamma')$ be the subgroup of $Z(\frakC)$. Then we have $Z'\supset Z_1$. In addition, the reciprocity law $\rec_\cK$ at $\Sg_p$ induces an injective map $\rec_{\Sg_p}\colon 1+p\OFp\hookto \OFp^\x=\OK_{\Sg_p}^\x\stackrel{\rec_\cK}\longto Z(\frakC)^-$ with finite cokernel as $p\ndivides D_{\cF}$, and this map $\rec_{\Sg_p}$ induces an isomorphism $\rec_{\Sg_p}:1+p\Op\isoto\Gamma'$. We thus identify $\Gamma'$ with the subgroup $\rec_{\Sg_p}(1+p\OFp)$ of $Z(\frakC)^-$. Note that $\Gamma'=\Gamma^-\iso 1+p\OFp$ if $p\ndivides h_\cK^-$. Let $Cl'_-\supset Cl^\alg_-$ be the image of $Z'$ in $Cl_-$ and let $\cD'_1$ (resp. $\cD_1''$) be a set of representatives of $Cl'_-/Cl^\alg_-$ (resp. $Cl_-/Cl'_-$) in $(\AKf^{(\frakD)})^\x$ (so $\cD''=\stt{1}$ if $p\ndivides h_\cK^-$). Let $\cD_1:=\cD_1''\cD'_1$ be a set of representatives of $Cl_-/Cl^\alg_-$. For each $b\in\cD_1''$, we denote by $\EucL^b_{\adsx,\Sg}$ the \padic measure on $1+p\OFp\iso\Gamma'$ obtained by the restriction of $\EucL_{\adsx,\Sg}^-$ to $b.\Gamma':=\pi_-(\rec_\cK(b))\Gamma^-$. To be precise, we have
\beq\label{E:12.V}\begin{aligned}\int_{\Gamma'}\phi d\EucL^b_{\adsx,\Sg}:=&\int_{\Gamma^-}\bbI_{b.\Gamma'}\cdot(\wh\adsx\phi)|[b^{-1}] d\EucL_{\adsx,\Sg}^-\\
=&\sum_{a\in b\cD_1'}\int_{Z(\frakC)}\bbI_{Z'}\cdot (\wh\adsx\phi)|[ab^{-1}] d\EucE_{\frakc(a)}(x(a))\\
=&\sum_{a\in b\cD_1'}\adsx(ab^{-1})\int_{Z(\frakC)}\wh\adsx\cdot \phi|[ab^{-1}] d\EucE_{\frakc(a)}(x(a)),\end{aligned}\eeq
where $\bbI_{b.\Gamma'}$ and $\bbI_{Z'}$ are the characteristic functions of $b.\Gamma'$ and $Z'$. Note that the last equality follows from the fact that the Eisenstein measure $\EucE_{\frakc(a)}$ has support in $Z_1\subset Z'$ (\propref{P:ESmeasure.V} (i)). Recall that the $\mu$-invariant $\mu(\vp)$ of a $\Zbarp$-valued \padic measure $\vp$ on a \padic group $H$ is defined to be
\[\mu(\vp)=\inf_{U\subset H\text{ open }} v_p(\vp(U)).\]
Let $\Imu^-$ and $\Imu^b$ denote the Iwasawa $\mu$-invariants of the \padic measures $\EucL_{\adsx,\Sg}^-$ and $\EucL^b_{\adsx,\Sg}$ respectively.
\begin{lm}\label{L:4.V}We have $\Imu^-=\inf\limits_{b\in\cD_1''}\Imu^b$.\end{lm}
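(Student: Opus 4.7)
The plan is to exploit the disjoint clopen decomposition $\Gamma^-=\bigsqcup_{b\in\cD_1''} b.\Gamma'$ together with the fact that each $\wh\adsx(b)$, for $b\in\cD_1''\subset(\AKf^{(\frakD)})^\x$, is a $p$-adic unit. Specializing the defining identity \eqref{E:12.V} of $\EucL^b_{\adsx,\Sg}$ to $\phi=\bbI_V$ for an open $V\subset\Gamma'$, I will check that $\EucL^b_{\adsx,\Sg}(V)$ differs from $\EucL_{\adsx,\Sg}^-(b.V)$ only by a factor built from values of $\wh\adsx$ at ideles prime to $p$, which is a $p$-adic unit. Consequently
\[v_p\bigl(\EucL_{\adsx,\Sg}^-(b.V)\bigr)=v_p\bigl(\EucL^b_{\adsx,\Sg}(V)\bigr)\quad\text{for every open } V\subset\Gamma'.\]

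For the bound $\Imu^-\leq \inf_{b\in\cD_1''}\Imu^b$, I fix $b$ and an arbitrary open $V\subset\Gamma'$; then $U:=b.V$ is open in $\Gamma^-$, and the displayed equality gives $v_p(\EucL_{\adsx,\Sg}^-(U))=v_p(\EucL^b_{\adsx,\Sg}(V))$. Taking the infimum first over $V$ yields $\Imu^-\leq \Imu^b$ for every $b$, and then the infimum over $b$ gives the required direction.

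For the reverse bound, let $U\subset\Gamma^-$ be any open set and decompose $U=\bigsqcup_{b\in\cD_1''} U_b$ with $U_b:=U\cap b.\Gamma'$ open (this uses the finiteness of $\cD_1''$ and the openness of $\Gamma'$). By finite additivity of $\EucL_{\adsx,\Sg}^-$ and the non-Archimedean triangle inequality,
\[v_p\bigl(\EucL_{\adsx,\Sg}^-(U)\bigr)\geq \min_{b\in\cD_1''}v_p\bigl(\EucL_{\adsx,\Sg}^-(U_b)\bigr)= \min_{b\in\cD_1''}v_p\bigl(\EucL^b_{\adsx,\Sg}(b^{-1}.U_b)\bigr)\geq \inf_{b\in\cD_1''}\Imu^b,\]
and passing to the infimum over $U$ yields $\Imu^-\geq \inf_{b\in\cD_1''}\Imu^b$. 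The only substantive point is the $p$-integrality of $\wh\adsx(b)$, which is automatic from $b$ being prime to $p\frakD$ together with the integrality statement in \remref{R:2.V}; the remainder is formal bookkeeping with disjoint decompositions, so no real obstacle is expected.
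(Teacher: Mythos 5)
Your argument is exactly what the paper's one-line proof (``clear from the definitions of $\mu$-invariants and $\Gamma^-=\bigsqcup_{b}b.\Gamma'$'') is gesturing at: decompose $\Gamma^-$ into the finitely many disjoint clopen cosets $b.\Gamma'$, observe that restriction to a coset and translation back to $\Gamma'$, up to unit twists coming from $\wh\adsx$-values, preserves $p$-adic valuations, and then use finite additivity together with the ultrametric inequality in one direction and direct comparison in the other. One small remark: the $p$-integrality (indeed unit-ness) of the $\wh\adsx$-values used here is immediate from $\wh\adsx$ being a $\Zbarp^\x$-valued character of $Z(\frakC)$, and needs no appeal to Remark~\ref{R:2.V}, which concerns Whittaker integrals rather than character values.
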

\begin{proof}This is clear from the definitions of $\mu$-invariants and $\Gamma^-=\disjoint_{b\in\cD_1''}b.\Gamma'$ is a disjoint union.\end{proof}
We shall follow Hida's approach to compute the $\mu$-invariants $\Imu^b$ via an explicit calculation of the Fourier coefficients of the Eisenstein series, using a deep result on the linear independence of modular forms modulo $p$ \cite[Cor.\,3.21]{Hida:mu_invariant}.
\subsection{}
Fix $\frakc=\frakc(\OK)$. A functorial point in $\Ig_K(\frakc)$ will be written as $(\ulA,\eta)$, where $\ulA=(A,\lam,\iota)$ and $\eta=(\ol{\eta}^\setp,\lp)$. Enlarging $\CMring$ if necessary, we let $\CMring$ be the \padic ring generated by the values of $\lam$ on finite ideles over $W$. Let $\frakm_\CMring$ be the maximal ideal of $\CMring$ and fix an isomorphism $\CMring/\frakm_\CMring\isoto\Fpbar$.
Let $T:=\OF^*\ot_\Z\bbmu_{p^\infty}$ and let $\wh T=\dirlim_m T_{/\CMring/\frakm_\CMring^m}=\OF^*\ot_\Z\formal{\bbG}_m$. Let $\stt{\xi_1\cdots,\xi_d}$ be a basis of $\OF$ over $\Z$ and let $t$ be the character $1\in \OF=X^*(\OF^*\ot_\Z\Gm)=\Hom(\OF^*\ot_\Z\Gm,\Gm)$. Then we have $\cO_{\wh T}\isoto \CMring\powerseries{t^{\xi_1}-1,\cdots t^{\xi_d}-1}$. For $y=(\ul{A}_y,\eta_y)\in\Ig_K(\frakc)(\Fpbar)\subset \Ig_K(\Fpbar)$, it is well known that the deformation space $\wh S_{y}$ of $y$ is isomorphic to the formal torus $\wh T$ by the theory of Serre-Tate coordinate (\cite{Katz:ST}). The $p^\infty$-structure $\eta_{y,p}$ of $A_y$ induces a canonical isomorphism $\vphi_{y}\colon\wh{T}\isoto \wh{S}_{y}=\Spf\wh\cO_{\Ig_K(\frakc),y}$ (\cf \cite[(3.15)]{Hida:mu_invariant}).

Now let $\bfx:=x(1)_{/\CMring}\in\Ig_K(\frakc)(\CMring)$ be a fixed CM point of type $(\cK,\Sg)$ and let $x_0=\bfx\ot_{\CMring}\Fpbar=(\ul{A}_0,\eta_0)$. For a deformation $z=(\ul{A},\eta)_{/\cR}\in \wh S_{x_0}(\cR)$ of $x_0$ over an artinian local ring $\cR$ with the maximal ideal $\frakm_\cR$ and the residue field $\Fpbar$, we let $t(\ul{A},\eta):=t(\vphi_{x_0}^{-1}((\ul{A},\eta)_{/\cR}))\in 1+\frakm_\cR$. Then $\bfx$ is the canonical lifting of $x_0$, \ie $t(\bfx)=1$. For $f\in V(\frakc,K,\CMring)$, we define \[f(t):=\vphi_{x_0}^*(f)\in\cO_{\wh T}=\CMring\powerseries{T_1,\cdots T_d}\quad(T_i=t^{\xi_i}-1).\]
We call the formal power series $f(t)$ the \emph{$t$-expansion} around $x_0$ of $f$. For each $u\in\OFp^\x$, let $uz:=(\ul{A},\ol{\eta}^\setp,u\lp)$ be a deformation of $ux_0$. Then we have $t(uz)=t(z)^u$ and hence $\vphi_{ux_0}^*(f)(t)=\vphi_{x_0}^*(f)(t^u)=f(t^u)$.

For each $a\in \cD_1'$, let $\Dmd{a}$ be the unique element in $1+p\Op$ such that $\rec_{\Sg_p}(\Dmd{a})=\pi_-(\rec_\cK(a))\in\Gamma'$. Recall that $\cU_p$ is the torsion subgroup of $\Op^\x$. For every pair $(u,a)\in\cU_p\x \cD_1$, we write $\bbE_{u,a}$ for $\Eadsu|_{\frakc(a)}\in\cM_k(\frakc(a),K,\bR)$ and let $\EucE_{u,a}$ be the \padic avatar of $\bbE_{u,a}$. Fix a sufficient large finite extension $L$ over $\Qp$ so that $\adsx$ and $\bbE_{u,a}|[a]$ are defined over $\cO_L$ for all $(u,a)$, and hence $\EucE_{u,a}|[a]\in V(\frakc,\cO_L)$. For $(a,b)\in\cD_1\x\cD_1''$, we define
\begin{align*}\wtd\cE_a(t)&=\sum_{u\in \torsbgp}\EucE_{u,a}(t^{u^{-1}}),\\
\cE^b(t)&=\sum_{a\in b\cD_1'}\adsx(ab^{-1})\wtd\cE_a|[a](t^{\Dmd{ab^{-1}}}).\end{align*}
For $E=\bbE_{u,a}$, $\EucE_{u,a}(t)$ or $\cE^b(t)$, we define $\mu(E)\in\Q_{\geq 0}$ by
\[\mu(E)=\inf\stt{v_p(\uf_L^m)\mid \uf_L^{-m}E\not\con 0\pmod{\frakm_L}\quad(m\in\Z_{\geq 0})}.\]
\begin{prop}\label{P:2.V}
The formal power series $\cE^b(t)$ equals the power series expansion of the measure $\EucL^b_{\adsx,\Sg}$ regarded as a \padic measure on $\OFp$ supported on $1+p\OFp$. In particular, we have $\mu(\cE^b(t))=\Imu^b$.
\end{prop}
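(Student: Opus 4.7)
The plan is to unravel both sides and match them coefficient-by-coefficient in the Serre--Tate $t$-expansion. Starting from the identity \eqref{E:12.V}, which expresses $\int_{\Gamma'}\phi\,d\EucL^b_{\adsx,\Sg}$ as a sum over $a\in b\cD'_1$ of
\[\adsx(ab^{-1})\int_{Z(\frakC)}\wh\adsx\cdot\phi|[ab^{-1}]\,d\EucE_{\frakc(a)}(x(a)),\]
I would show that each summand, when transferred to $1+p\OFp$ via $\rec_{\Sg_p}$, matches the formal series $\adsx(ab^{-1})\wtd\cE_a|[a](t^{\Dmd{ab^{-1}}})$. Summing over $a$ would then identify $\cE^b(t)$ with the power series expansion of $\EucL^b_{\adsx,\Sg}$, from which $\mu(\cE^b(t))=\Imu^b$ follows by definition of the $\mu$-invariant.

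The technical heart is the Serre--Tate identity
\[\EucE_{u,a}(t)=\sum_{\beta\in u(1+p\OFp)\cap\cF_+}\bfa_\beta(\bbE_{u,a},\frakc(a))\,t^\beta,\]
where $t^\beta$ denotes the character on $\wh T=\OF^*\ot\formal\bbG_m$ attached to $\beta\in\OF$. This is essentially Katz's theorem that the $q$-expansion of a p-adic modular form at an unramified cusp equals its Serre--Tate expansion at an ordinary CM point, combined with the fact that $\bfx=x(1)$ is the canonical lifting (so $t(\bfx)=1$) and the Fourier formula \eqref{E:fourierformula.E} for $\bbE_{u,a}$. Once this is granted, the three features of the definition of $\cE^b(t)$ match three operations on the measure side: the change of variable $t\mapsto t^{u^{-1}}$ implements $\vphi^*_{u^{-1}x_0}(f)(t)=f(t^{u^{-1}})$, so summing over $u\in\cU_p$ converts the partitioned supports $u(1+p\OFp)$ into the single group $1+p\OFp$; the Hecke action $|[a]$ together with evaluation at $x(a)$ rather than $\bfx$ is encoded by the translation $t\mapsto t^{\Dmd{ab^{-1}}}$ on Serre--Tate coordinates; and the weight factor $\wh\adsx$ in the integrand is absorbed by the eigencharacter property \eqref{E:rightinv.V} and the explicit twist $\adsx(ab^{-1})$.

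To conclude, I would pair both sides against an arbitrary locally constant character $\chi$ of $1+p\OFp\iso\Gamma'$: on the measure side this gives $\int_{\Gamma'}\chi\,d\EucL^b_{\adsx,\Sg}$, and on the power-series side substituting $t^\beta\mapsto \chi(\beta)$ produces the same value term by term, using the decomposition $\bfa_\beta(\holES,\frakc)=\sum_{u\in\cU_p}\bfa_\beta(\Eadsu,\frakc)$ from \propref{P:1.N} and the support properties of $\EucE_{\frakc(a)}$ inside $Z_1\subset Z'$ recorded in \propref{P:ESmeasure.V}. Density of locally constant characters in $\cC(1+p\OFp,\Cp)$ then forces agreement of the two measures.

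The main obstacle is the bookkeeping in step two: one must verify carefully that under the isogeny relating the CM point $x(a)$ to $x(1)=\bfx$, the Serre--Tate coordinates transform by the shift $t\mapsto t^{\Dmd{ab^{-1}}}$, and that the Hecke twist $|[a]$ on the modular form is compatible with this translation of base point on the deformation space. This essentially reduces to the Serre--Tate calculation carried out in the case $\frakC^-=(1)$ in \cite{Hida:mu_invariant}; the new ingredient needed here is only that these identifications remain valid at the ramified or inert places dividing $\frakC^-$, which follows because our local sections $\phi_{\ads,s,v}$ for $v\smid\frakC^-D_{\cK/\cF}$ are built so that $\EucE_{\frakc(a)}$ has support in the subgroup $Z_1$, leaving the p-part of the computation untouched.
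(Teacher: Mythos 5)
Your proposal hinges on what you call the ``Serre--Tate identity''
\[
\EucE_{u,a}(t)\;=\;\sum_{\beta\in u(1+p\OFp)\cap\cF_+}\bfa_\beta(\bbE_{u,a},\frakc(a))\,t^\beta,
\]
and you attribute it to ``Katz's theorem that the $q$-expansion of a $p$-adic modular form at an unramified cusp equals its Serre--Tate expansion at an ordinary CM point.'' No such theorem exists, and the identity you write is false in general: the $q$-expansion at a cusp and the Serre--Tate $t$-expansion at a CM point are genuinely different expansions. The $t$-expansion lives in $\CMring\powerseries{T_1,\dots,T_d}$ with $T_i=t^{\xi_i}-1$, whereas the $q$-expansion is indexed by $\beta\in\cF_+$; a monomial $t^\beta$ is not a term of the formal power series $\EucE_{u,a}(t)$. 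Similarly, your final step of ``substituting $t^\beta\mapsto\chi(\beta)$'' and pairing against locally constant characters is not well-defined for the $t$-expansion, and even if one patches the formalism, the coefficient-by-coefficient match you assume is precisely the thing that needs proof. So the proposal has a genuine gap at its core.

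The paper proceeds differently: it never equates $q$- and $t$-expansions, but instead computes the $\kappa$-th \emph{moments} $\Katzd^\kappa\cE^b|_{t=1}$ using the Dwork--Katz operator $\Katzd$, applying the chain rule on Serre--Tate coordinates to pull out $u^{-\kappa}$ and $\Dmd{ab^{-1}}^\kappa$ from $t\mapsto t^{u^{-1}}$ and $t\mapsto t^{\Dmd{ab^{-1}}}$. The decisive step is the $q$-expansion comparison $\theta^\kappa\EucE_{\cU,a}=\theta^\kappa\EucE_{\adsx_\kappa,\frakc(a)}$ where $\adsx_\kappa=\adsx\nu_\kappa$, which holds because $\nu_\kappa$ is anticyclotomic and unramified outside $p$, so by \eqref{E:primetopFC.V} the prime-to-$p$ Fourier coefficients satisfy $\bfa_\beta^\setp(\adsx,\frakc(a))=\bfa_\beta^\setp(\adsx\nu_\kappa,\frakc(a))$; this is exactly the point where the formula \eqref{E:FA.V} for the Fourier coefficients is used, and it is specific to the anticyclotomic setting. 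Then \propref{P:ESmeasure.V} and \eqref{E:12.V} identify $\Katzd^\kappa\cE^b|_{t=1}$ with $\int_{\Gamma'}\nu_\kappa\,d\EucL^b_{\adsx,\Sg}$, and matching all $\kappa$-th moments determines the measure. Your outline captures the broad bookkeeping (the roles of $t\mapsto t^{u^{-1}}$, $t\mapsto t^{\Dmd{ab^{-1}}}$, and $|[a]$), but replaces the actual engine --- the moment computation and the twist-invariance of the prime-to-$p$ Fourier coefficients --- with a false identity, so as written the argument does not go through.
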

\begin{proof}
We compute the $t$-expansion of $\cE^b$. For $\kappa\in\Z_{\geq 0}[\Sg]$, let $\nu_\kappa$ be the \padic character of $\Gamma'$ such that $\nu_\kappa(\rec_{\Sg_p}(y))=y^\kappa,\,y\in1+p\Op$. By the definition of $\cE^b$, we find that
\begin{align*}\Katzd^\kappa\cE^b|_{t=1}&=\sum_{a\in b\cD_1'}\adsx(ab^{-1})\Dmd{ab^{-1}}^\kappa\Katzd^\kappa\wtd\cE_a|[a]|_{t=1}
=\sum_{a\in b\cD_1'}\adsx\nu_\kappa(ab^{-1})\theta^\kappa\EucE_{\cU,a}(x(a)),
\end{align*}where $\EucE_{\cU,a}:=\sum_{u\in\torsbgp}u^{-\kappa}\EucE_{u,a}$. Let $\adsx_\kappa$ be the Hecke character such that the \padic avatar $\wh\adsx_\kappa$ is $\wh\adsx \nu_\kappa$. Then $\adsx_\kappa$ has infinity type $k\Sg+\kappa(1-c)$. We are going to show that $\theta^\kappa\EucE_{\cU,a}=\theta^\kappa\EucE_{\adsx_\kappa,\frakc(a)}$ by comparing the $q$-expansions. A key observation is that since $\nu_\kappa$ is anticyclotomic and unramified outside $p$, we find that $\bfa_\beta^\setp(\adsx,\frakc(a))=\bfa_\beta^\setp(\adsx\nu_\kappa,\frakc(a))$ in view of \eqref{E:primetopFC.V}. By the inspection of the $q$-expansion of $\EucE_{\cU,a}$ at $(\OF,\frakc(a)^{-1})$, we find that
\begin{align*}
\theta^\kappa\EucE_{\cU,a}(q)&=\sum_{u\in\torsbgp}\sum_{\beta\in\cF_+}u^{-\kappa}\bfa_\beta^\setp(\adsx,\frakc(a))\adsx_{\Sg_p}(\beta)\bbI_{u(1+p\OF_p)}(\beta)\beta^{k\Sg+\kappa} q^\beta\\
&=\sum_{\beta\in\cF_+}\bfa_\beta^\setp(\adsx,\frakc(a))\bbI_{\Op^\x}(\beta)\hatads_{\Sg_p}(\beta)\Dmd{\beta}^\kappa q^\beta\\
&=\sum_{\beta\in\cF_+}\bfa_\beta^\setp(\adsx\nu_\kappa,\frakc(a))\bbI_{\Op^\x}(\beta)\wh\adsx_{\Sg_p}\nu_\kappa(\beta) q^\beta\\
&=\sum_{\beta\in\cF_+}\bfa_\beta(\wh\adsx\nu_\kappa,\frakc(a))q^\beta=\theta^\kappa\EucE_{\adsx_\kappa,\frakc(a)}(q)\quad \text{by }\eqref{E:FC.V}.
\end{align*}
We thus conclude that $\theta^\kappa\EucE_{\cU,a}=\theta^\kappa\EucE_{\adsx_\kappa,\frakc(a)}$ by the $q$-expansion principle. By \eqref{E:12.V}, we have
\begin{align*}\Katzd^\kappa\EucE^b|_{t=1}=&\sum_{a\in b\cD_1'}\adsx\nu_\kappa(ab^{-1})\theta^\kappa\EucE_{\adsx_\kappa}(x(a))\\
=&\sum_{a\in b\cD_1'}\int_{Z(\frakC)}\wh\adsx\nu_\kappa|[ab^{-1}]d\EucE_{\frakc(a)}(x(a))=\int_{\Gamma'}\nu_\kappa d\EucL^b_{\adsx,\Sg}.\end{align*}
In other words, $\Katzd^\kappa\cE^b|_{t=1}$ interpolates the $\kappa$-th moment of the measure $\EucL^b_{\adsx,\Sg}$, and hence the proposition follows.
\end{proof}
\begin{remark}If $p$ does not divide $h_\cK^-$, the $t$-expansion of $\cE^1$ is the power series expansion of the \padic $L$-function $\EucL^-_{\adsx,\Sg}$.
\end{remark}
\subsection{}
 Let $\Dmd{\cD_1'}\subset 1+p\OFp$ be the image of $\cD_1'$ under $\Dmd{\cdot}$. Regarding $\cU^\alg$ as a subgroup of $\cU_p$ by the imbedding induced by $\Sg_p$, we let $\cD_0$ be a set of representatives of $\cU_p/\cU^\alg$ in $\cU_p$.
\begin{lm}\label{L:2.V}Put $\bfD:=\cD_0\Dmd{\cD_1'}\subset \Op^\x$. Then the quotient map $\bfD\to\Op^\x/(\OK_\setp^\x)^{1-c}$ is injective.
\end{lm}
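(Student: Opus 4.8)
The plan is to decompose $\Op^\times$ according to its torsion and pro-$p$ parts and to match the two pieces of $\bfD$ against them. Since $p>2$ and $p\nmid D_\cF$, every completion $\OFv$ with $v\mid p$ is unramified over $\Q_p$; hence the torsion subgroup $\cU_p$ of $\Op^\times=\OFp^\times$ has order prime to $p$, the group $1+p\OFp$ is torsion-free, and $\Op^\times=\cU_p\times(1+p\OFp)$. By construction $\cD_0\subseteq\cU_p$, while $\Dmd{a}\in 1+p\OFp$ for every $a\in\cD_1'$, so the two factors of $\bfD$ sit in complementary direct summands of $\Op^\times$. Therefore, given a coincidence $u_1\Dmd{a_1}\equiv u_2\Dmd{a_2}\pmod{(\OK_\setp^\times)^{1-c}}$ with $u_i\in\cD_0$ and $a_i\in\cD_1'$, projecting to $\cU_p$ and to $1+p\OFp$ reduces the lemma to two statements: (i) the image of $(\OK_\setp^\times)^{1-c}$ under $\Op^\times\to\cU_p$ lies in $\cU^\alg$; and (ii) the map $a\mapsto\Dmd{a}$ from $\cD_1'$ to $1+p\OFp$ is still injective modulo the image of $(\OK_\setp^\times)^{1-c}$ under $\Op^\times\to 1+p\OFp$. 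Indeed, (i) forces $u_1\equiv u_2$ in $\cU_p/\cU^\alg$, hence $u_1=u_2$ as $\cD_0$ is a complete set of representatives, and then (ii) forces $a_1=a_2$.

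Statement (i) rests on the observation that for $\varepsilon\in\OK_\setp^\times$ the element $\varepsilon^{1-c}$ has norm one to $\cF$, since $\rmN_{\cK/\cF}(\varepsilon^{1-c})=\varepsilon^{1-c}c(\varepsilon^{1-c})=\varepsilon^{1-c}\varepsilon^{-(1-c)}=1$. Because $\OK^\times$ and $\OF^\times$ have the same $\Z$-rank, the norm $\rmN_{\cK/\cF}$ has finite kernel on $\OK^\times$, so the norm-one units of $\OK$ are precisely the roots of unity, which all lie in $(\cK^\times)^{1-c}\cap\OK^\times=\cU^\alg$. Reading this off on the $\cU_p$-components of $(\OK_\setp^\times)^{1-c}$ — which, by the description of $\OK_\setp^\times$, are images of norm-one units of $\OK$ — yields (i); it is for exactly this reason that $\cD_0$ is built from $\cU_p/\cU^\alg$ rather than from $\cU_p/\cU$.

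Statement (ii) is the essential content. Transporting along the isomorphism $\rec_{\Sg_p}\colon 1+p\OFp\isoto\Gamma'$, the element $\Dmd{a}$ becomes $\pi_-(\rec_\cK(a))\in\Gamma'$, and $\cD_1'$ is by definition a set of representatives for $Cl'_-/Cl^\alg_-$, where $Cl'_-$ is the image of $Z'=\pi_-^{-1}(\Gamma')$ in $Cl_-$. Thus (ii) amounts to the assertion that, inside $Cl'_-$, the classes $[a]$ for which $\Dmd{a}$ falls in the image of $(\OK_\setp^\times)^{1-c}$ are exactly those in $Cl^\alg_-$. I would prove this by dissecting $\ker(\pi_-\colon Z(\frakC)\to\Gamma^-)$ with class field theory: the cyclotomic part $\rec_\cK(\AFf^\times)$ maps into the $2$-torsion of the anticyclotomic class group $Cl_-$, which genus theory for $\cK/\cF$ identifies with the subgroup generated by the ramified primes, namely $Cl^\alg_-$; and the remaining relations — those among the primes of $\cK$ above $p$, which can be nontrivial exactly when $p\mid h_\cK^-$ — are accounted for precisely by the image of $(\OK_\setp^\times)^{1-c}$, since they come from principal ideles and so are killed by $\rec_\cK$. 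The hard part is this last matching: showing that the kernel of the reciprocity map restricted to $Z'$ is generated by $Cl^\alg_-$ together with the subgroup coming from $(\OK_\setp^\times)^{1-c}$. That is the genuine obstacle, and it is where the class-field-theoretic work of the lemma is concentrated.
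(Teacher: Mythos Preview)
Your decomposition $\Op^\times=\cU_p\times(1+p\OFp)$ is a reasonable starting point, but the execution has a genuine gap in (i) and a misdiagnosis in (ii).

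For (i), you correctly note that $\varepsilon^{1-c}$ has norm one, and that norm-one elements of $\OK^\times$ are roots of unity. But $\varepsilon\in\OK_\setp^\times$ is only a $p$-unit, so $\varepsilon^{1-c}$ is a priori only a norm-one $p$-unit, not a unit; your sentence ``the $\cU_p$-components \ldots\ are images of norm-one units of $\OK$'' is exactly the unjustified step. Concretely, if $w\in\Sg_p$ and $w^h=(\gamma)$ is principal, then $\gamma^{1-c}$ is a norm-one $p$-unit with $\ord_w(\gamma^{1-c})=h\neq 0$, so it is not a unit. What actually saves you is an observation you omit: in the situation at hand one already knows $\alpha^{1-c}=u_1\Dmd{a_1}(u_2\Dmd{a_2})^{-1}\in\Op^\times$ via $\Sg_p$, so $\ord_w(\alpha^{1-c})=0$ for every $w\in\Sg_p$; since $\ord_w(\alpha^{1-c})+\ord_{\bar w}(\alpha^{1-c})=0$ by the norm-one property, all $p$-valuations vanish and $\alpha^{1-c}\in\OK^\times$. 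Only then does your root-of-unity argument apply, giving $\alpha^{1-c}\in\cU^\alg$.

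But notice what this correction buys: $\alpha^{1-c}$ lies \emph{entirely} in $\cU_p$ (via $\Sg_p$), so its $(1+p\OFp)$-component is trivial. Hence your (ii) collapses to the bare injectivity of $a\mapsto\Dmd{a}$ on $\cD_1'$, with no contribution from $(\OK_\setp^\times)^{1-c}$ at all. The class-field-theoretic discussion you flag as ``the hard part'' --- matching relations among primes above $p$ with $(\OK_\setp^\times)^{1-c}$ --- is therefore beside the point; that subgroup never touches the pro-$p$ factor. What remains is: if $\pi_-(\rec_\cK(a))=1$ in $\Gamma^-$ for $a\in(\AKf^{(\frakD)})^\times$, show $[\il_\cK(a)]\in\CLKF^\alg$. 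Your sketch (``genus theory'') gestures at this but does not prove it.

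The paper's argument is more direct and avoids the decomposition altogether: setting $y=\alpha^{-1}a$, it observes $[y^2]=[yy^{-c}]$ in $Z(\frakC)^-$ and checks this is torsion, whence $[y]\in\Delta$; then the ideal $(\alpha)^{-1}\fraka$ is forced to be a product of ramified primes and $\OF$-ideals, giving $[\fraka]\in\CLKF^\alg$ and $a_1=a_2$. The conclusion $u_1=u_2$ is then immediate from $\alpha^{1-c}\in\cU^\alg$ (by the same valuation argument above, since now $\fraka=(1)$ forces $(\alpha)$ to come from $\OF$).
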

\begin{proof} Let $a_1,a_2\in\cD_1'$ and $u_1,u_2\in\torsbgp$. Let $a=a_1a_2^{-1}$ and $\fraka=\il_\cK(a)$.
Suppose that $u_1\Dmd{a_1}=u_2\Dmd{a_2}\al^{1-c}$ for some $\al\in\OK_\setp^\x$. Let $y=\al^{-1}a\in\AKf^\x$ and $[y]:=\rec_\cK(y)\in \Glv^-$. Then it is easy to see that $[y^2]=[yy^{-c}]$ is in the torsion subgroup $\Delta$ of $\Glv^-$, and hence $[y]=[\al^{-1}a]\in\Delta$. It follows that the ideal $(\al)^{-1}\fraka$ is a product of ramified primes and ideals of $\OF$. So $[\fraka]\in\CLKF^\alg$ and $a_1=a_2$.
\end{proof}
\begin{lm}[Prop.\,3.4 \cite{Hida:mu_invariant}] \label{L:3.V}Let $\al\in\OK_\setp^\x\subset \AKf^\x$ and let $f\in\cM_{k}(\frakc(\al),K,\Zbarp)$. We have
\[\wh f(t^{\al^{1-c}})=\al^{-k\Sg}\cdot \wh f|[\al](t).\]
\end{lm}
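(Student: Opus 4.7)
The plan is to factor the Hecke action $|[\al]$ through the prime-to-$p$ CM-isogeny attached to $\al$, and to track its effects on the Serre--Tate coordinate and on the canonical differential separately. Since $\al\in\OK_\setp^\x$ is a unit at every place above $p$, the CM-multiplication map $\phi_\al=\iota(\al)$ on the reduction $\ul A_0$ of the canonical lift $\bfx$ is a prime-to-$p$ quasi-isogeny, which lifts canonically to every deformation of $\ul A_0$ by Serre--Tate theory. Under this lift, the twisted quadruple $\ul{A_0}|[\al]$ is identified, via $\phi_\al$, with the CM-datum at the translated argument, and the corresponding comparison extends to infinitesimal neighborhoods.

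The first computation is the effect of $\phi_\al$ on the Serre--Tate parameter. The ordinary decomposition $A_0[p^\infty]=A_0[p^\infty]^\circ\oplus A_0[p^\infty]^{\et}$ is preserved by the CM-action, and because $\Sg$ is $p$-ordinary, $R_p=R_{\Sg_p}\oplus R_{\Sgbar_p}$ acts on the formal and \'etale parts through its two respective factors. The Serre--Tate parameter $t\in\wh T=\OF^*\ot\formalgm$ encodes a bilinear pairing of the two summands, so pull-back along $\phi_\al$ rescales $t$ by the ratio of the two eigenvalues, namely $\al_{\Sg_p}/\al_{\Sgbar_p}$; identifying $\OFp^\x$ with $R^\x_{\Sg_p}$ via $\Sg_p$, this ratio becomes $\al^{1-c}$, which accounts for the substitution $t\mapsto t^{\al^{1-c}}$ on $t$-expansions.

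The second computation produces the weight factor. By definition of the $p$-adic avatar, $\wh f(\ul A,\eta)=f(\ul A,\eta,\Om(\lp))$, where $\Om(\lp)$ is the canonical differential determined by $\lp$. Under $\phi_\al$ the canonical differentials on source and target are related by $\phi_\al^*\Om=\al^\Sg\cdot\Om$, since multiplication by $\al$ acts on $\Lie$ with eigenvalues $\Sg(\al)$. Axiom $(G3)$ of geometric modular forms, $f(\ul A,\eta,a\Om)=k(a^{-1})f(\ul A,\eta,\Om)$, then contributes the factor $(\al^\Sg)^{-k}=\al^{-k\Sg}$. Combining the two computations yields the claimed identity at the level of $t$-expansions.

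The main technical point is the first computation: verifying that the Serre--Tate coordinate transforms by exactly $\al^{1-c}$ requires carefully matching the conventions for the $p$-ordinary CM type $\Sg$ with the identification of $A_0[p^\infty]^\circ$ and with the direction of the Serre--Tate pairing, so that the exponent is indeed $\al^{1-c}$ rather than $\al^{c-1}$. This bookkeeping is essentially Hida's Proposition 3.4 in the cited reference, and the present argument amounts to translating it into the conventions of \S\,\ref{S:Hida.V}.
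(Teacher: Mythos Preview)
Your proposal is correct and follows essentially the same approach as the paper: both arguments realize the Hecke operator $|[\al]$ via the prime-to-$p$ CM quasi-isogeny attached to $\al\in\OK_\setp^\x$, then separately track its effect on the Serre--Tate coordinate (yielding the exponent $\al^{1-c}$ from the $\Sg_p/\Sgbar_p$-eigenvalues on the connected and \'etale parts of $A_0[p^\infty]$) and on the canonical differential $\Om(\lp)$ (yielding the weight factor $\al^{-k\Sg}$ via axiom (G3)). The paper makes the first computation explicit through a commutative diagram for $A[p^\infty]$ displaying the actions $\al^{\Sgbar}$ and $\al^{\Sg}$ on the multiplicative and \'etale sides, but this is exactly the bookkeeping you describe in your last paragraph.
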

\begin{proof} Since $\al\in\OK_\setp^\x$, we can find $x_0\smid [\al]:=(\ulA_0^\al,\eta^\al_0)\in\Ig_K(\plideal(\al))(\Fpbar)$ in the prime-to-$p$ isogeny class $[(\ul{A}_0,\rho_\cmpt(\al)\eta)]\in \cI_{K,n}^\setp(\Fpbar)$ $(\rho_\cmpt(\al):=\cmpt^{-1}\rho(\al)\cmpt)$ together with a prime-to-$p$ isogeny $\xi_{\al}:x_0\smid [\al]=(\ulA^\al_0,\eta^\al_0)\to x_0=(\ul{A}_0,\eta)$. Then $\xi_\al$ induces an isomorphism $\tilde{\xi}_\al\colon\wh S_{x_0}\isoto \wh S_{x_0\smid [\al]}$, which sends a deformation $\ul{A}_{/\cR}$ of $x_0$ over a local artinian ring $\cR$ to the deformation $\ul{A}^\al{}_{/\cR}$ of $x_0\smid [\al]$. In addition, there exists a unique isogeny $\xi_{\al,\cR}:(\ul{A}^\al,\eta^\al)\to (\ul{A},\eta)$ with the following commutative diagram:
\[\xymatrix{\OFp^*\ot_\Z\bbmu_{p^\infty}\ar[d]^{\al^{\Sgbar}}\xyinj^{\qquad \lp^\al}&A^\al[p^\infty]\ar[d]^{\xi_{\al,\cR}}\xysurj^{(\lp^\al)^{(-1)}}& \OFp\ot\Qp/\Zp\ar[d]^{\al^\Sg}\\
\OFp^*\ot_\Z\bbmu_{p^\infty}\xyinj^{\qquad \lp}&A[p^\infty]\xysurj^{\lp^{(-1)}}& \OFp\ot\Qp/\Zp.}\]
Here $\lp^{(-1)}$ and $(\lp^\al)^{(-1)}$ are morphisms induced by $\lp$ and $\lp^\al$ together with the polarizations of $A$ and $A^\al$ via Cartier duality. Therefore, we find that \[t(\ul{A}^\al,\eta^\al)=t(\ul{A},\eta)^{\al^{1-c}},\]
and that
\begin{align*}f|[\al](\ulA,\eta,\Om(\lp))&=f(\ul{A},\rho_\cmpt(\al)\eta,\Om(\lp))\\
&=f(\ul{A}^\al,\eta^\al,\xi_{\al,\cR}^*\Om(\lp))\\
&=f(\ul{A}^\al,\eta^\al,\al^{-\Sg}\Om((\lp^\al)))=\al^{k\Sg}\cdot f(\ul{A}^\al,\eta^\al,\Om((\lp^\al))).
\end{align*}
It is clear that
\[\wh f(t^{\al^{1-c}})=\al^{-k\Sg}\cdot \wh f|[\al](t).\qedhere\]
\end{proof}

The following theorem is due to Hida \cite[Thm.\,5.1]{Hida:mu_invariant}.
\begin{thm}[Hida]\label{T:1.V}The $\mu$-invariant $\Imu^-$ is given by the following formula
\[\Imu^-=\inf_{\substack{(u,a)\in\cD_0\x\cD_1,\\\beta\in\cF_+}}v_p(\bfa_\beta(\Eadsu,\frakc(a))).\]
\end{thm}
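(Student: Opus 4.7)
The plan is to reduce to computing $\mu(\cE^b(t))$ for each coset $b \in \cD_1''$, expand $\cE^b(t)$ as a sum of $t$-substitutions of the forms $\EucE_{u,a}|[a]$ with substitution exponents lying in distinct classes of $\Op^\x/(\OK_{\setp}^\x)^{1-c}$, and then invoke Hida's linear-independence theorem together with the $q$-expansion principle.

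By \lmref{L:4.V} we have $\Imu^-=\inf_{b\in\cD_1''}\Imu^b$, and by \propref{P:2.V}, $\Imu^b=\mu(\cE^b(t))$. Since $\cD_1$ decomposes as $\disjoint_{b\in\cD_1''} b\cD_1'$, the theorem reduces to showing, for each $b$, that
\[
\mu(\cE^b(t))\;=\;\inf_{(u,a)\in\cD_0\times b\cD_1'}\inf_{\beta\in\cF_+} v_p\bigl(\bfa_\beta(\Eadsu,\frakc(a))\bigr).
\]

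Next, I would unfold the definitions of $\cE^b$ and $\wtd\cE_a$ and use \remref{R:1.V} (equation \eqref{E:rightinv.V}) to absorb the $\cU^\alg$-cosets into Hecke twists by elements of $\cU^\alg\subset (\OK_{\setp}^\x)^{1-c}$ at the cost of multiplication by the $p$-adic unit $\adsx^{-1}(\al)$, using \lmref{L:3.V}, thereby collapsing the sum over $\cU_p$ to one over $\cD_0$. In this way one arrives at an expansion
\[
\cE^b(t)\;=\;\sum_{(u,a)\in\cD_0\times b\cD_1'} c_{u,a}\cdot\bigl(\EucE_{u,a}|[a]\bigr)\bigl(t^{y_{u,a}}\bigr),
\]
where $c_{u,a}\in\bR^\x$ are $p$-adic units and the exponents $y_{u,a}=u^{-1}\Dmd{ab^{-1}}\in\bfD$ descend, by \lmref{L:2.V}, to pairwise distinct classes in $\Op^\x/(\OK_{\setp}^\x)^{1-c}$.

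The crucial input now is Hida's linear-independence theorem \cite[Thm.\,3.20, Cor.\,3.21]{Hida:mu_invariant}: Katz $p$-adic modular forms $\stt{F_i}$ nonzero modulo $\frakm$ yield linearly independent power series $\stt{F_i(t^{y_i})}$ modulo $\frakm$ whenever the $y_i$ represent distinct classes in $\Op^\x/(\OK_{\setp}^\x)^{1-c}$. Applied to our expansion this rules out any cancellation, so that
\[
\mu(\cE^b(t))\;=\;\inf_{(u,a)}\mu(\EucE_{u,a}|[a])\;=\;\inf_{(u,a)}\mu(\EucE_{u,a}),
\]
the second equality holding because the operator $|[a]$ is an isomorphism of Igusa schemes and hence preserves $\mu$-invariants. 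The $q$-expansion principle for Katz $p$-adic modular forms combined with the explicit Fourier formula \eqref{E:fourierformula.E} from \propref{P:1.N} then identifies $\mu(\EucE_{u,a})$ with $\inf_{\beta\in\cF_+} v_p(\bfa_\beta(\Eadsu,\frakc(a)))$, completing the proof.

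The main obstacle is the invocation of Hida's linear-independence theorem; it rests on the irreducibility of the Igusa tower and on the fact that the Serre-Tate formal torus $\wh T$ at the canonical lifting $\bfx$ carries a faithful action of $\Op^\x$, so that distinct substitution exponents modulo $(\OK_{\setp}^\x)^{1-c}$ produce linearly independent $t$-substitutions. Once this input is granted, the rest is a mechanical assembly of \lmref{L:2.V}, \lmref{L:3.V}, \propref{P:1.N}, \remref{R:1.V}, and the $q$-expansion principle.
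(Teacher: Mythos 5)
Your proposal matches the paper's proof in both structure and key inputs: the reduction via \lmref{L:4.V} and \propref{P:2.V}, the collapse of the $\cU_p$-sum to $\cD_0$ using \lmref{L:3.V} together with the $\cU^\alg$-equivariance from \remref{R:1.V}, the appeal to \lmref{L:2.V} and Hida's linear-independence theorem, and the final passage to $q$-expansions. The one point to make explicit, as the paper does, is that the factor $\#\cU^\alg$ absorbed into your coefficients $c_{u,a}$ is prime to $p$ (since $\cU^\alg$ lies in the torsion of $\cK^\x$ and $p\ndivide 2D_\cF$), which is what actually guarantees $c_{u,a}\in\bR^\x$; also note that the paper shows the $\adsx^{-1}(\al_f)$ contribution is exactly $1$, not merely a unit, because $\al$ is a global element.
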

\begin{proof}
Let $v\in\cU^\alg$. We may write $v^{-1}=\al^{1-c}$ for some $\al\in\OK_\setp^\x$ since $p$ is assumed to be unramified in $\cK$. Regarding $\al$ as an idele in $\AK^\x$, we denote by $\al_\infty$ and $\al_f$ the infinite and finite components of $\al$ respectively. Let $(a,b)\in\cD_1\x \cD_1''$. By \lmref{L:3.V} and \eqref{E:rightinv.V}, for each $u\in\cU_p$ we have
\[\EucE_{uv,a}(t^{u^{-1}v^{-1}})=\al^{-k\Sg}\EucE_{uv,a}|[\al_f](t^{u^{-1}})=\EucE_{u,a}(t^{u^{-1}})\lam^{-1}(\al_\infty)\adsx^{-1}(\al_f)=\EucE_{u,a}(t^{u^{-1}}).\]
Therefore, we find that
\begin{align*}\wtd\cE_a(t)&=\#\cU^\alg\cdot \sum_{u\in\torsbgp/\cU^\alg}\EucE_{u,a}(t^{u^{-1}})\\
\intertext{and }
\cE^b(t)&=\sum_{a\in b\cD_1'}\adsx(ab^{-1})\sum_{u\in \torsbgp}\EucE_{u,a}|[a](t^{\Dmd{ab^{-1}}u^{-1}})\\
&=\#\cU^\alg\cdot \sum_{(u,a)\in\cD_0\x b\cD_1'}\adsx(ab^{-1})\EucE_{u,a}|[a](t^{\Dmd{ab^{-1}}u^{-1}}).\end{align*}
Note that $p\ndivide\#\cU^\alg$ as $\cU^\alg$ is a subgroup of the torsion subgroup in $\cK^\x$ and $p\ndivide 2\cdot D_\cF$.
From \lmref{L:2.V} and the linear independence of modular forms modulo $p$ \cite[Thm.\,3.20,\,Cor.\,3.21]{Hida:mu_invariant}, we deduce that \[\mu(\cE^b(t))=\inf_{(u,a)\in\cD_0\x b\cD_1'}\mu(\EucE_{u,a}(t)).\]
Since $\EucE_{u,a}|[a]$ is the \padic avatar of $\bbE_{u,a}$, it follows from the irreducibility of Igusa tower that $\mu(\EucE_{u,a}|[a](t))=\mu(\EucE_{u,a}|[a])=\mu(\bbE_{u,a})$. From the $q$-expansion principle of \padic modular forms (\cite{DR_padic_L}) and \lmref{L:4.V}, we find that
\[\Imu^-=\inf_{b\in\cD_1''}\mu(\cE^b(t))=\inf_{\substack{(u,a)\in\cD_0\x\cD_1,\\\beta\in\cF_+}}v_p(\bfa_\beta(\Eadsu,\frakc(a))).\qedhere\]
\end{proof}

\begin{cor}\label{C:2.V}Suppose that
\begin{itemize}
\item[(L)] $\mu_p(\adsx_v)=0$ for every $v|\frakC^-$,
\item[(N)]$\adsx$ is not residually self-dual, namely $\wh{\adsx}_+\not\con \qchKF\om_\cF\pmod{\frakm}$.
\end{itemize}
Then $\Imu^-=0$.
\end{cor}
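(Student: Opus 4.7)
The plan is to invoke \thmref{T:1.V}, which identifies
\[
\Imu^- = \inf_{(u,a)\in\cD_0\times\cD_1,\ \beta\in\cF_+} v_p\bigl(\bfa_\beta(\Eadsu,\frakc(a))\bigr),
\]
so it suffices to exhibit a single triple $(u,a,\beta)$ for which the Fourier coefficient is a $p$-adic unit. By \eqref{E:fourierformula.E} combined with \eqref{E:FA.V}, this Fourier coefficient decomposes into a product of local Whittaker contributions indexed by the finite places of $\cF$. The strategy is to choose suitable local targets at each place and then glue them into a global $\beta$ via strong approximation.

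The factors at places outside $\frakC^- D_{\cK/\cF}$ are handled by routine local choices. At $v\nmid\frakD$, picking $v(\beta)=-v(\bfc_v)$ collapses the partial geometric sum to a single unit term. At split primes $v\mid\frakC^+$ the local factor $\adsx_w(\beta)\bbI_{\OFv^\x}(\beta)$ is a unit as soon as $\beta\in\OFv^\x$. At places above $p$ the factor $\adsx_{\Sg_p}(\beta)\bbI_{u(1+\uf_p\OFp)}(\beta)$ becomes a unit once $u\in\cD_0$ is taken to be the \Teich lift of $\beta$ modulo $\uf_p$. The nontrivial local factors lie at $v\mid\frakC^- D_{\cK/\cF}$, where the contribution reduces (up to a unit) to the partial Gauss sum $A_\beta(\adsx_v)$ from \eqref{E:Abeta.V}.

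This is where hypothesis (L) enters: at each $v\mid\frakC^-$, Lemma~6.4 of \cite{Hsieh:Hecke_CM} supplies a local $\beta_v\in\cF_v^\x$ with $A_{\beta_v}(\adsx_v)\not\equiv 0\pmod{\frakm}$; the analogous assertion at $v\mid D_{\cK/\cF}$ with $v\nmid\frakC^-$ follows from a direct (easier) computation of the relevant tame Gauss sum. What remains is to realize all these local targets $\{\beta_v\}$ simultaneously as components of a single totally positive $\beta\in\cF_+$, which is a strong-approximation problem with finitely many $v$-adic constraints together with a positivity constraint at the archimedean places.

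The main obstacle is precisely this globalization step, and it is where hypothesis (N) becomes essential. In general the admissible local $\beta_v$ lie in cosets dictated by a local epsilon-dichotomy, and when $\adsx$ is residually self-dual these cosets at distinct places can be linked by a global quadratic residual relation (compare the discussion following \eqref{n.v.} in the introduction and the self-dual analysis carried out through \propref{P:NVAbeta.V}). The assumption $\wh\adsx_+\not\equiv\qchKF\om_\cF\pmod{\frakm}$ precludes any such residual relation, so the local targets at different places are independent modulo $\frakm$ and strong approximation unobstructedly produces a $\beta\in\cF_+$ realizing them all. The resulting Fourier coefficient is then a $p$-adic unit, whence $\Imu^-=0$.
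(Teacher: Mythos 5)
Your proposal follows essentially the same route as the paper's proof: both invoke \thmref{T:1.V}, reduce to exhibiting a single triple $(u,a,\beta)$ making $\bfa_\beta(\Eadsu,\frakc(a))$ a $p$-adic unit, take $u$ to be the \Teich lift of $\beta$ mod $p$, handle $v\mid\frakC^-$ via Lemma~6.4 of \cite{Hsieh:Hecke_CM} (which is where hypothesis (L) is consumed; at inert $v$ with $w(\frakC^-)=1$, \propref{P:6.Theta} shows $v_p(A_\beta(\adsx_v))\geq\mu_p(\adsx_v)$, so non-vanishing mod $\frakm$ is impossible unless $\mu_p=0$), and then globalize by strong approximation. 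The paper compresses the existence of the required $(a,\beta)$ into a citation of Prop.~6.3 and Lemma~6.4 of \cite{Hsieh:Hecke_CM} and finishes by taking $u\equiv\beta\pmod p$, exactly as you do, so your reconstruction of the argument is structurally faithful.

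One imprecision is worth flagging in your account of how (N) enters. The epsilon-dichotomy of \lmref{L:key.V}, which you invoke to explain why residually self-dual characters would create a linked ``global quadratic residual relation,'' is proved under the hypothesis $\nads_v|_{F^\x}=\tau_{E/F}$ on the nose; it is the tool for the self-dual case of \thmref{T:main.V} and is simply not available when $\adsx$ is residually self-dual but not self-dual. The actual obstruction that (N) removes sits rather in the unramified Whittaker factors $\sum_{i=0}^{v(\bfc_v\beta)}\ads_{+,v}\Abs^{-1}(\uf_v)^i$ at inert $v\nmid\frakD$: residual self-duality forces these to reduce mod $\frakm$ to $\sum_i\tau_{\cK_v/\cF_v}(\uf_v)^i$, which vanishes whenever $v(\bfc_v\beta)$ is odd, and this parity constraint on the ideal $(\beta)$ is what can make strong approximation fail. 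Under (N) this mod-$\frakm$ degeneration does not hold, so the constraints at places outside $\frakC^-D_{\cK/\cF}$ are independent of those at $\frakC^-$ and the gluing goes through; that is the content of Prop.~6.3 of \cite{Hsieh:Hecke_CM} that your argument is tacitly reproducing.
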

\begin{proof} It follows from \cite[Prop.\,6.3 and Lemma 6.4]{Hsieh:Hecke_CM} (following an argument of Hida) that if $\adsx$ is not residually self-dual, then for some $a\in\cD_1$ we can find $\beta\in\cO_{\cF,\setp}^\x$ such that
\[\bfa_\beta^\setp(\adsx,\frakc(a)))\not\con 0\pmod{\frakm}\iff \bfa_\beta(\bbE^h_\adsx,\frakc(a))=\sum_{u\in\cU_p}\bfa_\beta(\Eadsu,\frakc(a))\not\con 0\pmod{\frakm}.\]
Hence, $v_p(\bfa_\beta(\Eadsu,\frakc(a)))=0$ for $u\con \beta\pmod{p}$. We conclude that $\Imu^-=0$ by \thmref{T:1.V}.
\end{proof}

\def\OKbasis{\bftheta}
\def\twobeta{\beta}
\def\Schar{\xi}
\section{Proof of Theorem A}\label{S:ThetaIntegral.V}
\subsection{}
In this subsection, we fix a place $v|\frakC^-$ and let $w$ be the place of $\cK$ above $v$. Let $E=\cK_v$ and $F=\cF_v$. Let $\nads_v:=\adsx_v\Abs_E^{-\onehalf}$ be a character of $E^\x$ such that $\nads_v|_{F^\x}=\tau_{E/F}$. Let $d_F$ be the generator of $\cD_\cF$ and $\delta=\delta_v=2d_F^{-1}\skewhf$ be the generator of the different $\cD_{E/F}$ fixed in \subsecref{S:different}.
\begin{lm}\label{L:key.V}Let $\beta\in F^\x$. If $A_\beta(\adsx_v)\not =0$, then
\[W(\nads_v)\tau_{E/F}(\beta)=\nads_v(2\skewhf).\]
\end{lm}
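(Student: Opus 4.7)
The strategy is to identify $A_\beta(\adsx_v)$ with the $\beta$-th Whittaker integral of a Siegel--Weil section in the degenerate principal series $I(0,\nads_v)$ of the quasi-split unitary group $U(1,1)$ attached to $E/F$, and then invoke the local theta dichotomy of \cite{Harris:Theta_dichotomy}. Self-duality of $\adsx$ forces $\nads_v = \adsx_v\abs{\cdot}_E^{-1/2}$ to be a unitary character of $E^\x$ satisfying $\nads_v|_{F^\x} = \tau_{E/F}$; in this setting $I(0,\nads_v)$ is reducible and decomposes as a direct sum $R^+ \oplus R^-$ of two irreducible summands, each realized as the image of the Siegel--Weil map from one of the two inequivalent one-dimensional Hermitian spaces over $E$.

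First, I would perform the change of variable $z = x + 2^{-1}\delta \in E$ in the definition of $A_\beta(\adsx_v)$. Since $c(\delta)=-\delta$, one has $\Tr_{E/F}(z)=2x$, so the integrand $\adsx_v^{-1}(z)\addchar(-d_F^{-1}\beta\cdot\Tr_{E/F}(z)/2)$ rewrites, using $\adsx_v = \nads_v\abs{\cdot}_E^{1/2}$, as the Godement--Siegel integrand attached to a suitable Schwartz function $\Phi_\beta\in\cS(E)$ on the one-dimensional Hermitian space $V_\beta := (E,\beta\cdot\rmN_{E/F})$. In this way $A_\beta(\adsx_v)$ becomes, up to an explicit nonzero constant depending on $\delta$, $d_F$, and the choice of Haar measure, the value $W_\beta(f_{\Phi_\beta,s},1)|_{s=0}$ of a Whittaker integral of a Siegel--Weil section of $I(0,\nads_v)$.

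Now I would invoke Harris's dichotomy \cite[\S 6, \S 8]{Harris:Theta_dichotomy}. On the one hand, the image of the Siegel--Weil map $\cS(V_\beta)\to I(0,\nads_v)$ lies entirely in the summand $R^{\varepsilon_1}$ whose label equals the Hermitian class of $V_\beta$, namely $\varepsilon_1 = \tau_{E/F}(\beta)$. On the other hand, the dichotomy result asserts that the $\beta$-th Whittaker functional on $I(0,\nads_v)$ is nontrivial only on a single summand $R^{\varepsilon_2}$, where the sign $\varepsilon_2$ is determined explicitly by the local root number $W(\nads_v)$ and the Weil constant $\nads_v(2\skewhf)$ coming from the splitting of the metaplectic cover via the trace-zero element $\skewhf$. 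The hypothesis $A_\beta(\adsx_v)\neq 0$ thus forces $\varepsilon_1 = \varepsilon_2$, which translates precisely into the asserted identity $W(\nads_v)\tau_{E/F}(\beta) = \nads_v(2\skewhf)$.

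The main technical obstacle is bookkeeping: one must reconcile the normalizations used here (the additive character $\addchar$, the self-dual Haar measure $dx$, the specific choice $\delta = 2d_F^{-1}\skewhf$, and the splitting character convention) with those in \cite{Harris:Theta_dichotomy}. In particular, verifying that the Weil constant distinguishing $R^+$ from $R^-$ is exactly $\nads_v(2\skewhf)$ amounts to a local $\ep$-factor manipulation at $s = 1/2$, which is where the trace-zero element $\skewhf$ enters the formula and gives the right-hand side its explicit form.
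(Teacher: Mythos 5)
Your high-level strategy is the same as the paper's: realize $A_\beta(\adsx_v)$ as a Whittaker integral of a Siegel--Weil section in a degenerate principal series of $U(1,1)$ attached to $E/F$, then invoke Harris's dichotomy. But the way you set up the bookkeeping has the $\beta$-dependence on the wrong side, and as written the argument is vacuous.

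The paper does not decompose $\bfI(\Schar,s)$ into $R^+\oplus R^-$ and it does not use a Schwartz function or Hermitian space that depends on $\beta$. It takes the \emph{fixed} section $\Phi_{\bfone}$ of Harris's (8.6) --- the unique one invariant under the diagonally embedded anisotropic $U(W)$, where $W$ is the fixed one-dimensional skew-Hermitian space $\pair{x}{y}_W=\delta x\bar{y}$ --- and identifies, via Harris's computation (8.14), $\cW_T(0)(\Phi_\bfone)=\Schar\Abs_E^{-1/2}(-2\Delta)\cdot A_\beta(\adsx_v)$ with $T=-\Delta\beta d_F^{-1}$. Crucially the section is independent of $\beta$; only the Whittaker index $T$ moves. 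Then the dichotomy enters through two facts of Harris: the functional equation $\cW_T(0)(M^*(0,\Schar)\Phi)=\tau_{E/F}(T)\cW_T(0)(\Phi)$ (where the $\beta$-dependence sits), and the fact that $\Phi_\bfone$ is an eigenvector with $M^*(0,\Schar)\Phi_{\bfone}=\Schar(-\delta)W(\Schar)\Phi_{\bfone}$ (Cor.\,8.3(ii), independent of $\beta$). Comparing gives $\tau_{E/F}(T)=\Schar(-\delta)W(\Schar)$ whenever $\cW_T(\Phi_\bfone)\neq 0$, which unwinds to the stated identity.

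In your version you let the Hermitian space be $V_\beta=(E,\beta\cdot\rmN_{E/F})$, so both the summand $R^{\varepsilon_1}$ in which $f_{\Phi_\beta}$ lives and the summand $R^{\varepsilon_2}$ supporting the $\beta$-th Whittaker functional depend on $\beta$ --- and in fact they depend in the same way, since $\beta$ is always represented by $V_\beta$ (take $z=1$). The condition $\varepsilon_1=\varepsilon_2$ is then automatic and constrains nothing; it does not yield a relation between $\tau_{E/F}(\beta)$ and $W(\nads_v)$. You also assert that $\varepsilon_2$ is ``determined explicitly by $W(\nads_v)$ and $\nads_v(2\skewhf)$'' independently of $\beta$, which is not how the dichotomy works: in the local theory the $T$-th Whittaker (Fourier--Jacobi) functional on $R(V^\pm)$ is nonzero precisely when $T$ is represented by $V^\pm$, so its support \emph{does} vary with $T$. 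The repair is exactly the paper's arrangement: keep the Hermitian space and the section fixed, move only the Whittaker index, and let the root number $W(\nads_v)$ and the factor $\nads_v(-\delta)$ appear through the eigenvalue of the normalized intertwining operator rather than through a labeling of summands. You would also need to actually carry out the identification of $A_\beta$ with a Whittaker integral up to a nonzero constant --- your change of variable $z=x+2^{-1}\delta$ is a sensible start, but the paper gets this from Harris's (8.14) and does not leave it as ``bookkeeping.''
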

\begin{proof}
The idea is to identify $A_\beta(\adsx)$ with the Whittaker integrals of a certain Siegel-Weil section on $U(1,1)$.
Let $W=E$ with the skew-Hermitian form $\pair{x}{y}_W=\delta x\ol{y}$.
Let $G=U(W)(F)$ be the associated unitary group and let $H=U(W+W^-)(F)$, where $W^-$ is the Hermitian space $(W,-\pairing_W)$. We let $\Schar=\nads_v$ and define the induced representation $\bfI(\Schar,s)$ of $H$ by
\[\bfI(\Schar,s)=\stt{\text{smooth }f:H\to\C\mid f(\MX{a}{b}{0}{\ol{a}^{-1}}h)=\Schar(a)\abs{a}_E^{s+\onehalf}f(h)}.\]
Let $\Delta=\delta^2\in F$ and let $T=-\Delta\beta d_F^{-1}$. For $f\in\bfI(\Schar,s)$, following \cite[(6.5) p.969]{Harris:Theta_dichotomy} we define the Whittaker integral by
\[\cW_T(s)(f):=\int_F f(\MX{0}{-1}{1}{0}\MX{1}{x}{0}{1})\psi(T x)dx.\]
We embed $G$ into $H$ by
\[g\mapsto i(g,1)=\onehalf\MX{g+1}{\frac{1}{2\delta}(g-1)}{2\delta(g-1)}{g+1}.\]
Let $\Phi_{\bfone}$ be the section $\Phi_\eta$ defined in \cite[p.989 (8.6)]{Harris:Theta_dichotomy} with $\eta=\bfone$ the trivial character.
Then $\Phi_{\bfone}$ is the unique function in $\bfI(\Schar,s)$ such that $\Phi_{\bfone}(1)=1$ and $\Phi_{\bfone}(h i(g,1))=\Phi_{\bfone}(h)$ for every $g\in G$.
Recall that
\[A_\beta(\adsx_v)=\int_{F^\x}\Schar^{-1}(x+2^{-1}\delta)\abs{x+\delta}_E^{-\onehalf}\psi(-\beta d_F^{-1}x)dx.\]
By the calculation in \cite[p.990 (8.14)]{Harris:Theta_dichotomy}, we find that
\beq\label{E:3.V}\begin{aligned}\cW_T(0)(\Phi_\bfone)&=\Schar(-1)\cdot\int_F\Schar^{-1}\Abs_E^{-s-\onehalf}(x+\frac{1}{2\delta})\psi(-\Delta \beta d_F^{-1} x)dx|_{s=0}\\
&=\Schar\Abs_E^{s-\onehalf}(-2\Delta)\int_F\Schar^{-1}\Abs_E^{-s-\onehalf}(x+2^{-1}\delta)\psi(-\beta d_F^{-1}x)dx|_{s=0}\\
&=\Schar\Abs_E^{-\onehalf}(-2\Delta)\cdot A_\beta(\adsx_v).\end{aligned}
\eeq
On the other hand, let $\chi^{-c}(z):=\chi^{-1}(\zbar)$ and let $M^*(s,\Schar):\bfI(s,\Schar)\to \bfI(-s,\Schar^{-c})$ be the normalized intertwining operator defined in \cite[(6.8)]{Harris:Theta_dichotomy}. By \cite[(6.10) and Cor.\,8.3 (ii)]{Harris:Theta_dichotomy}, we have
\begin{align*}
\cW_T(0)(M^*(0,\Schar)\Phi_{\bfone})&=\tau_{E/F}(T)\cdot \cW_T(0)(\Phi_{\bfone});\\
M^*(0,\Schar)\Phi_{\bfone}&=\Schar(-\delta)\cdot W(\Schar)\cdot \Phi_{\bfone}.
\end{align*}
Therefore, it follows from \eqref{E:3.V} that \[\tau_{E/F}(-\Delta\beta d_F^{-1})\cdot A_\beta(\adsx_v)=\Schar(-\delta)W(\Schar)\cdot A_\beta(\adsx_v),\] and hence
\[ A_\beta(\adsx_v)\not =0\imply \tau_{E/F}(\beta d_F)=\Schar(-\delta)W(\Schar)\iff W(\Schar)\tau_{E/F}(\beta)=\Schar(2\skewhf).\qedhere\]
\end{proof}
\begin{prop}\label{P:6.Theta}  Suppose $v$ is inert and $w(\frakC^-)=1$. We have $v_p(A_\beta(\adsx_v))\geq \mu_p(\adsx_v)$ for all $\beta\in F^\x$. In addition, there exists $\Beth_v\in \uf^{-1}\cO_F^\x$ such that $v_p(A_{\Beth_v}(\adsx_v))=\mu_p(\adsx_v)$.
\end{prop}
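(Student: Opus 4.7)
The plan is: reduce $A_\beta(\adsx_v)$ to a finite character-sum expression by exploiting $a(\adsx_v)=w(\frakC^-)=1$, derive $v_p(A_\beta)\ge\mu_p(\adsx_v)$ from a direct "$1+\mathrm{error}$" split, and finally produce $\Beth_v$ by Fourier inversion on $\F_q$, whose sole non-trivial input is a non-constancy statement for a function on $\F_q$.

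First, since $v$ is inert with $a(\adsx_v)=1$, $\adsx_v$ is trivial on $1+\uf\cO_E$ and $\delta\in\cO_E^\x$. Splitting $\cF_v=\cO_F\sqcup(\cF_v\setminus\cO_F)$, the integrand $\adsx_v^{-1}(x+\delta/2)\psi^\circ(\beta x)$ is locally constant modulo $\uf\cO_F$: for $x\in\cO_F$, $x+\delta/2\in\cO_E^\x$ (since $\overline{a+\delta/2}=\bar a +\bar\delta/2\notin\F_q$) and a $\uf\cO_F$-perturbation lands in $1+\uf\cO_E$; for $v(x)\le-1$, $1+\delta/(2x)\in 1+\uf\cO_E$ gives $\adsx_v^{-1}(x+\delta/2)=\adsx_v^{-1}(x)$. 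Integrating against $\psi^\circ(\beta\cdot)$ (which has conductor $\cO_F$) then yields $A_\beta(\adsx_v)=0$ for $v(\beta)\le-2$, while for $v(\beta)=-1$
\[
A_\beta(\adsx_v)=q^{-1}\sum_{a\in\cO_F/\uf}\adsx_v^{-1}(a+\delta/2)\,\psi^\circ(\beta a),
\]
with an analogous finite sum (carrying tail terms for $n=1,\ldots,v(\beta)+1$) when $v(\beta)\ge 0$. In each case $A_\beta$ is a finite $\Z[q^{-1},\adsx_v(\uf)^{\pm 1}]$-linear combination of values of $\adsx_v^{-1}$ weighted by roots of unity of order coprime to $p$.

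For the lower bound write $\adsx_v^{-1}(y)=1+\xi(y)$ with $\xi(y)\in\frakm_p^\mu$, where $\mu:=\mu_p(\adsx_v)$. The "$1$" contribution collapses termwise to $\int_{\cF_v}\psi^\circ(\beta x)\,dx=0$ for $\beta\ne 0$, while the "$\xi$" contribution is a finite sum of elements of $\frakm_p^\mu$ against $p$-unit coefficients, so $v_p(A_\beta)\ge\mu$ for every $\beta\in\cF_v^\x$. For the equality, fix $\varpi\in\Qbarp^\x$ with $v_p(\varpi)=\mu$ and set $\tilde\xi(a):=\xi(a+\delta/2)/\varpi\in\Zbarp$. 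For $v(\beta)=-1$, $\sum_{a\in\F_q}\psi^\circ(\beta a)=0$ reduces the formula to $qA_\beta/\varpi=\hat{\tilde\xi}(\beta)$, the $\F_q$-Fourier transform of $\tilde\xi$ at $\beta$. As $\beta$ runs over $\uf^{-1}\cO_F^\x$ the characters $\psi^\circ(\beta\cdot)$ sweep out all non-trivial additive characters of $\F_q$, so the required equality $v_p(A_{\Beth_v})=\mu$ for some $\Beth_v$ is equivalent to $\overline{\tilde\xi}\colon\F_q\to\Fpbar$ being non-constant.

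The main obstacle is therefore to show $\overline{\tilde\xi}$ is not constant. Were it constant, one would have $\adsx_v|_H\equiv 1\pmod{\frakm_p^{>\mu}}$ on the subgroup $H\subset\F_{q^2}^\x=(\cO_E/\uf)^\x$ generated by the ratios $(a+\delta/2)/(a'+\delta/2)$. The norm $N(1+2a/\delta)=1-4a^2/D$ (where $D=\delta^2\in\cO_F^\x$ is a non-square modulo $\uf$, since $\delta\notin\F_q$) attains $(q+1)/2$ distinct values in $\F_q^\x$---more than the order of any proper subgroup of $\F_q^\x$---so $N(H)=\F_q^\x$ and $[\F_{q^2}^\x:H]$ divides $q+1$. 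Combining this with the cyclic structure of $\F_{q^2}^\x$ and the fact that, when $\mu>0$, the image of $\adsx_v|_{\cO_E^\x}$ consists of $p$-power roots of unity, a case analysis on whether the relevant $p$-part of that image divides $q-1$ or $q+1$ forces $\adsx_v|_H$ to already realise the full image of $\adsx_v|_{\cO_E^\x}$, contradicting $\mu=\mu_p(\adsx_v)$. The case $\mu=0$ is handled directly by \cite[Lemma 6.4]{Hsieh:Hecke_CM}.
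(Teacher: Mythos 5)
Your overall plan matches the paper's: reduce $A_\beta(\adsx_v)$ to a finite character sum using $a(\adsx_v)=1$, obtain the lower bound $v_p(A_\beta)\ge\mu$, and produce $\Beth_v$ by Fourier inversion over $\F_q$. The ``$1+\xi$'' split is a clean way to organize the lower bound, essentially repackaging the explicit formula from \cite[Prop.\,4.5]{Hsieh:Hecke_CM} that the paper invokes directly, and the Fourier-inversion step is the same as the paper's.

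The genuine gap is in the non-constancy argument for $\overline{\tilde\xi}$. Your norm computation only gives $N(H)=\F_q^\x$, hence (since $\F_{q^2}^\x$ is cyclic and $q-1\mid |H|$) $\F_q^\x\subset H$ and $[\F_{q^2}^\x:H]\mid q+1$. But this does \emph{not} rule out $p\mid[\F_{q^2}^\x:H]$, and if $p\mid[\F_{q^2}^\x:H]$ then $\adsx_v(H)$ can be a proper subgroup of the image of $\adsx_v|_{\cO_E^\x}$. The advertised ``case analysis on whether the $p$-part of the image divides $q-1$ or $q+1$'' does not close this: the image is automatically trivial on $\F_q^\x$ (since $\adsx_v|_{\cO_F^\x}=1$ for a self-dual character at an inert place), so it always divides $q+1$, and no such dichotomy is available. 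What is actually needed is $p\nmid[\F_{q^2}^\x:H]$, which your argument does not establish.

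In fact the clean statement is $H=\F_{q^2}^\x$, and it is elementary once one uses the full generating set rather than just the norms of $1+2a/\delta$. The image of $(a+\delta/2)/(a'+\delta/2)$ in $\F_{q^2}^\x/\F_q^\x\cong\Z/(q+1)$ is the difference $\ell_a-\ell_{a'}$ of the labels, and $\{\ell_a:a\in\F_q\}$ is exactly $\Z/(q+1)\setminus\{0\}$; since $q+1\ge 3$, every nonzero element of $\Z/(q+1)$ is such a difference, so the subgroup $H/\F_q^\x$ is all of $\Z/(q+1)$. Alternatively (this is what makes the paper's argument work, though it is left implicit there), the orthogonality relation $\sum_{a\in\F_q}\adsx_v(a+\delta/2)=-1$ --- the $q+1$ cosets of $\F_q^\x$ are represented by $1$ and $a+\delta/2$, $a\in\F_q$ --- gives $\sum_{a}\tilde\xi(a)=-(q+1)/\varpi$ up to a unit, and since $\mu>0$ forces $p\mid q+1$ with $v_p(q+1)\ge 1 > \mu=1/(p^{r-1}(p-1))$ for $p>2$, this vanishes modulo $\frakm_p$; hence $c_0(\overline{\tilde\xi})=0$, and the nonvanishing Fourier coefficient occurs at some $\gamma\ne 0$. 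Either route replaces the vague ``case analysis'' and completes the proof.
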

\begin{proof}
Let $\stt{1,\OKbasis}=\stt{1,\bftheta_v}$ be the $\OFv$-basis of $\OKv$ fixed in \subsecref{S:different} such that $\UF=2\OKbasis$ if $v\ndivides 2$ and $\UF=\OKbasis-\ol{\OKbasis}$ if $v|2$.
Let $t=t_v=\OKbasis+\ol{\OKbasis}$ and $\addchar^\circ(x):=\addchar(-d_F^{-1}x)$. For brevity, we drop the subscript and simply write $\adsx$ for $\adsx_v$.
Since $\adsx$ is self-dual, $\adsx|_{\cO_F^\x}=1$ and $\nads(\uf)=-1$. By \cite[Prop.\,4.5 (1-3)]{Hsieh:Hecke_CM}, the formula of $A_\beta(\adsx)$ is given as follows.
\begin{mylist}
\item
If $v(\twobeta)\geq 0$ and $v(\twobeta)\not\con 0\pmod{2}$, then
\[A_\beta(\adsx)=\addchar^\circ(2^{-1}t\beta)\cdot (-1)^{v(2)+1}(1+\abs{\uf}).\]
\item If either $v(\twobeta)<-1$ or $v(\twobeta)\con 0\pmod{2}$, then $A_\beta(\adsx)=0$.
\item If $v(\twobeta)=-1$, then
\[A_\beta(\adsx)=\addchar^\circ(2^{-1}t\beta)\cdot \abs{\uf}\sum_{x\in \bfk_F}\adsx^{-1}(x+\OKbasis)\addchar^\circ(\beta x).\]
\end{mylist}
It follows immediately that $v_p(A_\beta(\adsx))\geq v_p(1+\abs{\uf}^{-1})$ if $v(\twobeta)\not =-1$, and $v_p(A_\beta(\adsx))\geq \mu_p(\adsx)$ if $v(\twobeta)=-1$. On the other hand, note that $p$ divides $1+\abs{\uf}^{-1}$ if $\mu_p(\adsx)>0$. Thus $v_p(A_\beta(\adsx))\geq \mu_p(\adsx)$ for every $\beta\in F^\x$.

We proceed to prove the second assertion. Choose a sufficiently large finite extension $L$ of $\Qp$ so that $\adsx$ and $A_\beta(\adsx)$ for $\beta\in \uf^{-1}\cO^\x_F$ take value in $L$.
Let $e_L=v_L(p)$ and let \[m=\inf_{x\in\cO_F}v_L(\adsx(x+\OKbasis)-1)=e_L^{-1}\mu_p(\adsx).\] We define the function $f:\bfk_F\to\bfk_L\subset\Fpbar$ by
\[f(\ol{x})=\uf_L^{-m}(\adsx(x+\OKbasis)-1)\pmod{(\uf_L)}.\] For $\gamma\in \cO_F$, define $\addchar_{\ol{\gamma}}:\bfk_F\to \Fpbar$ by $\addchar_{\ol{\gamma}}(x)=\addchar^\circ(\frac{\gamma}{\uf}x)\pmod{\frakm}$.
Then $\stt{\addchar_{\ol{\gamma}}}_{\ol{\gamma}\in \bfk_F}$ gives a $\Fpbar$-basis of the space of $\Fpbar$-valued functions on $\bfk_F$. Then $f$ can be uniquely written as
$f(x)=\sum_{\ol{\gamma}\in\bfk_F}c_\gamma(f)\addchar_{\ol{\gamma}}(x)$, where $c_\gamma(f)$ is the $\ol{\gamma}$-coefficient of $f$ given by
\begin{align*}c_\gamma(f)&=\abs{\uf}\sum_{x\in\bfk_F}f(x)\psi_{\ol{\gamma}}(x)=\uf_L^{-m}\abs{\uf}\sum_{x\in\bfk_F}\adsx^{-1}(x+\OKbasis)\psi^\circ(\frac{\gamma}{\uf}x)\\
&=\uf_L^{-m}\addchar^\circ(-2^{-1}t\beta)\cdot A_{\gamma\uf^{-1}}(\adsx)\pmod{\frakm_L}.\end{align*}
Since $f$ is a non-zero function by definition, some $\ol{\gamma}$-coefficient of $f$ is nonzero, namely  $c_{\gamma}(f)\not\con 0\pmod{\frakm_L}$.
Let $\Beth:=\gamma \uf^{-1}$. Then $v_L(A_{\Beth}(\adsx))=m$ and hence $v_p(A_{\Beth}(\adsx))=\mu_p(\adsx)$.
\end{proof}

The following proposition is the key ingredient in our proof.
\begin{prop}\label{P:NVAbeta.V}
There exists $\Beth_v\in F^\x$ such that
\begin{itemize}
\item[(i)] $v_p(A_{\Beth_v}(\adsx_v))=\mu_p(\adsx_v)$,
\item[(ii)] $W(\nads_v)\tau_{E/F}(\Beth_v)=\nads_v(2\skewhf)$.
\end{itemize}
\end{prop}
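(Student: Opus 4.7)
The plan is to reduce the entire statement to part~(i), since part~(ii) is an immediate corollary of \lmref{L:key.V}: once $v_p(A_{\Beth_v}(\adsx_v))=\mu_p(\adsx_v)<\infty$, we have $A_{\Beth_v}(\adsx_v)\neq 0$, and \lmref{L:key.V} directly yields the epsilon dichotomy $W(\nads_v)\tau_{E/F}(\Beth_v)=\nads_v(2\skewhf)$. Finiteness of $\mu_p(\adsx_v)$ at $v\mid\frakC^-$ is automatic, since $\adsx_v$ is ramified. Thus the real task is to exhibit a single $\Beth_v\in F^\x$ realising the infimum $\mu_p(\adsx_v)$.

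The first step towards~(i) is the uniform lower bound $v_p(A_\beta(\adsx_v))\geq \mu_p(\adsx_v)$ for every $\beta\in F^\x$. After observing that $\adsx_v^{-1}(x+2^{-1}\delta)$ is compactly supported on $\uf^{-N}\cO_F$ for a suitable $N$ determined by the conductor, and factors through a finite quotient thereof, one rewrites
\[
A_\beta(\adsx_v)=\int_{\cF_v}\bigl(\adsx_v^{-1}(x+2^{-1}\delta)-1\bigr)\,\addchar^\circ(\beta x)\,dx,
\]
since the ``constant'' integral $\int\addchar^\circ(\beta x)\,dx$ on the support vanishes for $\beta\neq 0$ by orthogonality of additive characters. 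The integrand has $p$-adic valuation uniformly at least $\mu_p(\adsx_v)$, so the bound follows.

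To produce $\Beth_v$ that achieves equality, I would follow the Fourier-inversion argument in the proof of \propref{P:6.Theta}. Let $L/\Qp$ be a sufficiently large finite extension containing the values of $\adsx_v$, let $e_L=v_L(p)$, set $m=e_L^{-1}\mu_p(\adsx_v)$, and regard
\[
f(x):=\uf_L^{-m}\bigl(\adsx_v^{-1}(x+2^{-1}\delta)-1\bigr)\pmod{\uf_L}
\]
as a nonzero $\Fpbar$-valued function on the appropriate finite quotient of $\uf^{-N}\cO_F$. Expanding $f$ in the basis of additive characters $\addchar_{\bar\gamma}(x):=\addchar^\circ(\gamma\uf^{-N}x)\pmod{\frakm}$ and identifying each Fourier coefficient $c_\gamma(f)$ with a unit multiple of $\uf_L^{-m}A_{\gamma\uf^{-N}}(\adsx_v)\pmod{\frakm_L}$ (the unit coming from the phase $\addchar^\circ(-2^{-1}t_v\beta)$ appearing in \propref{P:Fourier.M}), the non-vanishing of $f$ forces some $c_\gamma(f)\not\equiv 0$, and $\Beth_v:=\gamma\uf^{-N}$ then has valuation exactly $\mu_p(\adsx_v)$.

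The main obstacle is the careful local case analysis needed to make this argument uniform across all cases covered by the proposition: inert and ramified $v$, arbitrary conductor exponent $w(\frakC^-)$, and the dyadic case $v\mid 2$. When $v$ is ramified, $\delta$ is a uniformizer rather than a unit, so the shift $x\mapsto x+2^{-1}\delta$ does not preserve the filtration $\uf^k\cO_F$, and the correct exponent $N$ and domain on which $f$ is defined must be chosen accordingly. For $v\mid 2$, the choice $\delta=\OKbasis-\ol{\OKbasis}$ from \subsecref{S:different} brings additional subtleties. One must also ensure that the maximising Fourier coefficient corresponds to some $\gamma\neq 0$, so that $\Beth_v\in F^\x$ rather than merely $F$; this is handled by the same orthogonality argument used for the lower bound, which kills the $\gamma=0$ contribution.
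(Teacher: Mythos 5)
Your reduction of part~(ii) to part~(i) via \lmref{L:key.V} is exactly the paper's first move and is correct. The problem is with part~(i): you propose to handle it uniformly by extending the Fourier-inversion argument of \propref{P:6.Theta} to every local case, and you explicitly flag the resulting ``careful local case analysis'' as the main obstacle without carrying it out. That is not what the paper does, and it leaves the proposition unproved.

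The simplification you miss is the following observation, which collapses all cases not covered by \propref{P:6.Theta}: if $v$ is ramified in $\cK/\cF$, or if $w(\frakC^-)>1$, then $\mu_p(\adsx_v)=0$ \emph{automatically}, because $v\nmid p$ and $p>2$. Concretely, when $w(\frakC^-)>1$ the character $\adsx_v$ is nontrivial on $1+\frakp_w$, which is a pro-$\ell$ group with $\ell\ne p$, so $\adsx_v$ takes some $\ell$-power root of unity value, and such a value is $\not\equiv 1\pmod{\frakm}$; when $v$ is ramified, self-duality forces $\adsx_v$ to restrict to $\tau_{E/F}\Abs_F$ on $F^\times$, and the nontrivial quadratic character $\tau_{E/F}$ takes the value $-1\not\equiv 1\pmod{\frakm}$ since $p>2$. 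Once $\mu_p(\adsx_v)=0$, part~(i) is just the non-vanishing of some $A_{\Beth_v}(\adsx_v)$ modulo $\frakm$, which is precisely \cite[Lemma~6.4]{Hsieh:Hecke_CM} (already invoked in \corref{C:2.V}). So the paper's proof is a two-line split: cite \propref{P:6.Theta} when $v$ is inert with $w(\frakC^-)=1$, and cite \cite[Lemma~6.4]{Hsieh:Hecke_CM} otherwise, the latter being sufficient because there $\mu_p(\adsx_v)=0$.

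Two further inaccuracies in your sketch. First, $\adsx_v^{-1}(x+2^{-1}\delta)$ is \emph{not} compactly supported on $F$; the integral $A_\beta$ is a regularized (analytic-continuation) integral, and the lower bound $v_p(A_\beta)\ge\mu_p(\adsx_v)$ is obtained not from compact support but from the explicit finite-sum representation of $A_\beta$ in \remref{R:2.V}, together with orthogonality of additive characters to kill the ``constant'' $+1$ contribution. Second, even granting the lower bound, your proposed uniform Fourier inversion has to contend with the fact that the locally constant function $x\mapsto\adsx_v^{-1}(x+2^{-1}\delta)$ does not factor through a single finite quotient of $F$ independent of the region, and when $v$ is ramified the element $\delta$ is a uniformizer of $E$, so the constancy radius varies with $v_F(x)$; none of this is resolved in the sketch. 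The paper avoids all of it by the $\mu_p=0$ observation.
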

\begin{proof}
When $w(\frakC^-)=1$ and $v$ is inert, (i) is verified in \propref{P:6.Theta}. Suppose that either $w(\frakC^-)>1$ or $v$ is ramified. Then we must have $\mu_p(\adsx_v)=0$ as $v\ndivides p$ and $p>2$. By \cite[Lemma 6.4]{Hsieh:Hecke_CM}, there exists $\Beth_v\in F^\x$ such that $A_{\Beth_v}(\adsx_v)\not\con 0\pmod{\frakm}$. Thus $v_p(A_{\Beth_v}(\adsx_v))=\mu_p(\adsx_v)=0$. To show the epsilon dichotomy property (ii) for this $\Beth_v$, we
note that (i) implies that $A_{\Beth_v}(\adsx_v)\not =0$ ($\adsx_v$ is ramified), and (ii) follows from \lmref{L:key.V}.
\end{proof}
\begin{Remark}In virtue of \cite[Prop.\,6.7]{Hsieh:Hecke_CM}, \propref{P:NVAbeta.V} removes the assumption (C) in \cite[Thm.\,6.8]{Hsieh:Hecke_CM}.
\end{Remark}
\subsection{}
Now we are ready to prove our main theorem.
\begin{thm}\label{T:main.V}Suppose that $p\ndivide D_{\cF}$. Let $\adsx$ be a self-dual Hecke character of $\cK^\x$ such that \HypMu
Then
\[\Imu^-=\sum_{v|\frakC^-}\mu_p(\adsx_v).\]
\end{thm}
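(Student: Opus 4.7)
The plan is to combine Hida's formula in \thmref{T:1.V} with the explicit factorization of the Fourier coefficients $\bfa_\beta(\Eadsu,\frakc(a))$ into local pieces, and then to reduce the whole problem to a local-global question about the Whittaker integrals $A_\beta(\adsx_v)$ at places $v\smid\frakC^-$. By \thmref{T:1.V},
\[
\Imu^-=\inf_{\substack{(u,a)\in\cD_0\times\cD_1\\ \beta\in\cF_+}}v_p(\bfa_\beta(\Eadsu,\frakc(a))),
\]
so everything is reduced to computing these $p$-adic valuations. Writing the Fourier coefficient as in \eqref{E:fourierformula.E} combined with \eqref{E:FA.V}, the only factors that can contribute to the $p$-adic valuation are the local Whittaker integrals $A_\beta(\adsx_v)$ at $v\smid\frakC^-$; all other factors are either zero (which we exclude by choosing $\beta$ appropriately) or $p$-adic units (since the values of $\adsx$ lie in $\bR=\cO_{L,\setp}^\x$ and all relevant Euler factors at unramified split places are units by Remark \ref{R:2.V}).

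Hence the problem splits into two inequalities. For the \emph{lower bound}, at each $v\smid\frakC^-$ we have $v_p(A_\beta(\adsx_v))\geq\mu_p(\adsx_v)$: this is \propref{P:6.Theta} in the inert unramified case, and is trivial ($\mu_p(\adsx_v)=0$ since $v\ndivide p$ and $p>2$) in the remaining ramified/higher-conductor cases in view of \propref{P:NVAbeta.V}. Summing over $v\smid\frakC^-$ gives $\Imu^-\geq\sum_{v\smid\frakC^-}\mu_p(\adsx_v)$.

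For the matching \emph{upper bound}, the task is to exhibit a single global $\beta\in\cF_+$ together with some $(u,a)\in\cD_0\times\cD_1$ for which equality $v_p(A_\beta(\adsx_v))=\mu_p(\adsx_v)$ holds simultaneously at every $v\smid\frakC^-$, while $\beta$ also lies in $u(1+\uf_p\OFp)$ and in $\OF_v^\x$ at $v\smid\Csplit$, and has non-vanishing contributions at the remaining unramified places. At each $v\smid\frakC^-$, \propref{P:NVAbeta.V} supplies a local $\Beth_v\in\cF_v^\x$ achieving the optimal valuation $\mu_p(\adsx_v)$ and satisfying the epsilon dichotomy $W(\nads_v)\tau_{E/F,v}(\Beth_v)=\nads_v(2\skewhf)$. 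I would then invoke weak approximation in $\cF$: since the conditions imposed on $\beta$ at $v\smid\frakC^-$, at $v\smid p\Csplit\Csplit^c$, and at the archimedean places (total positivity) are all open, one can find $\beta\in\cF_+$ arbitrarily close at these places to the preassigned local data, and the value of $A_\beta(\adsx_v)$ depends only on a sufficiently fine local congruence class of $\beta$ (controlled by the conductor of $\adsx_v$). Such a $\beta$ then gives a Fourier coefficient of valuation exactly $\sum_{v\smid\frakC^-}\mu_p(\adsx_v)$ after choosing $u\in\cD_0$ matching $\beta$ modulo torsion.

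The main obstacle is to ensure that the system of local conditions on $\beta$ is globally realizable, and this is where the global root number hypothesis (R) enters. The constraints $\tau_{E/F,v}(\beta)=W(\nads_v)^{-1}\nads_v(2\skewhf)$ at $v\smid\frakC^-$ are cut out by a quadratic character, and for a global $\beta\in\cF_+$ the product formula for $\tau_{\cK/\cF}$, together with the fact that $\tau_{E/F,v}(\beta)=1$ at archimedean places (since $\beta$ is totally positive) and at the split places $v\smid\Csplit\Csplit^c p$, forces the product of the prescribed local values over $v\smid\frakC^-D_{\cK/\cF}$ to equal $1$. Using self-duality $\adsx|_{\AF^\x}=\tau_{\cK/\cF}\Abs_{\AF}$ to factor $W(\nads)$ as a product of local root numbers, this compatibility is precisely the statement $W(\nads)=+1$ from hypothesis (R). Once this global compatibility is verified, weak approximation produces the required $\beta$, completing the proof.
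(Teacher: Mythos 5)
Your proposal takes essentially the same route as the paper: the lower bound comes from \propref{P:6.Theta} (plus the observation that $\mu_p(\adsx_v)=0$ automatically at the ramified or higher-conductor places), and the upper bound comes from picking locally optimal $\Beth_v$ via \propref{P:NVAbeta.V} and then globalizing, with the root number hypothesis (R) providing exactly the parity obstruction that must vanish. The one place where you compress more than the paper does is the globalization step: you invoke ``weak approximation'' to produce $\beta$, but weak approximation at the finitely many places in $\frakC^-D_{\cK/\cF}\cdot p\Csplit\Csplit^c\cdot\infty$ does not by itself control the valuations of $\beta$ at the infinitely many other inert places, where the unramified Whittaker integral $\sum_{i=0}^{v(\bfc_v\beta)}(-1)^i$ vanishes whenever $v(\bfc_v\beta)$ is odd. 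What actually makes this work is that one is free to choose $\bfc$ (equivalently, the narrow ray class $a\in\cD_1$) as a function of $\beta$: the paper sets $\bfc_v=\beta^{-1}$ at $v\nmid p\frakC\frakC^c$ so that all these sums collapse to $1$, and the nontrivial residual condition is that the resulting $\frakc$ be a norm from $\cK$ (the ideal $\fraka$ in condition (3)). That norm condition is where the product formula for $\tau_{\cK/\cF}$ and $W(\nads)=+1$ are used; the paper delegates this verification to \cite[Prop.~6.7]{Hsieh:Hecke_CM}. So your identification of the role of (R) is correct, but the ``weak approximation'' phrasing hides that one must also choose $a$ compatibly with $\beta$, which is the real content of the globalization.

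One further small imprecision in your lower-bound paragraph: the factors $\sum_{i=0}^{v(\bfc_v\beta)}\adsx_{+,v}\Abs^{-1}(\uf_v^i)$ at split unramified $v$ are not in general $p$-adic units (for $k\geq 1$ the geometric-series sums can acquire $p$-divisibility), but since any extra $p$-divisibility only improves the lower bound, this does not affect the inequality $\Imu^-\geq\sum_v\mu_p(\adsx_v)$; it just means the infimum in \thmref{T:1.V} must be attained with a careful choice of $(\beta,a)$, which is exactly what the upper bound construction does.
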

\begin{proof}
In view of \eqref{E:FA.V} and \propref{P:6.Theta}, we find that \[v_p(\bfa_\beta^\setp(\adsx,\frakc(a)))\geq \sum\limits_{v|\frakC^-}\mu_p(\adsx_v)\text{ for all }\beta\in\cF_+\text{ and }a\in\cD_1.\]
Combined with the formula \eqref{E:fourierformula.E} of $\bfa_\beta(\Eadsu,\frakc(a))$ and \thmref{T:1.V}, this implies that
\beq\label{E:10.V}\Imu^-\geq \sum_{v|\frakC^-}\mu_p(\adsx_v).\eeq

For each $v|\frakC^-$, we let $\Beth_v$ be as in \propref{P:NVAbeta.V}. Then
$v_p(A_{\Beth_v}(\adsx_v))=\mu_p(\adsx_v)$ and $W(\nads_v)\qchKF(\Beth_v)=\nads_v(2\skewhf)$ for every $v|\frakC^-$.
From the assumption that $W(\nads)=\prod_v W(\nads_v)=1$ we can deduce that there exists $\beta\in\cF_+$ such that
\begin{enumerate}
\item $\beta\in\cO_{\cF,(p\Csplit\Csplit^c)}^\x$,
\item $A_\beta(\adsx_v)=A_{\Beth_v}(\adsx_v)$ for every $v|\frakC^-$,
\item $\prod_{\frakq|\frakC^-}\frakq^{v_\frakq(\beta)}=(\beta)\frakc(\OK)\rmN_{\cK/\cF}(\fraka)$ for some prime-to-$p\frakC$ ideal $\fraka$ of $\OK$.
\end{enumerate}
(\cf \cite[Prop.\,6.7]{Hsieh:Hecke_CM}.) Let $\bfc\in\AFf^\x$ be the idele such that $\bfc_v=\beta^{-1}$ for all $v\ndivide p\frakC\frakC^c$ and $\bfc_v=1$ if $v|p\frakC\frakC^c$. Then $\frakc(\fraka):=\il_\cF(\bfc)=\frakc(\OK)\rmN_{\cK/\cF}(\fraka)$ is the ideal corresponding to $\bfc$. Let $u\in\cU_p$ such that $u\con\beta\pmod{p}$. By \eqref{E:FA.V} and \eqref{E:fourierformula.E}, we find that
\beq\label{E:11.V}v_p(\bfa_\beta(\Eadsu,\frakc(\fraka)))=\sum_{v|\frakC^-}v_p(A_\beta(\adsx_v))=\sum_{v|\frakC^-}v_p(A_{\Beth_v}(\adsx_v))=\sum_{v|\frakC^-}\mu_p(\adsx_v).\eeq
Combining \thmref{T:1.V}, \eqref{E:10.V} with \eqref{E:11.V}, we obtain
\[\Imu^-=\sum_{v|\frakC^-}\mu_p(\adsx_v).\qedhere\]
\end{proof}

\bibliographystyle{amsalpha}
\bibliography{C:/users/MingLun/texsetting/mybib}
\end{document}